\newtheorem{theorem}{Theorem}[section]
\newtheorem{lemma}[theorem]{Lemma}
\newtheorem{remark}{Remark}[section]
\newtheorem{definition}[theorem]{Definition}
\begin{document}
	\title[Hardy- Sobolev type inequalities]{Existence and Nonexistence breaking results for a weighted elliptic
problem in half-space}

\author[J. M.\ do \'O]{J. M. do \'O}
\address[Jo\~{a}o Marcos do \'O]{Dep. Mathematics, Federal University of Para\'{\i}ba
		\newline\indent
		58051-900, Jo\~ao Pessoa-PB, Brazil}
	\email{\href{mailto:jmbo@mat.ufpb.br}{jmbo@mat.ufpb.br}}
	
\author[R. F. Freire]{R. F. Freire}
\address[R. F. Freire]{Université de Pau et des Pays de l’Adour, 
Laboratoire de Mathématiques et de leurs Applications (LMAP), 
Bâtiment IPRA, Avenue de l’Université, 64013 Pau, France}
\email{\href{mailto:rdffreire@univ-pau.fr}{rdffreire@univ-pau.fr}}

\author[Giacomoni]{J. Giacomoni}
\address{Universit\'{e} de Pau et des Pays de l'Adour, Laboratoire de math\'{e}matiques et de leurs applications (LMAP), Bat. IPRA, Avenue de l’Université, 64013 Pau, France}
\email{\href{mailto: jacques.giacomoni@univ-pau.fr}{jacques.giacomoni@univ-pau.fr}}

\author[Medeiros]{E. S. Medeiros}
	\address{Universidade Federal da Para\'iba, Departamento de Matem\'atica, 58051-900 Jo\~{a}o Pessoa-PB, Brazil}
	\email{\href{mailto: everaldo@mat.ufpb.br}{everaldo@mat.ufpb.br}}
		
	\subjclass{35J55, 35J50, 37K05}
	\keywords{Existence of solutions, Regularity, Hardy type inequality, Pohozaev identity}

	\begin{abstract}  
		In this paper we study the problem $-\mathrm{div}(\rho(x_N)\nabla u)=a|u|^{p-2}u$ in $\mathds{R}^N_+$, $-\partial u/\partial x_N=b|u|^{q-2}u$ in $\mathds{R}^{N-1}$ where $a,b \in \mathds{R}$, $p,q\in (1,\infty)$ and $\rho$ is a positive weight. We establish regularity results for weak solutions and, using a variational approach combined with a new Pohozaev-type identity, we show that the introduction of the weighted operator $-\mathrm{div}(\rho(x_N)\nabla u)$ can reverse the known solvability behavior of the classical Laplacian case. Specifically, we identify regimes where the problem admits solutions despite nonexistence for the corresponding case with $-\Delta$, and vice versa, thus inverting the classical existence and nonexistence results.
	\end{abstract}

	\maketitle
	
	
	

%
%
\section{Introduction and main results}

Let $N \geq 3$ and denote by 
$\mathds{R}^N_+ := \{x=(x',x_N) \in \mathds{R}^N : x' \in \mathds{R}^{N-1}, \; x_N > 0\}$ 
the upper half-space, whose boundary is $\partial \mathds{R}^N_+ = \mathds{R}^{N-1}$.
 In this work, we are interested in investigating the regularity, existence and nonexistence of solutions to the nonlinear elliptic boundary  value problem
  \begin{equation}\label{PG}
		\left\{
		\begin{aligned}
			-\mathrm{div}(\rho(x_N) \nabla u) &=a|u|^{p-2}u &\mbox{in }&\ \mathds{R}^N_+,
			\vspace{0.2cm}\\
			-\frac{\partial u}{\partial x_N}&=b|u|^{q-2}u&\mbox{on }&\
			\mathds{R}^{N-1},
		\end{aligned}
		\right. 
		\tag{$\mathcal{P}_0$}
	\end{equation}
 where $a$ and $b$ are real constants, and the weight function $\rho$ satisfies some mild hypotheses.

For the weight function $\rho : [0,\infty) \to [0,\infty)$, we assume that $\rho \in L^1_{\mathrm{loc}}(0,\infty)$ and impose the following basic hypothesis:
\begin{enumerate}
    \item [$(\rho_0)$] There exists a constant $0\leq\gamma$ such that 
    $$
    (1+s)^\gamma\leq \rho(s),\quad s\geq0.
    $$ 
\end{enumerate}
Equations in the form $-\mathrm{div}(\rho(x_N)\nabla u)=f(u)$ naturally emerge in a variety of applied contexts, including astrophysics and population genetics (see \cite{Aronson, Chandra, Li-Ni}). In particular, the presence of a spatially dependent weight in the divergence operator models diffusion processes with non-uniform diffusion coefficients (see \cite{Lina}).

By a classical solution to \eqref{PG} we mean a function 
$
u \in C^2(\mathds{R}^N_+)\cap C^1(\overline{\mathds{R}^N_+})
$
that satisfies \eqref{PG} pointwise in $\mathds{R}^N_+$ and on its boundary $\mathds{R}^{N-1}$.
We denote the critical Sobolev exponent by $2^\ast = 2N/(N-2)$ and the critical exponent for trace embedding by $2_\ast = 2(N-1)/(N-2)$.

\begin{remark}
In \cite{YanYan-Lei}, the authors proved that under the conditions  
\[
0 \leq p \leq 2^\ast, \quad -\infty < q \leq 2_\ast, \quad p+q < 2^\ast + 2_\ast,
\]  
the problem
\begin{equation*}
\left\{
\begin{aligned}
   -\Delta u &= a |u|^{p-2}u &\quad &\text{in } \mathds{R}^N_+, \\[0.3em]
   -\frac{\partial u}{\partial x_N} &= |u|^{q-2}u &\quad &\text{on }  \mathds{R}^{N-1},
\end{aligned}
\right.
\end{equation*}
admits no positive classical solutions for any constant $a>0$.  
Moreover, they showed that if $N \geq 1$ and $p, q > 1$, then there is no positive classical solution to the problem
\begin{equation}\label{Coro3}
\left\{
\begin{aligned}
   -\Delta u + |u|^{p-2}u &= 0 &\quad &\text{in } \mathds{R}^N_+, \\[0.3em]
   -\frac{\partial u}{\partial x_N} + |u|^{q-2}u &= 0 &\quad &\text{on } \mathds{R}^{N-1}.
\end{aligned}
\right.
\end{equation}
These problems can be regarded as limit cases of \eqref{PG} with  
\[
\rho_{\lambda}(s) = (1+\lambda s)^{\gamma}, \qquad \lambda \searrow 0.
\]  
What we have discovered, however, is that this perturbation introduced by the weight function $\rho$ can actually reverse the nonexistence results and yield the existence of positive solutions; see Remark~\ref{blue} for details.
\end{remark}

One can verify that the function  
\begin{equation}\label{instanton 1}
u(x',x_N) = 
\frac{\varepsilon^{(N-2)/2}}
     {\left[\varepsilon^2 + |x^\prime|^2 + (x_N - 1)^2 \right]^{(N-2)/2}},
\qquad \varepsilon > 0,
\end{equation}
solves the problem under a nonlinear Neumann boundary condition
\begin{equation}\label{critical equation}
\left\{
\begin{aligned}
   -\Delta u &= N(N-2)\,|u|^{2^\ast-2}u 
   &\quad &\text{in } \mathds{R}^N_+, \\[0.3em]
   -\frac{\partial u}{\partial x_N} &= b\,|u|^{2_\ast-2}u 
   &\quad &\text{on } \mathds{R}^{N-1},
\end{aligned}
\right.
\end{equation}
where $(N-2)\varepsilon^{-1} = b$.  
Moreover, it was established in \cite{Escobar2, Li-Zhu} that every $C^2$ solution of \eqref{critical equation} must be of the form \eqref{instanton 1}.
When $b=0$ in \eqref{critical equation}, the equation admits the solution
\[
u(x) = C \bigl(1 + |x|^2\bigr)^{\frac{2-N}{2}}.
\]  
If $\rho \equiv 1,$  \eqref{PG} have been studied extensively; see, for example, \cite{Chipot-Fila-Shafir1,Chipot-Fila-Shafir2, YanYan-Lei} and references therein.  
In the harmonic case, we recall \cite[Corollary~1.1]{YanYan-Lei}, where the authors proved that for $N \geq 3$ and $-\infty < q < 2_\ast$, there exists no positive classical solution of  
\begin{equation}\label{Coro1}
\left\{
\begin{aligned}
	-\Delta u &= 0 &&\text{in } \mathds{R}^N_+, \\
	-\frac{\partial u}{\partial x_N} &= |u|^{q-2}u &&\text{on } \mathds{R}^{N-1}.
\end{aligned}
\right.
\end{equation}

In the critical regime, the following classical Sobolev trace inequality holds:  
\begin{equation}\label{classical trace embedding}
    S_{\partial} \left( \int_{\mathds{R}^{N-1}} |u|^{2_\ast} \, \mathrm{d}x^\prime \right)^{2/2_\ast} 
    \leq \int_{\mathds{R}^N_+} |\nabla u|^2 \, \mathrm{d}x, 
    \quad \forall\, u \in \mathcal{D}^{1,2}(\mathds{R}^N_+),
\end{equation}
where the sharp constant is given by  
\begin{equation}\label{trace best constant}
    S_{\partial} 
    = \inf_{u \in \mathcal{D}^{1,2}(\mathds{R}^N_+) \setminus \{0\}} 
    \frac{\int_{\mathds{R}^N_+} |\nabla u|^2 \, \mathrm{d}x}
    {\left( \int_{\mathds{R}^{N-1}} |u|^{2_\ast} \, \mathrm{d}x^\prime \right)^{2/2_\ast}}
    = \frac{N-2}{2} \, \omega_{N-1}^{1/(N-1)},
\end{equation}
with $\omega_{N-1}$ denoting the volume of the $(N-1)$-dimensional unit sphere.  
This inequality plays a fundamental role in the analysis of variational problems with boundary nonlinearities and provides the natural framework for studying extremal functions attaining the best constant $S_\partial$. P.-L. Lions~\cite{Lions} proved that $S_{\partial}$ is indeed achieved. Moreover, the extremal functions were independently classified by Escobar~\cite{Escobar1} and Beckner~\cite{Beckner}, and are explicitly given by  
\begin{equation}\label{instanton}
    u_\varepsilon(x',x_N) 
    = \frac{\varepsilon^{(N-2)/2}}
    {\bigl(|x'|^2 + (x_N + \varepsilon)^2\bigr)^{(N-2)/2}}.
\end{equation}
These functions $u_\varepsilon$ solve the boundary value problem  
\begin{equation}\label{Escobar solution}
	\begin{cases}
		-\Delta u = 0 & \text{in } \mathds{R}^N_+, \\[0.3em]
		-\dfrac{\partial u}{\partial x_N} = S_\partial |u|^{2_\ast-2}u 
        & \text{on } \mathds{R}^{N-1}.
	\end{cases}
\end{equation}

However, as we shall see, in the problems \eqref{critical equation} and \eqref{Escobar solution} with critical growth, perturbations can have the opposite effect: rather than generating compactness, they may lead to nonexistence of solutions. The aim of this work is to investigate how such perturbations influence the existence and regularity of solutions, in contrast with the classical Laplacian case.

Since  \eqref{PG} has the divergence form, it is very convenient to use a variational approach. Let $C^\infty_\delta(\mathds{R}^N_+)$ be the set of the functions in $C^\infty_0(\mathds{R}^N)$ restricted to $\mathds{R}^N_+$. Consider the weighted Sobolev space $\mathcal{D}^{1,2}_\rho(\mathds{R}^N_+)$ defined as the closure of 
 $C_\delta^\infty(\mathds{R}^N_+)$ with respect to the norm
	$$
		\|u\|:=\left(\int_{\mathds{R}^{N}_+}\rho(x_N)|\nabla u|^2\, \mathrm{d} x\right)^{1/2}.
	$$

In \cite{Abreu-Furtado-Medeiros 2,Do-Freire-Medeiros} the following Hardy-Sobolev type inequality is proved: Let $N\geq 2$ and $\gamma>p-1>0$. Then, for every $u \in C^{\infty}_0(\mathds{R}^N)$ it holds 
\begin{equation}\label{Hardy p}
C_{p,\gamma}^p \int_{\mathds{R}_{+}^N}\frac{|u|^p}{(1+x_N)^{p-\gamma}}\, \mathrm{d}  x  + 
			C_{p,\gamma}^{p-1} \int_{\mathds{R}^{N-1}}|u|^p\, \mathrm{d}  x'
			\leq \int_{\mathds{R}_{+}^N}(1+x_N)^\gamma |\nabla u|^p \, \mathrm{d} x,
		\end{equation}
where
$$
			C_{p,\gamma}:=\frac{\gamma-p+1}{p}.
$$
In \cite{Do-Freire-Medeiros}  was derived Sobolev-type embeddings from inequality \eqref{Hardy p} and applied to investigate existence and Liouville-type results for the indefinite quasilinear elliptic equation of the form
\begin{equation}\label{bvp}
    -\mathrm{div}(\rho(x)|\nabla u|^{p-2}\nabla u)=a(x)|u|^{q-2}u-b(x)|u|^{s-2}u, \quad \mathds{R}^N_+
\end{equation}
 with homogeneous Neumann boundary condition and $\rho_0(1+x_N)^\gamma\leq \rho(x)$ for some $\gamma, \rho_0>0$.  
 Similar results were obtained in \cite{Do-Freire-Medeiros 2} for the boundary value problem \eqref{bvp} with $a\equiv b \equiv0$ with the nonlinear boundary condition,
 \begin{equation*}
     \rho(x',0)|\nabla u|^{p-2}\nabla u\cdot\nu=h(x')|u|^{q-2}u-m(x')|u|^{s-2}u, \quad \mbox{on } \quad \mathds{R}^{N-1}.
 \end{equation*}

From inequality \eqref{Hardy p} with $p=2$, we obtain
\begin{equation}\label{Hardy}
			\frac{\gamma-1}{2} \int_{\mathds{R}^{N-1}}u^2\, \mathrm{d}  x^\prime
			\leq \int_{\mathds{R}_{+}^N}(1+x_N)^\gamma |\nabla u|^2 \, \mathrm{d} x, \quad \forall u \in 	C^\infty_\delta(\mathds{R}^N_+),
            \end{equation}
which in combination with the classical Sobolev trace inequality \eqref{classical trace embedding}, gives for some positive constant $C_0=C_0(\gamma,q)$,
\begin{equation}\label{Hardy-L}
			\left( \int_{\mathds{R}^{N-1}}|u|^q\, \mathrm{d}  x^\prime \right)^{2/q}
			\leq C_0 \int_{\mathds{R}_{+}^N}(1+x_N)^\gamma |\nabla u|^2 \, \mathrm{d} x, \quad \forall u \in 	C^\infty_\delta(\mathds{R}^N_+),
            \end{equation}
for any $ 2 \leq q \leq 2_*$. This inequality yields the Sobolev trace embedding 
\begin{equation*}
 \mathcal{D}^{1,2}_\rho(\mathds{R}^N_+) \hookrightarrow L^q(\mathds{R}^{N-1}),   \quad 2 \leq q \leq 2_*,
\end{equation*}
which, thanks to the assumption $(\rho_0)$, makes it possible to apply a variational framework in our setting. First, we establish the following result concerning the best constants for the critical regime, which is of independent interest.
\begin{theorem}\label{best constant} Let $\rho$ satisfying $(1+s)^\gamma\leq \rho(s)\leq (1+s)^\beta$ for some $0<\gamma<\beta$. Then
 \begin{equation*}\label{rho trace best constant}
   S_{2_\ast,\mathds{R}^{N-1}}:= \inf_{u \in \mathcal{D}^{1,2}_\rho(\mathds{R}^N_+)\setminus\{0\}}\frac{\int_{\mathds{R}^N_+}\rho(x_N)|\nabla u|^2\mathrm{d}x}{\left(\int_{\mathds{R}^{N-1}}|u|^{2_\ast}\mathrm{d}x^\prime\right)^{2/2_\ast}}=S_{\partial},
\end{equation*}
and $S_{2_\ast,\mathds{R}^{N-1}}$ is no achieved.
\end{theorem}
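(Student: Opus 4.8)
The plan is to prove the equality $S_{2_\ast,\mathds{R}^{N-1}}=S_\partial$ by a two-sided estimate and then to rule out attainment by a rigidity argument. Two structural facts drive everything. First, the critical trace exponent $2_\ast$ makes the rescaling $\varphi\mapsto\varphi_\lambda:=\lambda^{(N-2)/2}\varphi(\lambda\,\cdot)$ leave both $\int_{\mathds{R}^N_+}|\nabla\varphi|^2\,\mathrm dx$ and $\int_{\mathds{R}^{N-1}}|\varphi|^{2_\ast}\,\mathrm dx'$ invariant, since $\tfrac{N-2}{2}\,2_\ast=N-1$; this is the mechanism that lets test functions concentrate at a boundary point without changing the classical quotient. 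Second, the hypothesis $(1+s)^\gamma\le\rho(s)\le(1+s)^\beta$ forces $\rho(s)\to1$ as $s\to0^+$ (squeeze) while keeping $\rho$ bounded on bounded sets; so a family concentrating at the boundary ``sees'' the unit weight in the limit.

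For the lower bound, note that $\gamma>0$ gives $\rho(s)\ge(1+s)^\gamma\ge1$ for all $s\ge0$, hence $\|\nabla u\|_{L^2(\mathds{R}^N_+)}\le\|u\|$ and $\mathcal D^{1,2}_\rho(\mathds{R}^N_+)\hookrightarrow\mathcal D^{1,2}(\mathds{R}^N_+)$ continuously (the traces also agree, both spaces being completions of $C^\infty_\delta(\mathds{R}^N_+)$). Thus every $u\in\mathcal D^{1,2}_\rho(\mathds{R}^N_+)$ is admissible in \eqref{classical trace embedding}, and
\[
\int_{\mathds{R}^N_+}\rho(x_N)|\nabla u|^2\,\mathrm dx\ \ge\ \int_{\mathds{R}^N_+}|\nabla u|^2\,\mathrm dx\ \ge\ S_\partial\left(\int_{\mathds{R}^{N-1}}|u|^{2_\ast}\,\mathrm dx'\right)^{2/2_\ast}.
\]
Dividing and taking the infimum over $u\neq0$ yields $S_{2_\ast,\mathds{R}^{N-1}}\ge S_\partial$.

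For the upper bound, I would fix $\varphi\in C^\infty_\delta(\mathds{R}^N_+)$ with nontrivial trace and test with $\varphi_\lambda\in C^\infty_\delta(\mathds{R}^N_+)\subset\mathcal D^{1,2}_\rho(\mathds{R}^N_+)$. A change of variables gives $\int_{\mathds{R}^{N-1}}|\varphi_\lambda|^{2_\ast}\,\mathrm dx'=\int_{\mathds{R}^{N-1}}|\varphi|^{2_\ast}\,\mathrm dx'$ and $\int_{\mathds{R}^N_+}\rho(x_N)|\nabla\varphi_\lambda|^2\,\mathrm dx=\int_{\mathds{R}^N_+}\rho(y_N/\lambda)|\nabla\varphi(y)|^2\,\mathrm dy$. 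On the fixed compact support of $\varphi$ one has $y_N/\lambda\le1$ for $\lambda$ large, so $1\le\rho(y_N/\lambda)\le2^\beta$, while the squeeze forces $\rho(y_N/\lambda)\to1$ pointwise; dominated convergence then gives $\int_{\mathds{R}^N_+}\rho(y_N/\lambda)|\nabla\varphi|^2\,\mathrm dy\to\int_{\mathds{R}^N_+}|\nabla\varphi|^2\,\mathrm dy$. Hence, for each such $\varphi$,
\[
S_{2_\ast,\mathds{R}^{N-1}}\ \le\ \lim_{\lambda\to\infty}\frac{\int_{\mathds{R}^N_+}\rho(x_N)|\nabla\varphi_\lambda|^2\,\mathrm dx}{\left(\int_{\mathds{R}^{N-1}}|\varphi_\lambda|^{2_\ast}\,\mathrm dx'\right)^{2/2_\ast}}=\frac{\int_{\mathds{R}^N_+}|\nabla\varphi|^2\,\mathrm dx}{\left(\int_{\mathds{R}^{N-1}}|\varphi|^{2_\ast}\,\mathrm dx'\right)^{2/2_\ast}},
\]
and taking the infimum over $\varphi\in C^\infty_\delta(\mathds{R}^N_+)$, which is dense in $\mathcal D^{1,2}(\mathds{R}^N_+)$, the right-hand side infimum equals $S_\partial$; thus $S_{2_\ast,\mathds{R}^{N-1}}\le S_\partial$. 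Combining the two bounds gives the claimed identity.

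It remains to show the infimum is not attained. If some $u\in\mathcal D^{1,2}_\rho(\mathds{R}^N_+)\setminus\{0\}$ realized it, we could normalize $\int_{\mathds{R}^{N-1}}|u|^{2_\ast}\,\mathrm dx'=1$, so that $\int_{\mathds{R}^N_+}\rho(x_N)|\nabla u|^2\,\mathrm dx=S_\partial$. Inserting $\rho\ge(1+x_N)^\gamma\ge1$ into the lower-bound chain forces equality throughout, in particular
\[
\int_{\mathds{R}^N_+}|\nabla u|^2\,\mathrm dx=\int_{\mathds{R}^N_+}(1+x_N)^\gamma|\nabla u|^2\,\mathrm dx,\qquad\text{i.e.}\qquad\int_{\mathds{R}^N_+}\left((1+x_N)^\gamma-1\right)|\nabla u|^2\,\mathrm dx=0.
\]
Since $\gamma>0$, the factor $(1+x_N)^\gamma-1$ is strictly positive on $\mathds{R}^N_+$, so $\nabla u=0$ a.e.; hence $\|u\|=0$ and $u=0$ in $\mathcal D^{1,2}_\rho(\mathds{R}^N_+)$, a contradiction. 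I expect the only genuinely delicate point to be the limit passage in the upper-bound step — controlling $\rho(y_N/\lambda)$ uniformly and showing $\rho(y_N/\lambda)\to1$ on the support of $\varphi$, which is precisely where both the lower and the upper power bounds on $\rho$ enter — together with the (standard) identification of $S_\partial$ with the infimum of the Dirichlet/trace quotient over $C^\infty_\delta(\mathds{R}^N_+)$.
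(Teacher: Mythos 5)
Your argument is correct, and for the upper bound $S_{2_\ast,\mathds{R}^{N-1}}\le S_\partial$ it takes a genuinely different route from the paper. The paper tests the weighted quotient with a cut-off Escobar instanton $\psi_\varepsilon=\varphi_r u_\varepsilon$ (with $u_\varepsilon$ from \eqref{instanton}), carries out an Aubin--Talenti type expansion showing the unweighted quotient is $S_\partial+O(\varepsilon^{N-2})$, and uses the upper bound $\rho(s)\le(1+s)^\beta\le(1+r)^\beta$ on the support to obtain $(1+r)^\beta S_\partial+O(\varepsilon^{N-2})$, finally sending $\varepsilon,r\to0^+$. You instead exploit the exact scale invariance of both the Dirichlet integral and the boundary $2_\ast$-norm under $\varphi\mapsto\lambda^{(N-2)/2}\varphi(\lambda\,\cdot)$, keeping an \emph{arbitrary} fixed $\varphi\in C^\infty_\delta(\mathds{R}^N_+)$, and pass to the limit $\lambda\to\infty$ via dominated convergence using the two-sided power bounds on $\rho$ only to get uniform boundedness on the (rescaled) support and the pointwise limit $\rho(y_N/\lambda)\to1$. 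Your approach is more elementary: it avoids the instanton and the expansion error terms entirely, needs no knowledge of the explicit extremals of the unweighted trace inequality, and clearly isolates the two roles of the weight hypothesis (local boundedness plus $\rho(0^+)=1$). The one small point worth making explicit, as you flag yourself, is that the infimum of the unweighted quotient over $C^\infty_\delta(\mathds{R}^N_+)\setminus\{0\}$ indeed equals $S_\partial$; this is a standard density argument, since both $u\mapsto\int_{\mathds{R}^N_+}|\nabla u|^2\,\mathrm dx$ and $u\mapsto\int_{\mathds{R}^{N-1}}|u|^{2_\ast}\,\mathrm dx'$ are continuous on $\mathcal D^{1,2}(\mathds{R}^N_+)$ and $\mathcal D^{1,2}_\rho\hookrightarrow\mathcal D^{1,2}$ with compatible traces. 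The lower bound and the non-attainment argument coincide with the paper's (the paper phrases non-attainment as a strict inequality $\int(1+x_N)^\gamma|\nabla u|^2\,\mathrm dx>\int|\nabla u|^2\,\mathrm dx$ for $u\ne0$; your equality-case rigidity is the same fact made explicit).
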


Using \eqref{Hardy p} with $p=2$ and Gagliardo-Nirenberg-Sobolev inequality and interporlation we have the continous embedding 
\begin{equation*}
   \mathcal{D}^{1,2}_\rho(\mathds{R}^N_+) \hookrightarrow L^q(\mathds{R}_+^{N}) \quad  \text{for} \quad 2 \leq q \leq 2^* \quad \text{and}  \quad \gamma \geq 2.
\end{equation*}
To exploit the cylindrical symmetry of its elements to establish compactness results we consider the space of functions in $\mathcal{D}^{1,2}_\rho(\mathds{R}^N_+)$ 
which are radial with respect to the $x'$-variable, precisely
\begin{equation}\label{R space}
    \mathcal{R}^{1,2}_\rho(\mathds{R}^N_+)
    := \Big\{\,u \in \mathcal{D}^{1,2}_\rho(\mathds{R}^N_+) : 
    u(x',x_N) = u(|x'|,x_N)\,\Big\}.
\end{equation}

By employing properties of Schwarz symmetrization, we prove the following:

\begin{theorem}\label{Compact embedding}
Assume $(\rho_0)$ with $\gamma>2$. Then, the embedding 
\begin{equation*}
		\mathcal{R}^{1,2}_\rho(\mathds{R}^N_+)\hookrightarrow\left\{
		\begin{aligned}
			L^p(\mathds{R}^N_+),~p\in (2,2^\ast)\\
			L^q(\mathds{R}^{N-1}),~q\in (2,2_\ast),
		\end{aligned}
		\right.
	\end{equation*}
is compact.
\end{theorem}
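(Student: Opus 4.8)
The plan is to exploit cylindrical (Schwarz) symmetry in the $x'$-variable to gain pointwise decay, and then combine it with the Hardy--Sobolev inequality \eqref{Hardy p} (with $p=2$) to upgrade weak convergence to strong convergence in $L^p(\mathds{R}^N_+)$ and $L^q(\mathds{R}^{N-1})$. First I would record the two continuous embeddings already available: from \eqref{Hardy p} (with $p=2$, $\gamma>2$) together with Gagliardo--Nirenberg--Sobolev and interpolation we have $\mathcal{D}^{1,2}_\rho(\mathds{R}^N_+)\hookrightarrow L^r(\mathds{R}^N_+)$ for $2\le r\le 2^\ast$, and from \eqref{Hardy-L} we have $\mathcal{D}^{1,2}_\rho(\mathds{R}^N_+)\hookrightarrow L^r(\mathds{R}^{N-1})$ for $2\le r\le 2_\ast$. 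Hence any bounded sequence $(u_n)\subset \mathcal{R}^{1,2}_\rho(\mathds{R}^N_+)$ has a subsequence converging weakly in $\mathcal{D}^{1,2}_\rho$ and in each $L^r$-space; call the limit $u$, which is again radial in $x'$. By interpolation it suffices to prove strong convergence for a single exponent in each open range $(2,2^\ast)$, resp.\ $(2,2_\ast)$; the endpoint cases then follow from boundedness at the critical exponent and strong convergence at a subcritical one.

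The key step is a radial-decay estimate. For fixed $x_N>0$, the function $r\mapsto u(r,x_N)$ is a radial function on $\mathds{R}^{N-1}$; a standard radial-lemma argument (integrating $\partial_r(|u|^{2})$ and using Cauchy--Schwarz in the $x'$-variables) gives, for a.e.\ $x_N$,
\begin{equation*}
|u(r,x_N)|^2 \le C\, r^{-(N-2)}\int_{\{|x'|\ge r\}} \big(|\nabla_{x'} u(x',x_N)|^2 + |u(x',x_N)|^2\big)\,\mathrm{d}x',
\end{equation*}
and then integrating in $x_N$ against the weight $(1+x_N)^\gamma$ (which dominates $\rho$ from below but is in turn controlled on the relevant mass by $\rho$ via $(\rho_0)$, so that the right-hand side is bounded by $C\|u\|^2$ plus a Hardy-controlled term) yields a uniform bound of the form $|u_n(x',x_N)| \le C \|u_n\|\, |x'|^{-(N-2)/2}$ for $|x'|$ large, uniformly in $x_N$. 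This decay at infinity in the $x'$-directions is exactly what is missing for compactness: it kills the ``escape of mass to $|x'|=\infty$'' scenario.

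With the decay in hand the argument is routine. On any bounded cylinder $Q_R := \{|x'|<R,\ 0<x_N<R\}$ the weight $\rho$ is bounded above and below, so $\mathcal{R}^{1,2}_\rho$ restricted to $Q_R$ embeds compactly into $L^p(Q_R)$ for $p<2^\ast$ by Rellich--Kondrachov, and the trace on $\{|x'|<R,\ x_N=0\}$ embeds compactly into $L^q$ for $q<2_\ast$ by the compact trace theorem; hence $u_n\to u$ strongly there. Outside $Q_R$ I would split the $x'$-region $\{|x'|\ge R\}$ and the $x_N$-region $\{x_N\ge R\}$ separately: on $\{|x'|\ge R\}$ the radial decay bound combined with the $L^{2^\ast}$ (resp.\ $L^{2_\ast}$) bound gives $\int_{|x'|\ge R}|u_n|^p \le C R^{-\delta}$ for some $\delta>0$ when $p>2$ (interpolating the $L^\infty$-decay against the critical norm), and similarly on the boundary; on $\{x_N\ge R\}$ the Hardy term $\int (1+x_N)^{\gamma-2}|u_n|^2 \le C\|u_n\|^2$ together with $\gamma>2$ forces the tail $\int_{x_N\ge R}|u_n|^2\to 0$ uniformly, which upgrades to the $L^p$ tail by interpolation with the critical bound. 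Choosing $R$ large makes the tails uniformly small, and letting $n\to\infty$ on $Q_R$ finishes the proof. The main obstacle is the radial-decay estimate in the half-space: one must handle the $x_N$-integration carefully so that the weight $\rho$ (only bounded below, possibly growing fast) does not spoil the uniform constant — this is where $(\rho_0)$ with $\gamma>2$ and the boundary term in the Hardy inequality \eqref{Hardy} are essential.
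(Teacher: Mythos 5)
Your proposed proof hinges on a uniform-in-$x_N$ pointwise radial-decay estimate of the form $|u(x',x_N)|\le C\|u\|\,|x'|^{-(N-2)/2}$, but the argument you sketch does not deliver it. The Strauss-type estimate you invoke is a \emph{per-slice} inequality: for a.e.\ fixed $x_N$ it bounds $|u(r,x_N)|^2$ by $r^{-(N-2)}\int_{\mathds{R}^{N-1}}(|\nabla_{x'}u|^2+u^2)(\cdot,x_N)\,\mathrm{d}x'$. Integrating that inequality in $x_N$ against $(1+x_N)^\gamma$ controls $\int_0^\infty (1+x_N)^\gamma|u(r,x_N)|^2\,\mathrm{d}x_N$, not $\sup_{x_N}|u(r,x_N)|$; the quantity $\sup_{x_N}\int_{\mathds{R}^{N-1}}(|\nabla_{x'}u|^2+u^2)(\cdot,x_N)\,\mathrm{d}x'$ is simply not controlled by $\|u\|_{\mathcal{D}^{1,2}_\rho}$. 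The paper's Lemma~\ref{Radial Lemma} is the correct replacement, and it differs from your sketch in two essential respects: it applies to the Schwarz rearrangement $u^*$ in $x'$ (whose monotonicity in $|x'|$ gives the lower bound $\omega(x_N)\ge C|x'|^{N-1}u^*(x',x_N)$, unavailable for a general radial $u$), and it obtains uniformity in $x_N$ by a genuinely one-dimensional argument: the averaged function $\omega(x_N)=\int_{\Gamma_{|x'|}}u^*\,\mathrm{d}y$ and its derivative $\omega'$ are both shown to lie in $L^2(0,\infty)$, and the pointwise bound on $\omega$ then comes from integrating $(\omega^2)'$ and Cauchy--Schwarz. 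Without rearrangement you lose the monotonicity; without the FTC-in-$x_N$ step you lose the uniformity.

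The paper also organizes the two halves differently from what you propose. For the interior embedding it avoids pointwise decay entirely: it splits $\mathds{R}^N_+$ at height $r_\varepsilon$, controls the tail $\{x_N>r_\varepsilon\}$ exactly as you suggest via the Hardy weight with $\gamma>2$ plus interpolation, and handles the unbounded slab $\mathds{R}^{N-1}\times(0,r_\varepsilon)$ by Lions' symmetry-compactness Lemma~\ref{Lions}, which exploits radiality in $x'$ directly rather than through decay. Only for the trace does the paper invoke rearrangement: it first uses the interior convergence already established to identify the weak limit of $(u_n^*)$ as $u^*$, gets strong convergence on $\Gamma_1$ by local compactness, and controls the far boundary tail by dominated convergence with the domination $|u_n^*|^q\le C|x'|^{-q(N-1)/2}\in L^1(\mathds{R}^{N-1}\setminus\Gamma_1)$, which requires $q>2$. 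To salvage your unified approach, you would need to replace $u$ by $u^*$ before the decay step and establish the decay by the one-dimensional $\omega$-argument rather than slice by slice; as written, the step ``yields a uniform bound $\ldots$'' is a gap.
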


Using \eqref{Hardy p} with $p=2$, together with the Gagliardo--Nirenberg--Sobolev inequality and interpolation, we obtain the continuous embedding
\[
\mathcal{D}^{1,2}_\rho(\mathds{R}^N_+) \hookrightarrow L^q(\mathds{R}^N_+),
\qquad 2 \leq q \leq 2^*, \; \gamma \geq 2.
\]

As an application of the compactness result above, we establish the attainability of certain best constants:

\begin{theorem}\label{Attainability}
Assume $(\rho_0)$ with $\gamma > 2$, $q \in (2, 2_\ast)$, and $p \in (2, 2^\ast)$. Then, the following best constants are attained
\begin{equation}\label{Constant1}
S_{q,\mathds{R}^{N-1}} := \inf_{u \in \mathcal{D}^{1,2}_\rho(\mathds{R}^N_+) \setminus \{0\}} \frac{\int_{\mathds{R}^N_+} \rho(x_N) |\nabla u|^2  \mathrm{d}x}{\left( \int_{\mathds{R}^{N-1}} |u|^{q}  \mathrm{d}x^\prime \right)^{2/q}},
\end{equation}
\begin{equation}\label{Constant2}
S_{p,\mathds{R}^N_+} := \inf_{u \in \mathcal{D}^{1,2}_\rho(\mathds{R}^N_+) \setminus \{0\}} \frac{\int_{\mathds{R}^N_+} \rho(x_N) |\nabla u|^2  \mathrm{d}x}{\left( \int_{\mathds{R}^N_+} |u|^p  \mathrm{d}x \right)^{2/p}}.
\end{equation}
Furthermore, for $\gamma > 1$ and $q \in [2, 2_\ast]$, we have $S_{q,\mathds{R}^{N-1}} > 0$, and this infimum is not attained when $q = 2$. Similarly, for $\gamma \geq 2$ and $p \in [2, 2^\ast]$, we have $S_{p,\mathds{R}^N_+} > 0$, and the infimum is not attained when $p = 2$.
\end{theorem}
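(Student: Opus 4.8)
The plan is to establish the attainability statements by the direct method in the calculus of variations, using the compact embeddings of Theorem~\ref{Compact embedding} after a Steiner symmetrization in the $x'$–variable, and to obtain the positivity and non-attainability statements from the Hardy–Sobolev inequality \eqref{Hardy-L} together with a scaling argument in $x'$.

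\emph{Attainability of $S_{q,\mathds{R}^{N-1}}$ and $S_{p,\mathds{R}^N_+}$.} We detail the case of $S_{q,\mathds{R}^{N-1}}$ with $q\in(2,2_\ast)$; the case of $S_{p,\mathds{R}^N_+}$ with $p\in(2,2^\ast)$ is verbatim the same, with the boundary term replaced by the $L^p(\mathds{R}^N_+)$–norm. First, $S_{q,\mathds{R}^{N-1}}\in(0,\infty)$ by \eqref{Hardy-L} together with $(\rho_0)$ and the obvious upper bound. Pick a minimizing sequence $(u_n)\subset\mathcal{D}^{1,2}_\rho(\mathds{R}^N_+)$ normalized by $\int_{\mathds{R}^{N-1}}|u_n|^q\,\mathrm{d}x'=1$, so $\|u_n\|^2\to S_{q,\mathds{R}^{N-1}}$. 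Replacing $u_n$ by the Steiner symmetrization $u_n^\star$ of $|u_n|$ in the variable $x'$, one has $u_n^\star\in\mathcal{R}^{1,2}_\rho(\mathds{R}^N_+)$, the boundary integral is unchanged (slice-wise equimeasurability applied to the trace on $\{x_N=0\}$), and $\|u_n^\star\|\le\|u_n\|$ by the Pólya–Szegő–type inequality for the weighted energy underlying Theorem~\ref{Compact embedding}. Hence $(u_n^\star)$ is a bounded minimizing sequence in $\mathcal{R}^{1,2}_\rho(\mathds{R}^N_+)$; passing to a subsequence, $u_n^\star\rightharpoonup u$ in $\mathcal{R}^{1,2}_\rho(\mathds{R}^N_+)$, and by the compact embedding $\mathcal{R}^{1,2}_\rho(\mathds{R}^N_+)\hookrightarrow L^q(\mathds{R}^{N-1})$ of Theorem~\ref{Compact embedding} (which uses $\gamma>2$ and the strict inequality $q<2_\ast$) we get $u_n^\star\to u$ strongly in $L^q(\mathds{R}^{N-1})$. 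Thus $\int_{\mathds{R}^{N-1}}|u|^q\,\mathrm{d}x'=1$, so $u\neq0$, while weak lower semicontinuity of the norm gives $\|u\|^2\le\liminf_n\|u_n^\star\|^2=S_{q,\mathds{R}^{N-1}}$; since $\|u\|^2\ge S_{q,\mathds{R}^{N-1}}\big(\int_{\mathds{R}^{N-1}}|u|^q\,\mathrm{d}x'\big)^{2/q}=S_{q,\mathds{R}^{N-1}}$ by definition of the infimum, equality holds and $u$ is an extremal. For the positivity part, if $\gamma>1$ and $q\in[2,2_\ast]$, combining \eqref{Hardy-L} with $(\rho_0)$ yields $\big(\int_{\mathds{R}^{N-1}}|u|^q\,\mathrm{d}x'\big)^{2/q}\le C_0\int_{\mathds{R}^N_+}(1+x_N)^\gamma|\nabla u|^2\,\mathrm{d}x\le C_0\|u\|^2$, whence $S_{q,\mathds{R}^{N-1}}\ge C_0^{-1}>0$; and if $\gamma\ge2$ and $p\in[2,2^\ast]$, the continuous embedding $\mathcal{D}^{1,2}_\rho(\mathds{R}^N_+)\hookrightarrow L^p(\mathds{R}^N_+)$ recalled above gives $\big(\int_{\mathds{R}^N_+}|u|^p\,\mathrm{d}x\big)^{2/p}\le C\|u\|^2$, whence $S_{p,\mathds{R}^N_+}\ge C^{-1}>0$.

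\emph{Non-attainability at $q=2$ and $p=2$.} Suppose, towards a contradiction, that $S_{2,\mathds{R}^{N-1}}$ were attained by some $u\in\mathcal{D}^{1,2}_\rho(\mathds{R}^N_+)\setminus\{0\}$, and set $A:=\int_{\mathds{R}^N_+}\rho(x_N)|\nabla_{x'}u|^2\,\mathrm{d}x$ and $B:=\int_{\mathds{R}^N_+}\rho(x_N)|\partial_{x_N}u|^2\,\mathrm{d}x$, so $\|u\|^2=A+B$. For $\lambda>0$, the dilation $u_\lambda(x',x_N):=u(x'/\lambda,x_N)$ still lies in $\mathcal{D}^{1,2}_\rho(\mathds{R}^N_+)$, and a change of variables gives $\int_{\mathds{R}^{N-1}}|u_\lambda|^2\,\mathrm{d}x'=\lambda^{N-1}\int_{\mathds{R}^{N-1}}|u|^2\,\mathrm{d}x'$ and $\|u_\lambda\|^2=\lambda^{N-1}\big(\lambda^{-2}A+B\big)$, so the quotient defining $S_{2,\mathds{R}^{N-1}}$ at $u_\lambda$ equals $\lambda^{-2}A+B$. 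Since $A+B=S_{2,\mathds{R}^{N-1}}$ is the infimum, $\lambda^{-2}A+B\ge A+B$ for every $\lambda>0$, which forces $A=0$. Then $\nabla_{x'}u=0$ a.e., so $u$ depends only on $x_N$; but then $\|u\|^2=|\mathds{R}^{N-1}|\int_0^\infty\rho(x_N)|u'(x_N)|^2\,\mathrm{d}x_N$, and since $|\mathds{R}^{N-1}|=+\infty$ while $\|u\|<\infty$, necessarily $u'\equiv0$, i.e.\ $u$ is constant; a constant has $\|u\|=0$, so \eqref{Hardy-L} forces $\|u\|_{L^q(\mathds{R}^{N-1})}\le C_0^{1/2}\|u\|=0$, hence $u\equiv0$, contradicting $u\neq0$. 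The same argument with $\int_{\mathds{R}^N_+}|u_\lambda|^2\,\mathrm{d}x=\lambda^{N-1}\int_{\mathds{R}^N_+}|u|^2\,\mathrm{d}x$ in place of the boundary integral rules out attainment of $S_{2,\mathds{R}^N_+}$.

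\emph{Main obstacle.} The only delicate point is the symmetrization step: the tangential part $\int\rho(x_N)|\nabla_{x'}u|^2$ decreases slice-wise under Steiner symmetrization in $x'$ by the classical Pólya–Szegő inequality on $\mathds{R}^{N-1}$ (the weight $\rho(x_N)$ being constant on each slice), but the vertical contribution $\int\rho(x_N)|\partial_{x_N}u|^2$ is not controlled slice-wise, so one must appeal to the weighted rearrangement estimate established in the proof of Theorem~\ref{Compact embedding}. Everything else is the standard direct-method scheme; subcriticality ($q<2_\ast$, resp.\ $p<2^\ast$) enters only through the compactness of the embedding, and attainment fails precisely at the endpoint $q=2$ (resp.\ $p=2$) because under $x'\mapsto x'/\lambda$ the denominator of the Rayleigh quotient then scales exactly like the leading factor $\lambda^{N-1}$ of the numerator, so the tangential energy $A$ can be driven to $0$ without paying any price.
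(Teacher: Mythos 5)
Your proof is correct and follows essentially the same approach as the paper: slicewise symmetrization in $x'$ (which the paper calls Schwarz symmetrization of $u(\cdot,x_N)$ in $\mathds{R}^{N-1}$, and you call Steiner symmetrization — the same operation) combined with the compact embedding of Theorem~\ref{Compact embedding} for attainability, and a tangential dilation $x'\mapsto x'/\lambda$ for non-attainability at $q=2$ and $p=2$. The weighted rearrangement inequality you correctly flag as the delicate point is exactly Lemma~\ref{Lemma-Simetria}, which supplies both the slicewise Pólya--Szegő bound on $\nabla_{x'}$ and the $L^2$-nonexpansivity applied to $x_N$-difference quotients, yielding $\|u^\ast\|\le\|u\|$.
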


\begin{remark} 
It is worth emphasizing that, under suitable additional assumptions, the conclusions of Theorem~\ref{Attainability} are in fact sharp. 
In the framework of Theorem~\ref{best constant}, one also finds that $S_{2_\ast,\mathds{R}^{N-1}}$ is not attained. 
Furthermore, under the extra assumption $(\rho_1)$ in Theorem~\ref{Attainability}, Theorem~\ref{Nonexistence1} shows that $S_{2^\ast,\mathds{R}^N_+}$ is not attained either.    
\end{remark}

\begin{definition} By a weak solution of \eqref{PG} we mean a function  
$ u \in \mathcal{D}^{1,2}_\rho(\mathds{R}^N_+)$
such that  
\begin{equation}\label{weak solution}
   \int_{\mathds{R}^{N}_+} \rho(x_N)\,\nabla u \cdot \nabla \varphi \, \mathrm{d}x
   = a \int_{\mathds{R}^{N}_+} |u|^{p-2}u \varphi \, \mathrm{d}x  
   + b \int_{\mathds{R}^{N-1}} |u|^{q-2}u \varphi \, \mathrm{d}x^\prime,
\end{equation}
for every testing function $\varphi \in C_\delta^\infty(\mathds{R}^N_+)$.  
For simplicity, and without loss of generality, we assume throughout this work that $\rho(0)=1$.

\end{definition}

It is standard to prove that weak solutions of problem~\eqref{PG} 
coincide with the critical points of the associated energy functional.
$I:\mathcal{D}^{1,2}_\rho(\mathds{R}^N_+) \to \mathds{R}$ defined by
\begin{equation*}
I(u)=\frac{1}{2}\int_{\mathds{R}^N_+}\rho(x_N)|\nabla u|^2\mathrm{d}x-\frac{b}{q}\int_{\mathds{R}^{N-1}}|u|^q\mathrm{d}x^\prime-\frac{a}{p}\int_{\mathds{R}^N_+}|u|^p\mathrm{d}x.
\end{equation*}

\subsection{Regularity results} We establish regularity results for \eqref{PG}, with explicit dependence on all relevant parameters. Our interest is describe when a weak solution can be promoted to a classical one, as well as to identify the minimal regularity required to recover the boundary condition and derive nonexistence results via Pohozaev-type identities. For that we will consider the following spaces 
\begin{equation}
    \begin{aligned}
        E_p(\mathds{R}^N_+) :=\, & \overline{C^\infty_\delta(\mathds{R}^N_+)}^{\|u\|_{E_p(\mathds{R}^N_+)}}\\ 
        E_q(\mathds{R}^{N-1}) :=\, & \overline{C^\infty_\delta(\mathds{R}^N_+)}^{\|u\|_{E_q(\mathds{R}^{N-1})}}
    \end{aligned}
\end{equation}
for $p,q\in(1,\infty),$ where 
\begin{equation*}   
\|u\|_{E_p(\mathds{R}^N_+)}=\|u\|+\|u\|_{p,\mathds{R}^N_+}\quad\text{and}\quad  \|u\|_{E_q(\mathds{R}^{N-1})}=\|u\|+\|u\|_{q,\mathds{R}^{N-1}}.
\end{equation*}
 By Lemma \ref{embedding}, $E_p(\mathds{R}^N_+)=\mathcal{D}^{1,2}_\rho(\mathds{R}^N_+)$ when $\gamma\geq2$ and $p\in[2,2^\ast]$ and similarly, $E_q(\mathds{R}^{N-1})=\mathcal{D}^{1,2}_\rho(\mathds{R}^N_+)$ when $\gamma>1$ and $q\in[2,2_\ast]$.

\bigskip

Concerning Holder regularity, we prove the following result:

\begin{theorem}\label{Holder regularity result}
Assume $(\rho_0)$ and that $\rho\in C^{1,\alpha}_{\mathrm{loc}}[0,\infty)$. In each of the following cases:
\begin{itemize}
    \item[$(i)$] $\gamma \geq 0$, $a > 0$, $b \leq 0$, $p = 2^\ast$, and $q \in (1,\infty)$;
    \item[$(ii)$] $\gamma \geq 2$, $a > 0$, $b \leq 0$, $p \in [2, 2^\ast)$, and $q \in (1,\infty)$;
\end{itemize}
if $u$ is a nonnegative weak solution in $E_q(\mathds{R}^{N-1})$, then \( u \in C^{2,\alpha}_{\mathrm{loc}}(\overline{\mathds{R}^N_+}) \) and \( u > 0 \) in \( \overline{\mathds{R}^N_+} \). Similarly, in each of the following cases:
\begin{itemize}
    \item[$(iii)$] $\gamma \geq 0$, $a \leq 0$, $b > 0$, $p \in (1,\infty)$, and $q = 2_\ast$;
    \item[$(iv)$] $\gamma > 1$, $a \leq 0$, $b > 0$, $p \in (1,\infty)$, and $q \in [2, 2_\ast)$;
\end{itemize}
if $u$ is a nonnegative weak solution in $E_p(\mathds{R}^N_+)$, then \( u \in C^{2,\alpha}_{\mathrm{loc}}(\overline{\mathds{R}^N_+}) \) and $u>0$ in $ \overline{\mathds{R}^N_+} $. Furthermore, if $\gamma\geq 2$, $a,b>0$ and $u$ is a nonnegative weak solution in $\mathcal{D}^{1,2}_\rho(\mathds{R}^N_+)$ with $p\in[2,2^\ast)$ and $q\in[2,2_\ast)$, then $u \in C^{2,\alpha}_{\mathrm{loc}}(\mathds{R}^N_+)$ and $u>0$ in $\mathds{R}^N_+$.
\end{theorem}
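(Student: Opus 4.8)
The plan is to localize the problem and run the standard elliptic bootstrap machine, the only non-routine choices being \emph{which} nonlinear term to discard (using its sign) in the Moser iteration, and the \emph{order} in which local boundedness, H\"older regularity and strict positivity are established, so as to avoid circularity. \textbf{Step 1 (local boundedness up to the boundary).} By $(\rho_0)$ we have $\rho\geq 1$, and since $\rho\in C^{1,\alpha}_{\mathrm{loc}}[0,\infty)$ is also bounded on compacts, the operator $-\mathrm{div}(\rho(x_N)\nabla\,\cdot\,)$ is uniformly elliptic with Lipschitz coefficient on every half-ball $B_R^+:=B_R(x_0)\cap\mathds{R}^N_+$, $x_0\in\overline{\mathds{R}^N_+}$. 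A weak solution $u$ satisfies $\nabla u\in L^2(\mathds{R}^N_+)$, hence $u\in L^{2^\ast}(\mathds{R}^N_+)$ with trace in $L^{2_\ast}(\mathds{R}^{N-1})$ by \eqref{classical trace embedding}, and moreover $u\in L^q(\mathds{R}^{N-1})$ in cases $(i)$--$(ii)$ (resp.\ $u\in L^p_{\mathrm{loc}}(\mathds{R}^N_+)$ in cases $(iii)$--$(iv)$) because $u\in E_q(\mathds{R}^{N-1})$ (resp.\ $E_p(\mathds{R}^N_+)$). After extending the weak identity to bounded, compactly supported test functions by density, I would run a Moser/De Giorgi--Stampacchia iteration on half-balls, testing with $\varphi=\eta^2 u\,(u_M)^{2(\beta-1)}$, $u_M:=\min\{u,M\}$ and $\eta$ a cutoff. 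In cases $(i)$--$(ii)$ the boundary term $b\int u^{q-1}\varphi\,\mathrm{d}x'$ is $\le 0$ (as $b\le 0$, $\varphi\ge 0$) and is simply discarded, so only the interior nonlinearity is iterated: this is the classical subcritical argument when $p<2^\ast$ (case $(ii)$ and the interior of the last statement) and the Brezis--Kato argument when $p=2^\ast$ (case $(i)$), where the critical term is absorbed into the Dirichlet energy on a small half-ball via Sobolev, using $\|u\|_{L^{2^\ast}(B_r^+)}^{2^\ast-2}\to 0$ as $r\to 0$. Cases $(iii)$--$(iv)$ are symmetric, now discarding the interior term ($a\le 0$) and iterating the boundary nonlinearity with the sharp trace inequality and, when $q=2_\ast$, the smallness of $\|u\|_{L^{2_\ast}(B_r\cap\mathds{R}^{N-1})}^{2_\ast-2}$. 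A covering argument gives $u\in L^\infty_{\mathrm{loc}}(\overline{\mathds{R}^N_+})$ in cases $(i)$--$(iv)$, and $u\in L^\infty_{\mathrm{loc}}(\mathds{R}^N_+)$ in the last statement.

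\textbf{Step 2 ($C^{0,\alpha}$ and $C^1$ up to the boundary).} With $u\in L^\infty_{\mathrm{loc}}$, the source $f:=a|u|^{p-2}u$ and the conormal datum $g:=b|u|^{q-2}u$ are locally bounded, so boundary De Giorgi--Nash estimates for the conormal (Neumann) problem yield $u\in C^{0,\alpha_0}_{\mathrm{loc}}(\overline{\mathds{R}^N_+})$ for some $\alpha_0\in(0,1)$. The elementary bound $|t^r-s^r|\le C\bigl(|t-s|+|t-s|^{\min\{1,r\}}\bigr)$ for $t,s\ge 0$, $r>0$, shows $f,g\in C^{0,\beta}_{\mathrm{loc}}$ with $\beta=\alpha_0\min\{1,p-1,q-1\}>0$ (no positivity needed here). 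Since $\rho\in C^{1,\alpha}_{\mathrm{loc}}\subset C^{0,\alpha}_{\mathrm{loc}}$, divergence-form Schauder estimates up to the boundary for the conormal problem then give $u\in C^{1,\beta'}_{\mathrm{loc}}(\overline{\mathds{R}^N_+})$ for some $\beta'>0$ (only the interior version is used in the last statement). \textbf{Step 3 (positivity).} Assume $u\not\equiv0$. In cases $(i)$, $(ii)$ and the last statement, $a>0$ makes $-\mathrm{div}(\rho\nabla u)=a\,u^{p-1}\ge0$, so $u$ is a nonnegative supersolution and the strong maximum principle forces $u>0$ in $\mathds{R}^N_+$; in cases $(iii)$, $(iv)$ one writes the equation as $-\mathrm{div}(\rho\nabla u)+\bigl(-a\,u^{p-2}\bigr)u=0$ with nonnegative zeroth-order coefficient and concludes the same way. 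For the boundary in cases $(i)$--$(iv)$: if $u(x_0)=0$ at some $x_0\in\mathds{R}^{N-1}$, Hopf's lemma (applicable since $u\in C^1$ near $x_0$ by Step 2) gives $-\partial u/\partial x_N(x_0)=\partial u/\partial\nu(x_0)<0$, contradicting $-\partial u/\partial x_N(x_0)=b|u(x_0)|^{q-2}u(x_0)=0$. Hence $u>0$ on $\overline{\mathds{R}^N_+}$.

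\textbf{Step 4 (bootstrap to $C^{2,\alpha}$).} Since $u>0$ and $u\in C^{1,\beta'}_{\mathrm{loc}}(\overline{\mathds{R}^N_+})$, on every compact set $u$ is bounded below by a positive constant, so $t\mapsto t^{p-1}$ and $t\mapsto t^{q-1}$ act smoothly and $f=a\,u^{p-1},\,g=b\,u^{q-1}\in C^{1,\beta'}_{\mathrm{loc}}$. Writing the equation in nondivergence form $-\Delta u=\rho(x_N)^{-1}\bigl(f+\rho'(x_N)\,\partial_{x_N}u\bigr)$, the right-hand side lies in $C^{0,\beta'}_{\mathrm{loc}}$ and the conormal datum in $C^{1,\beta'}_{\mathrm{loc}}$, so Schauder estimates give $u\in C^{2,\beta'}_{\mathrm{loc}}(\overline{\mathds{R}^N_+})$; a second round (now $f\in C^{0,1}\subset C^{0,\alpha}$, $g\in C^{1,1}\subset C^{1,\alpha}$, $\partial_{x_N}u\in C^{0,1}\subset C^{0,\alpha}$) yields $u\in C^{2,\alpha}_{\mathrm{loc}}(\overline{\mathds{R}^N_+})$, in particular a classical solution. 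The last statement follows identically using only interior Schauder estimates, giving $u\in C^{2,\alpha}_{\mathrm{loc}}(\mathds{R}^N_+)$ and $u>0$ in $\mathds{R}^N_+$.

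The main obstacle is Step~1 in the critical cases $p=2^\ast$ (case $(i)$) and $q=2_\ast$ (case $(iii)$): the Moser/Brezis--Kato iteration must be carried out on half-balls touching $\mathds{R}^{N-1}$, simultaneously handling the interior Sobolev inequality and the sharp trace inequality, and the sign of the non-iterated nonlinear term has to be exploited at every step so that it can be discarded throughout the iteration. The remaining steps are routine local elliptic regularity; the only further care needed is the order of Steps~2--4 (so as not to invoke positivity before $u$ is known to be $C^1$ up to the boundary) and the low regularity of $t\mapsto t^{p-1},\,t^{q-1}$ at $t=0$, which is why positivity is inserted before the final bootstrap.
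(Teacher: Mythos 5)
Your proposal follows the same high-level architecture as the paper's proof --- local boundedness, $C^1$ regularity up to the boundary, strict positivity, then a Schauder bootstrap to $C^{2,\alpha}$ --- but the tools you use at each stage differ genuinely from the paper's. The paper establishes \emph{global} $L^\infty(\mathds{R}^N_+)\cap L^\infty(\mathds{R}^{N-1})$ bounds (Lemmas~\ref{Linfty} and~\ref{Linfty 2}) via a Moser iteration on the whole half-space together with a truncation/sign argument for the boundary trace, whereas you propose a purely local De Giorgi--Moser iteration on half-balls (with Brezis--Kato absorption in the critical cases); for the present theorem local bounds indeed suffice, though the paper's global bounds are reused later (Lemma~\ref{Harnack inequality}). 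For $C^1$ regularity you propose boundary De Giorgi--Nash followed by divergence-form Schauder, while the paper invokes Lieberman's boundary $C^{1,\alpha}$ theorem \cite[Theorem~2]{Lieberman} directly (Lemma~\ref{Lib}); both are legitimate, and yours avoids quoting the degenerate-equation result. A nice observation of yours is that one can get $u^{p-1}\in C^{0,\beta}_{\mathrm{loc}}$ with the smaller exponent $\beta=\alpha_0\min\{1,p-1,q-1\}$ \emph{before} establishing positivity; the paper instead obtains interior positivity first (via Trudinger's Harnack inequality), then uses local Lipschitz continuity of $t\mapsto t^{p-1}$ away from $t=0$. Your final Schauder bootstrap is essentially the same as the paper's two applications of \cite[Theorem~9.19]{GT}.

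One place where you should be careful is Step~3 in cases $(iii)$ and $(iv)$: you write the equation as $-\mathrm{div}(\rho\nabla u)+(-a\,u^{p-2})u=0$ and invoke the strong maximum principle citing the nonnegative zeroth-order coefficient. But for $p\in(1,2)$ the coefficient $c(x)=-a\,u(x)^{p-2}$ is unbounded near any zero of $u$, so the standard strong maximum principle (which needs $c\in L^s_{\mathrm{loc}}$ for some $s>N/2$, or $c$ bounded) does not immediately apply; indeed, for the absorption $-\mathrm{div}(\rho\nabla u)=a\,u^{p-1}$ with $a<0$ and $p<2$, V\'azquez's integral criterion $\int_0^1 B(s)^{-1/2}\,\mathrm{d}s=\infty$ fails and dead cores are in principle possible. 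You should either restrict to $p\geq 2$ in this branch, verify that $c\in L^s_{\mathrm{loc}}$ by exploiting higher-order vanishing of $u$ near its zeros, or argue as the paper does (quoting a Harnack-type inequality for solutions rather than a zeroth-order rewrite). That said, the paper's own appeal to \cite[Theorem~1.1]{Trudinger} in this same regime is equally delicate, since for $p<2$ the absorption term $a\,u^{p-1}$ is not of ``homogeneous'' structure as $u\to 0$, and Trudinger's Harnack inequality as stated involves an additive constant coming from inhomogeneous structure terms; so this concern is arguably shared with the paper rather than a defect unique to your argument. Apart from that point, your proposal is sound and is a valid alternative route to the conclusion.
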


Without requiring Hölder continuity of $\rho^\prime$, we can still obtain $H^2_{\mathrm{loc}}(\overline{\mathds{R}^N_+)}$-regularity. This level of regularity is sufficient to recover the boundary condition for weak solutions and to establish the Pohozaev identity. 
The following result holds:

\begin{theorem}\label{H2_loc regularity theorem} Assume $(\rho_0)$ with $\rho\in C^1[0,\infty)$. Then, in each of following assertions:
\begin{itemize}
    \item[$(i)$]  $u$ is a nonnegative weak solution in $E_q(\mathds{R}^{N-1})$, with $\gamma\geq 2$, $a > 0$, $b \leq 0$, $p \in [2,2^\ast]$, and $q \in (1,\infty)$;
    \item[$(ii)$]  $u$ is a nonnegative weak solution in $E_p(\mathds{R}^N_+)$, with $\gamma>1$, $a \leq 0$, $b > 0$, $p \in (1,\infty)$, and $q \in (2, 2_\ast]$;
\end{itemize}
we have $u\in H^2_{\mathrm{loc}}(\overline{\mathds{R}^N_+})$. Furthermore, the same conclusion holds in case $(i)$ for $\gamma \geq 0$ and $p = 2^*$.
\end{theorem}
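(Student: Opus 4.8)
The plan is to recast the problem in divergence form, $Lu:=-\mathrm{div}(\rho(x_N)\nabla u)=a|u|^{p-2}u$ in $\mathds{R}^N_+$ with $-\partial u/\partial x_N=b|u|^{q-2}u$ on $\mathds{R}^{N-1}$, and to proceed in three stages: local boundedness of $u$ up to the boundary; a positivity statement (needed only in one subcase); and a Nirenberg tangential difference-quotient argument delivering the second derivatives. Recall that $\rho(0)=1$ and $\rho\ge 1$ make $L$ uniformly elliptic, with $C^1$ coefficient, on every compact subset of $\overline{\mathds{R}^N_+}$, and that the half-space Sobolev and trace inequalities together with $\rho\ge 1$ give at the outset $u\in L^{2^\ast}(\mathds{R}^N_+)\cap L^{2_\ast}(\mathds{R}^{N-1})\cap H^1_{\mathrm{loc}}(\overline{\mathds{R}^N_+})$.

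In the first stage I would show $u\in L^\infty_{\mathrm{loc}}(\overline{\mathds{R}^N_+})$ via a Moser/Brezis--Kato iteration driven by the sign conditions. In case $(i)$, where $a>0$ and $b\le 0$, testing the weak formulation against truncated powers of $u$ makes the boundary contribution $b\int_{\mathds{R}^{N-1}}|u|^{q-2}u\,\varphi$ nonpositive, so it is discarded, while the interior term with $p\le 2^\ast$ is absorbed through the local Sobolev inequality; the critical endpoint $p=2^\ast$ is treated by the Brezis--Kato truncation device, and since $2^\ast$ is critical for $\mathcal{D}^{1,2}$ regardless of the behaviour of $\rho$ near $\{x_N=0\}$ this step already works for $\gamma\ge 0$, whereas for $p<2^\ast$ one uses the integrability supplied by the weighted embeddings ($\gamma\ge 2$). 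In case $(ii)$, where $a\le 0$ and $b>0$ with $q\le 2_\ast$, the interior term is discarded and the trace-critical boundary term is handled by the same truncation device. Either way one obtains $u\in L^s_{\mathrm{loc}}$ for every finite $s$, hence $u\in L^\infty_{\mathrm{loc}}(\overline{\mathds{R}^N_+})$; consequently $\tilde f:=a|u|^{p-2}u$ and $\tilde g:=b|u|^{q-2}u$ are locally bounded, and De Giorgi--Nash--Moser for the divergence-form Neumann problem (which only needs $\rho$ uniformly elliptic and bounded) yields $u\in C^{0,\alpha}_{\mathrm{loc}}(\overline{\mathds{R}^N_+})$ for some $\alpha\in(0,1)$.

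The difference-quotient method, applied to $L$ on a boundary half-ball $B^+$, requires $\tilde f\in L^2(B^+)$ — immediate from $u\in L^\infty_{\mathrm{loc}}$ — and $\tilde g\in H^{1/2}$ on the flat part. When $q\ge 2$ (so in all of case $(ii)$), $t\mapsto|t|^{q-2}t$ is locally Lipschitz and $\tilde g\in H^{1/2}_{\mathrm{loc}}(\mathds{R}^{N-1})$ follows from $u|_{\mathds{R}^{N-1}}\in H^{1/2}_{\mathrm{loc}}$ by the Gagliardo seminorm. When $q<2$ in case $(i)$ this fails at the origin, so I would first establish, using $b\le 0$, that $u\equiv 0$ (the theorem being then trivial) or $u>0$ on $\overline{\mathds{R}^N_+}$: the boundary condition reads $-\partial u/\partial x_N+c\,u=0$ with $c:=-b\,u^{q-2}\ge 0$, a Robin condition with nonnegative coefficient, and the weak strong maximum principle and Hopf lemma for divergence-form operators with bounded measurable uniformly elliptic coefficients apply; then $u$ is bounded away from $0$ on compacts, $t\mapsto t^{q-1}$ is Lipschitz there, and again $\tilde g\in H^{1/2}_{\mathrm{loc}}(\mathds{R}^{N-1})$.

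Finally, for $1\le i\le N-1$ and small $|h|$, the tangential difference quotient $D^h_i u$ solves \emph{exactly} $-\mathrm{div}(\rho\nabla D^h_i u)=D^h_i\tilde f$ in $B^+$ with $-\partial_{x_N}D^h_i u=D^h_i\tilde g$ on the flat part, because $\rho=\rho(x_N)$ is invariant under translations in $x_i$. Testing with $\eta^2 D^h_i u$ for a cutoff $\eta$, transferring the difference quotient onto the test function in the interior term (so that only $\tilde f\in L^2_{\mathrm{loc}}$ is used) and bounding the boundary term by the $H^{-1/2}$--$H^{1/2}$ duality, with $\|D^h_i\tilde g\|_{H^{-1/2}}\le C\|\tilde g\|_{H^{1/2}}$ and the trace inequality, one absorbs the gradient contributions (using $\rho\ge 1$ and $0\le\eta\le1$) to obtain $\int_{B^+}\rho\,\eta^2|\nabla D^h_i u|^2\le C$ uniformly in $h$. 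Letting $h\to0$ gives $\partial_i\partial_j u\in L^2_{\mathrm{loc}}(\overline{\mathds{R}^N_+})$ whenever $\min(i,j)\le N-1$; expanding $-\rho\,\Delta u-\rho'\,\partial_{x_N}u=\tilde f$ and solving for the normal second derivative, $\partial_{x_N}^2 u=-\rho^{-1}(\tilde f+\rho'\,\partial_{x_N}u)-\sum_{i<N}\partial_i^2 u$, which lies in $L^2_{\mathrm{loc}}$ because $\rho^{-1},\rho'\in C^0$, $\tilde f\in L^2_{\mathrm{loc}}$ and $\partial_{x_N}u\in L^2_{\mathrm{loc}}$; combined with standard interior $H^2$-regularity this gives $u\in H^2_{\mathrm{loc}}(\overline{\mathds{R}^N_+})$. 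I expect the main obstacle to be the first stage: until $u$ is known to be locally bounded, $a|u|^{p-2}u$ need not even lie in $L^2_{\mathrm{loc}}$ when $p$ is large, and the critical endpoints $p=2^\ast$, $q=2_\ast$ genuinely require the Brezis--Kato truncation rather than a plain Moser iteration; the boundary term in the difference-quotient estimate is the secondary difficulty, and it is precisely what forces the positivity detour when $q<2$.
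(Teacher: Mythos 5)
Your overall scheme---establish local boundedness, then run tangential difference quotients, then recover $\partial^2_{x_N} u$ from the equation---matches the paper's. The substantive divergences are in how the boundary term in the difference-quotient estimate is handled.

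\textbf{A gap in case $(i)$ with $q<2$.} You propose a detour through a strong maximum principle and Hopf lemma to get $u>0$ on $\overline{\mathds{R}^N_+}$, in order to make $t\mapsto t^{q-1}$ locally Lipschitz and hence $\tilde g\in H^{1/2}_{\mathrm{loc}}$. This detour has a flaw as stated: you invoke the Hopf lemma ``for divergence-form operators with bounded measurable uniformly elliptic coefficients'' for the Robin condition $-\partial_{x_N}u+c\,u=0$ with $c=-b\,u^{q-2}$, but when $q<2$ the coefficient $c$ blows up precisely at boundary zeros of $u$, which is exactly where the lemma is to be applied; the ``bounded coefficients'' hypothesis you cite is not met. (One could instead try to get a boundary Hopf statement from $C^{1,\alpha}$ regularity up to the boundary, but at this point in the argument you only have $C^{0,\alpha}_{\mathrm{loc}}$.) More importantly, the whole detour is unnecessary: the paper simply observes that for $q>1$ the map $t\mapsto t^{q-1}$ is nondecreasing on $[0,\infty)$, so the boundary integrand $D^i_h(u^{q-1})\,D^i_h(u)\ge 0$ pointwise, and since $b\le 0$ the boundary term $b\int_{\mathds{R}^{N-1}}D^i_h(u^{q-1})D^i_h(u)\,\mathrm{d}x'$ is automatically nonpositive and can be discarded for \emph{all} $q\in(1,\infty)$, with no regularity of $\tilde g$ required. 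You should replace the positivity step by this sign observation.

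\textbf{A genuine alternative in case $(ii)$.} Here $b>0$ so the boundary term has the wrong sign and cannot be dropped. The paper's route is global: it uses the mean-value inequality $|s^{q-1}-t^{q-1}|\le(q-1)(\lambda s+(1-\lambda)t)^{q-2}|s-t|$, the decay $u(x')\to 0$ as $|x'|\to\infty$ from its Harnack-type Lemma, and the weighted Hardy trace inequality \eqref{Hardy} to absorb the far-field boundary contribution into $\int_{\mathds{R}^N_+}\rho|\nabla D^i_h u|^2$, then controls the near-field part by $C^{1,\alpha}_{\mathrm{loc}}$ regularity (Lieberman). Your proposal replaces this with a local $H^{-1/2}$--$H^{1/2}$ duality: since $q>2$ in case $(ii)$, $t\mapsto t^{q-1}$ is locally Lipschitz, so $\tilde g=bu^{q-1}\in H^{1/2}_{\mathrm{loc}}(\mathds{R}^{N-1})$, and $\|D^i_h\tilde g\|_{H^{-1/2}}\lesssim\|\tilde g\|_{H^{1/2}}$ uniformly in $h$; the pairing with $\eta^2 D^i_h u$ is then absorbed by Young. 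This is correct and arguably more modular (it avoids the decay lemma and the Lieberman $C^{1,\alpha}$ input, needing only $C^{0,\alpha}$ and the trace theorem), at the cost of producing only local bounds where the paper actually obtains global $L^2$ control of the tangential Hessian.

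\textbf{Minor point.} Your treatment of the interior term by transferring the difference quotient onto the test function is fine, but note that in case $(ii)$ it is also nonpositive (same sign observation, now with $a\le 0$), so it can simply be dropped there as well.
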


\subsection{Existence results}
 
First, we investigate the results of existence for \eqref{PG} with $a\leq 0$ and $b>0$. In this case, the result obtained is as follows:

\begin{theorem}\label{Problem1-Existence} Assume $(\rho_0)$ with $\gamma>1$, $a\leq0$, $b>0$ and suppose that $1<p$ and $\max\{2,p\}<q<2_\ast$. Then problem \eqref{PG} has a nontrivial weak solution in $E_p(\mathds{R}^N_+)$, which is positive. Furthermore, if $\rho \in C^{1,\alpha}_{\mathrm{loc}}[0,\infty)$, $u$ belongs $C^{2,\alpha}_{\mathrm{loc}}(\overline{\mathds{R}^N_+})$ and is positive in $\overline{\mathds{R}^N_+}$. 
\end{theorem}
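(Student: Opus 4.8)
The plan is to produce the solution as a mountain‑pass critical point of the energy functional $I$ (equivalently, by minimization over the Nehari manifold), restricted to the subspace of functions radial in the tangential variable $x'$, where the weight $\rho$ (through the Hardy inequality \eqref{Hardy}) together with the cylindrical symmetry restores the compactness lost on the full space $\mathcal{D}^{1,2}_\rho(\mathds{R}^N_+)$. Since $a\le0$, the term $-\frac{a}{p}\int_{\mathds{R}^N_+}|u|^p$ is nonnegative and $I$ is finite and of class $C^1$ precisely on $E_p(\mathds{R}^N_+)$; I would therefore work on the Banach space $X:=E_p(\mathds{R}^N_+)\cap\mathcal{R}^{1,2}_\rho(\mathds{R}^N_+)$ (when $a=0$ the exponent $p$ is absent from \eqref{PG} and one argues directly in $\mathcal{R}^{1,2}_\rho(\mathds{R}^N_+)$). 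To get a \emph{nonnegative} solution, I would first replace $|u|^{p-2}u$ and $|u|^{q-2}u$ in $I$ by $(u^+)^{p-1}$ and $(u^+)^{q-1}$, obtaining a truncated functional $\tilde I$ whose nontrivial critical points will turn out to be $\ge0$.

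For the mountain‑pass geometry of $\tilde I$ on $X$: since $q>2$ and $\mathcal{D}^{1,2}_\rho(\mathds{R}^N_+)\hookrightarrow L^q(\mathds{R}^{N-1})$ continuously by \eqref{Hardy-L}, one gets $\tilde I(u)\ge\frac12\|u\|^2-Cb\|u\|^{q}\ge\delta>0$ on a small sphere $\{\|u\|=r\}$; and fixing a nonzero radial $w\in C^\infty_\delta(\mathds{R}^N_+)$ with $w\ge0$ and $w(\cdot,0)\not\equiv0$, the hypothesis $q>\max\{2,p\}$ yields $\tilde I(tw)\to-\infty$ as $t\to+\infty$. Denoting by $c>0$ the minimax level and by $(u_n)\subset X$ a Palais--Smale sequence at level $c$, boundedness of $(u_n)$ follows from $\tilde I(u_n)-\frac1q\tilde I'(u_n)u_n=\big(\frac12-\frac1q\big)\|u_n\|^2+|a|\big(\frac1p-\frac1q\big)\|u_n^+\|_p^p$, both coefficients being positive because $q>2$ and $q>p$. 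Extracting $u_n\rightharpoonup u$ in $\mathcal{D}^{1,2}_\rho(\mathds{R}^N_+)$, local Rellich and local trace compactness on bounded subsets of $\overline{\mathds{R}^N_+}$ (available since $\rho\ge1$ and $\mathcal{D}^{1,2}_\rho(\mathds{R}^N_+)\hookrightarrow L^{2_\ast}(\mathds{R}^{N-1})$) let one pass to the limit in $\tilde I'(u_n)\varphi\to0$ against every $\varphi\in C^\infty_\delta(\mathds{R}^N_+)$, so $u$ is a weak solution of the truncated problem.

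The main obstacle is ruling out $u\equiv0$, and this is where the hypothesis $\gamma>1$ is really used: one invokes that the embedding $\mathcal{R}^{1,2}_\rho(\mathds{R}^N_+)\hookrightarrow L^q(\mathds{R}^{N-1})$ is \emph{compact} for $q\in(2,2_\ast)$. Although Theorem~\ref{Compact embedding} is stated for $\gamma>2$, this boundary part already holds under $\gamma>1$ by the same Schwarz‑symmetrization argument combined with \eqref{Hardy}. Consequently $\int_{\mathds{R}^{N-1}}|u_n|^q\to\int_{\mathds{R}^{N-1}}|u|^q$; if $u\equiv0$, then from $\tilde I'(u_n)u_n\to0$ one gets $\|u_n\|^2\le b\int_{\mathds{R}^{N-1}}|u_n|^q+o(1)\to0$, whence $\tilde I(u_n)\to0$, contradicting $c>0$. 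Therefore $u\ne0$, and $u\in X\subset E_p(\mathds{R}^N_+)$ by construction.

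It remains to check $u\ge0$ and to promote it to a classical solution. Testing the truncated equation with $-u^-$ and using $(u^+)^{p-1}u^-=(u^+)^{q-1}u^-=0$ pointwise gives $\|u^-\|^2=0$, so $u\ge0$ and $u$ is a nontrivial weak solution of \eqref{PG} in $E_p(\mathds{R}^N_+)$. Since $\gamma>1$, $a\le0$, $b>0$ and $q\in(2,2_\ast)$, Theorem~\ref{H2_loc regularity theorem}$(ii)$ gives $u\in H^2_{\mathrm{loc}}(\overline{\mathds{R}^N_+})$; as the interior equation reads $-\mathrm{div}(\rho(x_N)\nabla u)=a\,u^{p-1}$ with $a\le0$, the strong maximum principle together with Hopf's lemma on $\mathds{R}^{N-1}$ (using the boundary condition $-\partial u/\partial x_N=b\,u^{q-1}\ge0$ and $b>0$) forces $u>0$ in $\mathds{R}^N_+$. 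Finally, if $\rho\in C^{1,\alpha}_{\mathrm{loc}}[0,\infty)$, then Theorem~\ref{Holder regularity result}$(iv)$ upgrades $u$ to $C^{2,\alpha}_{\mathrm{loc}}(\overline{\mathds{R}^N_+})$ with $u>0$ on $\overline{\mathds{R}^N_+}$, completing the proof.
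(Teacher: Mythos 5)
The critical gap in your proposal is the claim that the compactness of the trace embedding $\mathcal{R}^{1,2}_\rho(\mathds{R}^N_+)\hookrightarrow L^q(\mathds{R}^{N-1})$, $q\in(2,2_\ast)$, ``already holds under $\gamma>1$ by the same Schwarz-symmetrization argument.'' It does not. That compactness (Theorem~\ref{Compact embedding}) rests on the pointwise decay estimate of Lemma~\ref{Radial Lemma}, whose proof needs $\|u^\ast\|_{L^2(\mathds{R}^N_+)}\le C\|u^\ast\|$, i.e.\ the embedding $\mathcal{D}^{1,2}_\rho(\mathds{R}^N_+)\hookrightarrow L^2(\mathds{R}^N_+)$, which in turn requires $\gamma\ge 2$ (it comes from \eqref{Hardy p} with $p=2$ and the term $(1+x_N)^{\gamma-2}\ge 1$). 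Moreover, the trace-compactness proof also invokes the \emph{interior} compactness $\mathcal{R}^{1,2}_\rho\hookrightarrow L^p(\mathds{R}^N_+)$ to identify the weak limit $u_n^\ast\rightharpoonup u^\ast$, which again needs $\gamma>2$. For $\gamma\in(1,2]$ neither ingredient is available, and the Hardy inequality only gives a \emph{weighted} $L^2$-control with weight $(1+x_N)^{\gamma-2}$ that decays rather than lower-bounding $1$. So your cylindrical-symmetry route recovers compactness only in the regime $\gamma>2$ — which is exactly why the paper reserves that argument for Theorem~\ref{Problem2-Existence} and uses a different route for Theorem~\ref{Problem1-Existence}.

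The paper's proof works in the full space $E_p(\mathds{R}^N_+)$ (no symmetry restriction) and exploits invariance of $I$ under translations in $x'$ (since $\rho$ depends on $x_N$ only). It produces a nonnegative Palais--Smale sequence $(u_n)$, proves boundedness and a lower bound $\int_{\mathds{R}^{N-1}}|u_n|^q\ge C_0>0$ (Lemma~\ref{limitacao inferior}), then via a Lions-type nonvanishing lemma (Lemmas~\ref{key lemma} and~\ref{sup}) finds balls $\Gamma_1(y_n)$ carrying a uniform amount of $L^2$-mass on the boundary. Translating $v_n(x',x_N):=u_n(x'+y_n,x_N)$ yields another Palais--Smale sequence whose weak limit is nontrivial by local trace compactness. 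This concentration-compactness / translation argument needs only $\gamma>1$, which is precisely what the theorem asserts. To salvage your proposal in the same generality you would have to replace the symmetrization/decay mechanism with a translation-based nonvanishing argument as in the paper; the rest of your outline (mountain-pass geometry, boundedness, sign via $(u^+)^{p-1}$, promotion to regularity via Theorems~\ref{H2_loc regularity theorem} and~\ref{Holder regularity result}) is consistent with what the paper does.
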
 

\begin{remark} \label{blue}
As previously observed, in the case $\rho \equiv 1$, $a=0$ and $b=1$, there is no solution for \eqref{PG} when $-\infty<q<2_\ast$, while there are solutions when $q=2_\ast$. By Theorem~\ref{Nonexistence1}, we conclude that the problem~\eqref{PG}, with $a\leq 0$ and $b>0$ admits no weak solution when $\gamma>1$, $p\in (1,2^\ast]$ and $q=2_\ast$. Therefore, the introduction of the weight function $\rho$ is precisely what makes the existence of solutions possible in the subcritical case and leads to nonexistence when $q=2_\ast$.
\end{remark}

\begin{remark}
It is worth to observe that considering the changes of variable 
 $
 u=v/{(1+x_N)} 
 $
(see \cite{Abreu-Furtado-Medeiros}) and using a straightforward computation we can see that problem \eqref{PG} with $a=0$ and $\rho(x_N)=(1+x_N)^2$,  is equivalent to the following problem with nonlinear Robin boundary condition
\begin{equation}\label{R2}
		\left\{
		\begin{aligned}
			\Delta v&=0 &\mbox{in }&\ \mathds{R}^N_+,
			\vspace{0.2cm}\\
			\frac{\partial v}{\partial \nu}+v&=b|v|^{q-2}v&\mbox{on }&\
			\mathds{R}^{N-1},
		\end{aligned}
		\right. 
	\end{equation}
which was studied in \cite{Abreu-JM-Medeiros}. In particular, from the results in \cite{Abreu-JM-Medeiros}, we infer that the solutions in this case decays as
$$
u(x^\prime,x_N)\leq \frac{c_1 e^{-c_2|x^\prime|}}{(1+x_N)(1+x_N^2)^{(N-2)/2}}, 
$$
for all $(x^\prime,x_N)\in \overline{\mathds{R}^N_+}$ and some positive constants $c_3,c_4$.
\end{remark}

In the following existence results, we consider weak solutions with cylindrical symmetry. In this case we have:

\begin{theorem}\label{Problem2-Existence} Let $a,b>0$ and suppose that $(\rho_0)$ holds with $\gamma>2$. Then problem \eqref{PG} has a nontrivial weak solution in $\mathcal{R}^{1,2}_\rho(\mathds{R}^N_+)$, whenever $p\in (2,2^\ast)$ and $q \in (2,2_\ast)$. Furthermore, if $\rho \in C^{1,\alpha}_{\mathrm{loc}}(0,\infty)$, then the weak solution  $u$ belongs to $C^{2,\alpha}_{\mathrm{loc}}(\mathds{R}^N_+)$ and is positive in $\mathds{R}^N_+$. The same can be obtained when
\begin{itemize}
    \item [$(i)$]  $p=2$, $q\in(2,2_\ast)$, $b>0$, $0<a<(\gamma-1)^2/4$;
    \item[$(ii)$] $ p\in(2,2^\ast)$, $q=2$, $a>0$, $0<b<(\gamma-1)/2$.
\end{itemize}
\end{theorem}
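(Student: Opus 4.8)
The plan is to obtain the solution by a minimax argument carried out on the space of cylindrically symmetric functions, where the compact embeddings of Theorem~\ref{Compact embedding} are available. To produce a \emph{nonnegative} solution I would first replace the nonlinearities by their positive truncations: writing $t^+=\max\{t,0\}$, set
\[
J(u)=\frac12\int_{\mathds{R}^N_+}\rho(x_N)|\nabla u|^2\,\mathrm dx-\frac{a}{p}\int_{\mathds{R}^N_+}(u^+)^p\,\mathrm dx-\frac{b}{q}\int_{\mathds{R}^{N-1}}(u^+)^q\,\mathrm dx',
\]
defined on $\mathcal R^{1,2}_\rho(\mathds R^N_+)$. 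Testing the Euler--Lagrange equation of $J$ with $u^-$ shows that any critical point of $J$ is nonnegative, hence a weak solution of \eqref{PG}; and by Palais' principle of symmetric criticality a critical point of $J$ restricted to $\mathcal R^{1,2}_\rho(\mathds R^N_+)$ is a critical point of $J$ on all of $\mathcal D^{1,2}_\rho(\mathds R^N_+)$. (Equivalently, one could minimise over the Nehari manifold of $J$ inside $\mathcal R^{1,2}_\rho(\mathds R^N_+)$.)

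Next I would verify the mountain--pass geometry of $J$ (with $J(0)=0$). In the main case $p\in(2,2^\ast)$, $q\in(2,2_\ast)$, the continuous embeddings contained in Theorem~\ref{Compact embedding} give $J(u)\ge\frac12\|u\|^2-C_1\|u\|^p-C_2\|u\|^q$, so since $p,q>2$ there exist $r,\alpha>0$ with $J\ge\alpha$ on $\{\|u\|=r\}$, while for any fixed $0\le w\in\mathcal R^{1,2}_\rho(\mathds R^N_+)\setminus\{0\}$ one has $J(tw)\to-\infty$ as $t\to\infty$ because the dominant negative term is a power $>2$. In case~$(i)$ (resp.~$(ii)$), where the interior (resp. boundary) exponent equals $2$, I would first use \eqref{Hardy p} with $p=2$ together with $(1+x_N)^{\gamma-2}\ge1$ for $\gamma\ge2$ to obtain $\frac{(\gamma-1)^2}{4}\int_{\mathds R^N_+}u^2\le\|u\|^2$ (resp. \eqref{Hardy} to obtain $\frac{\gamma-1}{2}\int_{\mathds R^{N-1}}u^2\le\|u\|^2$); under the smallness hypothesis $a<(\gamma-1)^2/4$ (resp. $b<(\gamma-1)/2$) the quadratic part of $J$ is then uniformly equivalent to $\|u\|^2$, and the same geometry follows, now with a superlinear perturbation only on the boundary (resp. in the interior).

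For compactness I would check the Palais--Smale condition at every level. A PS sequence $(u_n)$ is bounded: evaluating $J(u_n)-\frac1\theta J'(u_n)[u_n]$ with $\theta=\min\{p,q\}$ in the main case (resp. $\theta=q$ or $\theta=p$ in the special cases) and using $\theta>2$ together with the equivalent--norm observation above controls $\|u_n\|$. Passing to a subsequence, $u_n\rightharpoonup u$ in $\mathcal R^{1,2}_\rho(\mathds R^N_+)$, and by the \emph{compact} embeddings of Theorem~\ref{Compact embedding} we get $u_n\to u$ strongly in $L^p(\mathds R^N_+)$ and in $L^q(\mathds R^{N-1})$; inserting this into $(J'(u_n)-J'(u))[u_n-u]\to0$ yields $\|u_n-u\|\to0$. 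The Mountain Pass Theorem then produces a critical point $u$ with $J(u)\ge\alpha>0$, so $u\not\equiv0$; by the first paragraph $u$ is a nonnegative, nontrivial weak solution of \eqref{PG} in $\mathcal R^{1,2}_\rho(\mathds R^N_+)$. Finally, when $\rho\in C^{1,\alpha}_{\mathrm{loc}}(0,\infty)$ the last assertion of Theorem~\ref{Holder regularity result} (the case $\gamma\ge2$, $a,b>0$, $p\in[2,2^\ast)$, $q\in[2,2_\ast)$, which covers all three situations) upgrades $u$ to $C^{2,\alpha}_{\mathrm{loc}}(\mathds R^N_+)$ and, via the strong maximum principle, gives $u>0$ in $\mathds R^N_+$.

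The main obstacle is the Palais--Smale compactness: on the full space $\mathcal D^{1,2}_\rho(\mathds R^N_+)$ the relevant embeddings are only continuous, and PS sequences may lose mass at infinity (in the $x'$ and in the $x_N$ directions) or concentrate near a boundary point, so no compactness can be expected there; it is precisely the restriction to cylindrically symmetric functions, together with the hypothesis $\gamma>2$ that powers the compact embeddings of Theorem~\ref{Compact embedding}, that rescues the argument. A secondary technical point is that $\gamma>2$ is used twice --- once through those compact embeddings and once, in cases $(i)$--$(ii)$, through the sharp Hardy constants $(\gamma-1)^2/4$ and $(\gamma-1)/2$ --- and one must also check, routinely, that truncating the nonlinearities preserves the Ambrosetti--Rabinowitz structure used to bound the PS sequences.
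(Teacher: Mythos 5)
Your proposal is correct and follows essentially the same mountain--pass strategy on the cylindrically symmetric subspace that the paper uses, with the compact embeddings of Theorem~\ref{Compact embedding} providing the Palais--Smale compactness and the last assertion of Theorem~\ref{Holder regularity result} upgrading the weak solution to $C^{2,\alpha}_{\mathrm{loc}}(\mathds{R}^N_+)$ and positivity.

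Two points where you deviate, both arguably making the argument cleaner than the paper's writeup. First, to obtain a \emph{nonnegative} solution the paper uses the fact that $I(u)=I(|u|)$ together with Ekeland's variational principle (Remark~\ref{Nonnegative ps sequence}) to manufacture a nonnegative Palais--Smale sequence $(u_n)$ directly; you instead truncate the nonlinearities to $(u^+)^p$, $(u^+)^q$ and afterwards test the critical--point equation with $u^-$. Both work; the paper's route reuses an auxiliary remark it needs elsewhere, while yours is self-contained and standard. Second --- and this is the more substantive observation --- the paper's proof runs the minimax inside $\mathcal{R}^{1,2}_\rho(\mathds{R}^N_+)$, so $I'(u_n)\to 0$ is a priori only in the dual of $\mathcal{R}^{1,2}_\rho$, and then simply asserts that the weak limit is a weak solution against all test functions in $C^\infty_\delta(\mathds{R}^N_+)$. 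That step tacitly relies on Palais' principle of symmetric criticality (the $O(N-1)$-invariance of the energy in the $x'$-variables), which you invoke explicitly. Making this explicit is the rigorous way to close the gap between criticality on the symmetric subspace and criticality on all of $\mathcal{D}^{1,2}_\rho(\mathds{R}^N_+)$, and is a small but genuine improvement over the paper's exposition. Your treatment of cases $(i)$--$(ii)$ via the equivalent norms $\bigl(\|\cdot\|^2-a\|\cdot\|_{2,\mathds{R}^N_+}^2\bigr)^{1/2}$ and $\bigl(\|\cdot\|^2-b\|\cdot\|_{2,\mathds{R}^{N-1}}^2\bigr)^{1/2}$ under the smallness hypotheses matches the paper's remark preceding Theorem~\ref{Existence 3} exactly.
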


In the cases where $q=2$ and $p\in(2,2^\ast)$ or $p=2$ and $q\in(2,2_\ast)$, respectively, we work with the norms: $(\|\cdot\|^2-b\|\cdot\|_{2,\mathds{R}^{N-1}}^2)^{1/2}$ and $(\|\cdot\|^2-a\|\cdot\|_{2,\mathds{R}^N_+}^2)^{1/2}$, which are equivalent to $\|\cdot\|$ when $\gamma\geq 2$, $0<b<(\gamma-1)/2$, and $0<a<(\gamma-1)^2/4$, due to $\eqref{Hardy p}$. Finally, in the case $a>0$ and $b\leq 0$, our existence result is the following:

\begin{theorem}\label{Existence 3}
Assume condition $(\rho_0)$ with $\gamma > 2$, $a > 0$, $b \leq 0$, and $\max\{2,q\} < p < 2^\ast$ with $1<q < 2_\ast$. Then there exists a weak solution $u \in \mathcal{R}^{1,2}_\rho(\mathds{R}^N_+)\cap E_q(\mathds{R}^{N-1})$ to the problem \eqref{PG}. Moreover, if $\rho \in C^{1,\alpha}_{\mathrm{loc}}[0,\infty)$, then $C^{2,\alpha}_{\mathrm{loc}}(\overline{\mathds{R}^N_+})$ for some $\alpha \in (0,1)$, and $u > 0$ in $\overline{\mathds{R}^N_+}$.
\end{theorem}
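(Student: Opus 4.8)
The plan is to obtain the solution as a mountain‑pass critical point of a suitably truncated energy on the subspace of cylindrically symmetric functions, and then to promote it to a positive $C^{2,\alpha}_{\mathrm{loc}}$ solution by invoking Theorem~\ref{Holder regularity result}. Since $a>0$ and $p>\max\{2,q\}\ge 2$ the interior nonlinearity is superlinear and focusing, while $b\le 0$ gives the boundary term a favourable sign; accordingly I would truncate \emph{only} the interior nonlinearity and consider
\[
\widetilde I(u)=\frac{1}{2}\|u\|^2-\frac{b}{q}\int_{\mathds{R}^{N-1}}|u|^q\,\mathrm{d}x'-\frac{a}{p}\int_{\mathds{R}^N_+}(u^+)^p\,\mathrm{d}x ,
\]
on the space $X:=\mathcal{R}^{1,2}_\rho(\mathds{R}^N_+)\cap E_q(\mathds{R}^{N-1})$ with norm $\|u\|_X=\|u\|+\|u\|_{q,\mathds{R}^{N-1}}$. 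When $q\ge 2$ one has $E_q(\mathds{R}^{N-1})=\mathcal{D}^{1,2}_\rho(\mathds{R}^N_+)$, so $X=\mathcal{R}^{1,2}_\rho(\mathds{R}^N_+)$ with its Hilbert norm; when $1<q<2$, $X$ is a reflexive Banach space, being a closed subspace of $\mathcal{D}^{1,2}_\rho(\mathds{R}^N_+)\times L^q(\mathds{R}^{N-1})$. Using $\gamma>2$, $p\in(2,2^\ast)$, the embedding $\mathcal{D}^{1,2}_\rho(\mathds{R}^N_+)\hookrightarrow L^p(\mathds{R}^N_+)$ and the definition of $E_q(\mathds{R}^{N-1})$, one checks $\widetilde I\in C^1(X,\mathds{R})$. (I would treat $b<0$ in detail; the homogeneous‑Neumann case $b=0$ is simpler and is handled by working directly in $\mathcal{R}^{1,2}_\rho(\mathds{R}^N_+)$, or by a vanishing‑penalization limit from the $b<0$ problem.)

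Next I would verify the mountain‑pass geometry. Since $-b/q\ge 0$, on the set $\{u\in X:\|u\|=r\}$ one has $\widetilde I(u)\ge \frac{1}{2}r^2-Cr^p$, which is bounded below by some $\alpha>0$ for $r$ small because $p>2$; this set separates $0$ from infinity in $X$. Fixing a nonnegative cylindrically symmetric $v\in C^\infty_\delta(\mathds{R}^N_+)\setminus\{0\}$, the term $-\frac{a}{p}t^p\|v\|_{p,\mathds{R}^N_+}^p$ dominates as $t\to\infty$ (here $p>\max\{2,q\}$ and $a>0$), so $\widetilde I(tv)\to-\infty$ and $e:=t_0v$ with $t_0$ large lies below the pass. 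The mountain‑pass theorem then produces a Palais–Smale sequence $(u_n)\subset X$ at a level $c\ge\alpha>0$.

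The crucial step is the Palais–Smale condition. From the identity $\widetilde I(u_n)-\frac{1}{p}\langle\widetilde I'(u_n),u_n\rangle=(\frac{1}{2}-\frac{1}{p})\|u_n\|^2+|b|(\frac{1}{q}-\frac{1}{p})\|u_n\|_{q,\mathds{R}^{N-1}}^q$ and $q<p$, the sequence $(u_n)$ is bounded in $X$, so by reflexivity $u_n\rightharpoonup u$. By Theorem~\ref{Compact embedding} (applicable since $\gamma>2$, $p\in(2,2^\ast)$), $u_n\to u$ in $L^p(\mathds{R}^N_+)$, hence $\int_{\mathds{R}^N_+}\big((u_n^+)^{p-1}-(u^+)^{p-1}\big)(u_n-u)\to 0$. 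Then $\langle\widetilde I'(u_n)-\widetilde I'(u),u_n-u\rangle\to 0$ reduces to
\[
\|u_n-u\|^2-b\int_{\mathds{R}^{N-1}}\big(|u_n|^{q-2}u_n-|u|^{q-2}u\big)(u_n-u)\,\mathrm{d}x'\longrightarrow 0 ,
\]
and both summands on the left are nonnegative, since $-b\ge 0$ and $t\mapsto|t|^{q-2}t$ is nondecreasing. Therefore $\|u_n-u\|\to 0$ and, when $b<0$, the boundary integral tends to $0$; a standard Simon‑type inequality for $t\mapsto|t|^{q-2}t$ (combined, for $1<q<2$, with Hölder's inequality and the boundedness of $\|u_n\|_{q,\mathds{R}^{N-1}}$) then yields $\|u_n-u\|_{q,\mathds{R}^{N-1}}\to 0$. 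Hence $u_n\to u$ in $X$ and $\widetilde I$ satisfies $(PS)_c$.

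Consequently the mountain‑pass theorem gives a critical point $u\in X$ with $\widetilde I(u)=c>0$, in particular $u\not\equiv 0$. Since $\widetilde I$ is invariant under the $O(N-1)$‑action on the $x'$‑variable and $X$ is the corresponding fixed‑point subspace, the principle of symmetric criticality shows that $u$ is a critical point of $\widetilde I$ on $E_q(\mathds{R}^{N-1})$, i.e. a weak solution of the truncated problem; testing with $\varphi=-u^-$ gives $\|u^-\|^2+|b|\,\|u^-\|_{q,\mathds{R}^{N-1}}^q=0$, so $u\ge 0$, whence $(u^+)^p=u^p$ and $u$ is a nonnegative weak solution of \eqref{PG} belonging to $\mathcal{R}^{1,2}_\rho(\mathds{R}^N_+)\cap E_q(\mathds{R}^{N-1})$. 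Finally, if $\rho\in C^{1,\alpha}_{\mathrm{loc}}[0,\infty)$, case $(ii)$ of Theorem~\ref{Holder regularity result} applies verbatim ($\gamma>2\ge 2$, $a>0$, $b\le 0$, $p\in[2,2^\ast)$, $q\in(1,\infty)$, $u$ a nonnegative weak solution in $E_q(\mathds{R}^{N-1})$), giving $u\in C^{2,\alpha}_{\mathrm{loc}}(\overline{\mathds{R}^N_+})$ and $u>0$ in $\overline{\mathds{R}^N_+}$. The main obstacle is precisely the Palais–Smale verification in the $L^q(\mathds{R}^{N-1})$‑component when $1<q\le 2$: no compact trace embedding into $L^q(\mathds{R}^{N-1})$ is available in that range, so one cannot argue by compactness and must instead exploit the sign $b\le 0$ together with the strict monotonicity of $t\mapsto|t|^{q-2}t$ — which is also what dictates both the choice of the Banach space $X$ and the decision to truncate only the interior nonlinearity.
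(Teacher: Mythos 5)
Your strategy—mountain pass on $X=\mathcal{R}^{1,2}_\rho(\mathds{R}^N_+)\cap E_q(\mathds{R}^{N-1})$ with the mixed norm $\|\cdot\|+|b|\|\cdot\|_{q,\mathds{R}^{N-1}}$, bounded Palais--Smale sequence from $I-\frac1p I'$, compact $L^p(\mathds{R}^N_+)$-embedding from Theorem~\ref{Compact embedding}, and regularity via Theorem~\ref{Holder regularity result}(ii)—is exactly what the paper does, and your verification of the Palais--Smale condition is correct. There are two cosmetic deviations worth noting. First, to obtain a \emph{nonnegative} solution you truncate the interior nonlinearity and show $u\ge 0$ by testing with $-u^{-}$; the paper instead keeps $I$ intact and exploits $I(u)=I(|u|)$ together with Ekeland's variational principle applied to nonnegative paths (Remark~\ref{Nonnegative ps sequence}) to extract a nonnegative Palais--Smale sequence directly. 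Both devices are standard and equivalent here. Second, for the strong $L^q(\mathds{R}^{N-1})$-convergence of the Palais--Smale sequence—the genuinely delicate step, since there is no compact trace embedding into $L^q(\mathds{R}^{N-1})$ for $q$ close to $1$—you use the monotonicity of $t\mapsto|t|^{q-2}t$ and a Simon-type inequality, whereas the paper passes through the additivity of $\liminf$: from $\|u_n\|^2+|b|\|u_n\|_{q,\mathds{R}^{N-1}}^q\to\|u\|^2+|b|\|u\|_{q,\mathds{R}^{N-1}}^q$ and weak lower semicontinuity of each summand it deduces $\|u_n\|\to\|u\|$ and $\|u_n\|_{q,\mathds{R}^{N-1}}\to\|u\|_{q,\mathds{R}^{N-1}}$ separately, then upgrades to norm convergence. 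Your monotone-operator route is slightly more direct and has the advantage of working uniformly in $1<q<2$ without invoking uniform convexity of $L^q(\mathds{R}^{N-1})$; the paper's lower-semicontinuity trick is shorter when one is content with the Hilbert component. In short: same proof, two interchangeable implementations of the compactness step.
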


\begin{remark}\label{N=2} For the case $N=2$, it was proved in \cite[Proposition 6.1]{Chipot-Fila-Shafir1} that if 
\begin{equation}\label{Problem N=2}
			\left\{
			\begin{aligned}
				\Delta u&\leq 0&\quad&\text{ in }\quad\mathds{R}^2_+,\\
				\frac{\partial u}{\partial x_N}&\leq 0&\quad&\text{ on }\quad\mathds{R},\\
                u&\geq0&\quad& \text{ in }\quad\mathds{R}^2_+,
			\end{aligned}
			\right.
		\end{equation}
 then $u$ is constant. To treat \eqref{PG} with $N=2$, we can consider the Sobolev inequality
\begin{equation*}
    \left(\int_{\mathds{R}^2_+}\frac{|u|^p}{(1+x_N)^{2-\gamma}}\mathrm{d}x\right)^{2/p}\leq C\int_{\mathds{R}^2_+}(1+x_N)^\gamma|\nabla u|^2\mathrm{d}x,
\end{equation*}
for $\gamma>1$ and $p \in [2,\infty)$(see \cite[Theorem 2.12]{Do-Freire-Medeiros}). By combining this inequality with the approach developed in \cite[Theorem 1.5]{AFM}, we can derive the following trace inequality:
\begin{equation*}
    \left(\int_{\mathds{R}}|u|^q\mathrm{d}x^\prime\right)^{2/q}\leq C\int_{\mathds{R}^2_+}(1+x_N)^\gamma|\nabla u|^2\mathrm{d}x, 
\end{equation*}
for $\gamma>1$ and $q\in [2,\infty)$. With these inequalities, we can establish a version of the embeddings proved in Lemma~\ref{embedding} for the case $N=2$, and apply the same strategy used in Theorems~\ref{Problem1-Existence}, \ref{Problem2-Existence}, and~\ref{Existence 3} to obtain the existence of weak solutions in the corresponding situation for dimension $2$. In particular, the cases $a= 0$, $b>0$ with $\gamma>1$ and $a>0$, $b=0$ with $\gamma>2$ in \eqref{PG} will stand in contrast to the result for \eqref{Problem N=2}.
\end{remark}

\subsection{Pohozaev identity and nonexistence results} As is well known, Pohožaev-type identities provide necessary conditions for the existence of solutions to elliptic problems, making them a powerful tool for establishing nonexistence results. Such identities have been explored in various contexts: see \cite{Berestycki-Lions, Esteban-Lions, Pohozaev} for Laplacian problems in bounded domains, the whole space, and domains with a "half-space" structure. For problems involving the \( p \)-Laplacian, see \cite{CAOM, Guedda-Veron, Ilyasov-Takac, Liu-Liu}; see also \cite{Pucci-Serrin} for a more general quasilinear framework.

To prove our Pohozaev-type identity and the corresponding nonexistence results, we assume the following additional condition on the weight function $\rho$:
\begin{enumerate}
\item [$(\rho_1)$]$\rho \in C^1[0,\infty)$ and there exists a constant $c_1>0$ such that 
$$
0< \rho^\prime(s)s\leq c_1 \rho(s), \quad s>0.
$$
\end{enumerate}
Based on some ideas presented in \cite{CAOM, Ilyasov-Takac}, we prove the following:

    \begin{theorem}[Pohozaev identity]\label{Pohozaev} Assume $(\rho_1)$ and let $f,g:\mathds{R}\longrightarrow\mathds{R}$ be continuous functions. If $u\in  H^2_{\mathrm{loc}}(\overline{\mathds{R}^N_+})$ is a weak solution of the problem
		\begin{align*}
			\left\{
			\begin{aligned}
				-\mathrm{div}(\rho(x_N) \nabla u)&=f(u)&\quad&\text{ in }\quad\mathds{R}_{+}^{N},\\
				\frac{\partial u}{\partial \nu}&=g(u)&\quad&\text{ on }\quad\mathds{R}^{N-1},
			\end{aligned}
			\right.
		\end{align*}
		satisfying $\rho(x_N)|\nabla u|^2\in L^1(\mathds{R}^N_+)$, $F(u)\in L^1(\mathds{R}^N_+)$, and $G(u)\in L^1(\mathds{R}^{N-1})$  where 
		\[
		F(t)=\int_0^tf(s)ds, \ \ \mbox{ and }\ \ G(t)=\int_0^tg(s)ds,
		\]
		then, $u$ satisfies the following Pohozaev type identity
		\[
		\frac{N-2}{2}\int_{\mathds{R}_{+}^{N}}\rho(x_N)|\nabla u|^2\mathrm{d}x+\frac{1}{2}\int_{\mathds{R}_{+}^{N}}\rho'(x_N)x_N|\nabla u|^2\mathrm{d}x=N\int_{\mathds{R}^N_+}F(u)\mathrm{d}x+(N-1)\int_{\mathds{R}^{N-1}}G(u)\mathrm{d}x^\prime.
		\]
	\end{theorem}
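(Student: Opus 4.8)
The plan is to test the weak equation against the localized Pohozaev multiplier $\psi_R\,(x\cdot\nabla u)$ and let the cut-off scale $R\to\infty$, using the three $L^1$-hypotheses together with $(\rho_1)$ to kill the error terms. Before that I would cash in the $H^2_{\mathrm{loc}}$-regularity. Since $\rho\in C^1$ and $\nabla u\in H^1_{\mathrm{loc}}$, the field $\rho(x_N)\nabla u$ lies in $H^1_{\mathrm{loc}}(\overline{\mathds{R}^N_+};\mathds{R}^N)$, so testing the weak formulation with interior cut-offs gives $-\mathrm{div}(\rho(x_N)\nabla u)=f(u)$ a.e.\ in $\mathds{R}^N_+$, and then, by the arbitrariness of the boundary values of the test function, $\partial u/\partial\nu=g(u)$ in the trace sense on $\mathds{R}^{N-1}$. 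In particular $f(u)=-\mathrm{div}(\rho\nabla u)\in L^2_{\mathrm{loc}}(\mathds{R}^N_+)$ and $g(u)\in L^2_{\mathrm{loc}}(\mathds{R}^{N-1})$, whence $F(u)\in W^{1,1}_{\mathrm{loc}}(\mathds{R}^N_+)$ with $\nabla F(u)=f(u)\nabla u$, $G(u)\in W^{1,1}_{\mathrm{loc}}(\mathds{R}^{N-1})$ with $\nabla' G(u)=g(u)\nabla' u$, and $|\nabla u|^2\in W^{1,1}_{\mathrm{loc}}(\mathds{R}^N_+)$ with $\nabla(|\nabla u|^2)=2(D^2u)\nabla u$. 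These facts make all the integrations by parts below legitimate on bounded subsets.

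Fix $\psi\in C^\infty_c(\mathds{R}^N)$ with $\psi\equiv1$ on $B_1$ and $\mathrm{supp}\,\psi\subset B_2$, put $\psi_R(x):=\psi(x/R)$, and use $v_R:=\psi_R\,(x\cdot\nabla u)$, which belongs to $H^1(\mathds{R}^N_+)$ with support compact in $\overline{\mathds{R}^N_+}$, as test function. Multiplying the strong equation by $v_R$ and integrating by parts (recall $\rho(0)=1$) gives $\int_{\mathds{R}^N_+}\rho\,\nabla u\cdot\nabla v_R=\int_{\mathds{R}^N_+}f(u)v_R+\int_{\mathds{R}^{N-1}}g(u)v_R$. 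On the right, using $x\cdot\nabla F(u)=\mathrm{div}(xF(u))-NF(u)$ and that $x\cdot\nu=-x_N=0$ on $\{x_N=0\}$, one gets $\int_{\mathds{R}^N_+}f(u)v_R=-N\int_{\mathds{R}^N_+}\psi_R F(u)-\int_{\mathds{R}^N_+}(x\cdot\nabla\psi_R)F(u)$; since $x\cdot\nabla u=x'\cdot\nabla' u$ on $\{x_N=0\}$, the analogous computation on $\mathds{R}^{N-1}$ gives $\int_{\mathds{R}^{N-1}}g(u)v_R=-(N-1)\int_{\mathds{R}^{N-1}}\psi_R G(u)-\int_{\mathds{R}^{N-1}}(x'\cdot\nabla'\psi_R)G(u)$. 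On the left, $\rho\,\nabla u\cdot\nabla v_R=\rho\,(x\cdot\nabla u)(\nabla u\cdot\nabla\psi_R)+\psi_R\rho|\nabla u|^2+\tfrac12\psi_R\rho\,x\cdot\nabla(|\nabla u|^2)$ because $\nabla u\cdot(D^2u)x=\tfrac12\,x\cdot\nabla(|\nabla u|^2)$, and integrating the last summand by parts with $\mathrm{div}(\psi_R\rho\,x)=(x\cdot\nabla\psi_R)\rho+\psi_R\rho'(x_N)x_N+N\psi_R\rho$ (again the boundary term carries the vanishing factor $x\cdot\nu$) turns the left-hand side into
\[
\int_{\mathds{R}^N_+}\rho\,(x\cdot\nabla u)(\nabla u\cdot\nabla\psi_R)+\int_{\mathds{R}^N_+}\psi_R\rho|\nabla u|^2-\frac12\int_{\mathds{R}^N_+}\big[(x\cdot\nabla\psi_R)\rho+\psi_R\rho'(x_N)x_N+N\psi_R\rho\big]|\nabla u|^2 .
\]

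Now let $R\to\infty$. By $(\rho_1)$ one has $0\le\rho'(x_N)x_N|\nabla u|^2\le c_1\,\rho(x_N)|\nabla u|^2\in L^1(\mathds{R}^N_+)$, so, combined with the hypotheses $\rho|\nabla u|^2,\,F(u)\in L^1(\mathds{R}^N_+)$ and $G(u)\in L^1(\mathds{R}^{N-1})$, the dominated convergence theorem handles every term carrying a bare $\psi_R$; every term carrying $\nabla\psi_R$ (resp.\ $\nabla'\psi_R$) is supported in the annulus $\{R\le|x|\le2R\}$ and bounded there by a fixed multiple of one of these $L^1$-functions — for the gradient term one uses $|x\cdot\nabla u|\,|\nabla\psi_R|\le 2\|\nabla\psi\|_\infty|\nabla u|$ — hence tends to $0$. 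Passing to the limit and equating the two sides gives
\[
\int_{\mathds{R}^N_+}\rho|\nabla u|^2-\frac12\int_{\mathds{R}^N_+}\rho'(x_N)x_N|\nabla u|^2-\frac N2\int_{\mathds{R}^N_+}\rho|\nabla u|^2=-N\int_{\mathds{R}^N_+}F(u)-(N-1)\int_{\mathds{R}^{N-1}}G(u),
\]
and rearranging yields exactly the claimed Pohozaev identity.

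The genuinely delicate point is the regularity bookkeeping that makes $v_R$ an admissible multiplier: one must check carefully that $u\in H^2_{\mathrm{loc}}(\overline{\mathds{R}^N_+})$ really licenses the interior integration by parts against the $H^1$-field $\rho(x_N)\nabla u$, the trace identification $\partial u/\partial\nu=g(u)$ on $\mathds{R}^{N-1}$, and the chain rules for $F(u)$, $G(u)$ and $|\nabla u|^2$ used above (alternatively, one may first invoke the preceding regularity results to reduce to a classical $C^{2,\alpha}_{\mathrm{loc}}$ solution, for which every manipulation is transparent). Once this is settled, what remains is the two integrations by parts and the dominated-convergence passage just described.
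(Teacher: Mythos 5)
Your proposal is correct and follows essentially the same route as the paper: the Pohozaev multiplier $\psi_R\,(x\cdot\nabla u)$, the vector-calculus identity $\nabla u\cdot(D^2u)\,x=\tfrac12\,x\cdot\nabla|\nabla u|^2$, the vanishing of $x\cdot\nu$ on the flat boundary, the bound $\rho'(x_N)x_N\le c_1\rho(x_N)$ from $(\rho_1)$, and dominated convergence as $R\to\infty$. The only organizational difference is that the paper packages the computation as the divergence of an explicit vector field $V_R$ and first performs everything for smooth $u$, approximating in $H^2(\Omega_R)$, whereas you test the strong form directly against $v_R$ and appeal to chain-rule facts for $F(u)$, $G(u)$, $|\nabla u|^2$; you flag this regularity bookkeeping as the delicate point, which is exactly what the paper's smooth-approximation step handles.
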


As an application of our Pohozaev identity, we obtain the following nonexistence result:

\begin{theorem}\label{Nonexistence1}
Assume $(\rho_0)$ and $(\rho_1)$. Then $u\equiv 0$ in the following cases:
\begin{itemize}
\item[$(i)$] $u$ is a nonnegative weak solution in $E_q(\mathds{R}^{N-1})$, with $\gamma > 0$, $a > 0$, $b \leq 0$, $p = 2^\ast$, and $q \in (1, 2_\ast]$;
\item[$(ii)$] $u$ is a nonnegative weak solution in $E_p(\mathds{R}^N_+)$, with $\gamma > 1$, $a \leq 0$, $b > 0$, $p \in (1, 2^\ast]$, and $q = 2_\ast$.
\end{itemize}
\end{theorem}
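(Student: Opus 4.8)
The plan is to derive both nonexistence statements by combining the Pohozaev identity of Theorem~\ref{Pohozaev} with the corresponding energy identity obtained by testing the equation against $u$ itself. For case $(i)$ we have $f(u)=a|u|^{p-2}u=au^{2^\ast-1}$ (since $u\ge 0$) and $g(u)=b|u|^{q-2}u$ with $b\le 0$, so $F(u)=\frac{a}{2^\ast}u^{2^\ast}$ and $G(u)=\frac{b}{q}u^{q}$. First I would check the integrability hypotheses of Theorem~\ref{Pohozaev}: $\rho(x_N)|\nabla u|^2\in L^1(\mathds{R}^N_+)$ is immediate since $u\in\mathcal D^{1,2}_\rho(\mathds{R}^N_+)$; $F(u)\in L^1$ follows because $u\in E_{2^\ast}(\mathds{R}^N_+)=\mathcal D^{1,2}_\rho(\mathds{R}^N_+)$ by the embedding lemma (for $\gamma\ge 2$, and for $\gamma\ge 0$ directly from $p=2^\ast$ as in the hypotheses); and $G(u)\in L^1(\mathds{R}^{N-1})$ follows from $u\in E_q(\mathds{R}^{N-1})$. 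The $H^2_{\mathrm{loc}}$-regularity needed to apply Theorem~\ref{Pohozaev} is exactly what Theorem~\ref{H2_loc regularity theorem} provides in these parameter ranges, so I would invoke it to justify that $u$ is an admissible solution for the Pohozaev identity.

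Next I would write the two identities side by side. Testing with $\varphi=u$ (which is legitimate once the relevant integrals converge, approximating $u$ by $C^\infty_\delta$ functions) gives
\begin{equation*}
\int_{\mathds{R}^N_+}\rho(x_N)|\nabla u|^2\,\mathrm dx = a\int_{\mathds{R}^N_+}u^{p}\,\mathrm dx + b\int_{\mathds{R}^{N-1}}u^{q}\,\mathrm dx',
\end{equation*}
while Pohozaev yields
\begin{equation*}
\frac{N-2}{2}\int_{\mathds{R}^N_+}\rho(x_N)|\nabla u|^2\,\mathrm dx+\frac12\int_{\mathds{R}^N_+}\rho'(x_N)x_N|\nabla u|^2\,\mathrm dx = \frac{Na}{p}\int_{\mathds{R}^N_+}u^{p}\,\mathrm dx+\frac{(N-1)b}{q}\int_{\mathds{R}^{N-1}}u^{q}\,\mathrm dx'.
\end{equation*}
Multiplying the first identity by $\frac{N-2}{2}$ and subtracting from the second eliminates the $\nabla u$ term on the left up to the nonnegative remainder $\frac12\int\rho'(x_N)x_N|\nabla u|^2\,\mathrm dx\ge0$ (this is where $(\rho_1)$ enters, since $\rho'(s)s>0$). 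For case $(i)$, with $p=2^\ast$ one has $\frac{N}{p}=\frac{N-2}{2}$, so the $\int u^{p}$ terms cancel exactly, leaving
\begin{equation*}
\frac12\int_{\mathds{R}^N_+}\rho'(x_N)x_N|\nabla u|^2\,\mathrm dx = b\Bigl(\frac{N-1}{q}-\frac{N-2}{2}\Bigr)\int_{\mathds{R}^{N-1}}u^{q}\,\mathrm dx'.
\end{equation*}
Since $q\le 2_\ast=\frac{2(N-1)}{N-2}$, the coefficient $\frac{N-1}{q}-\frac{N-2}{2}\ge0$, and $b\le0$, the right-hand side is $\le0$ while the left-hand side is $\ge0$; hence both vanish. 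From $\int\rho'(x_N)x_N|\nabla u|^2\,\mathrm dx=0$ together with $\rho'(s)s>0$ for $s>0$ we get $\nabla u=0$ a.e.\ in $\mathds{R}^N_+$, so $u$ is constant, and being in $\mathcal D^{1,2}_\rho$ forces $u\equiv0$.

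Case $(ii)$ is the mirror image: here $a\le0$, $b>0$, $q=2_\ast$, so the boundary terms cancel exactly ($\frac{N-1}{q}=\frac{N-2}{2}$) and one is left with
\begin{equation*}
\frac12\int_{\mathds{R}^N_+}\rho'(x_N)x_N|\nabla u|^2\,\mathrm dx = a\Bigl(\frac{N}{p}-\frac{N-2}{2}\Bigr)\int_{\mathds{R}^N_+}u^{p}\,\mathrm dx;
\end{equation*}
for $p\le 2^\ast$ the bracket is $\ge0$ and $a\le0$, so again the right side is $\le0$ and the left side $\ge0$, giving $\nabla u\equiv0$ and hence $u\equiv0$. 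The integrability check here uses $u\in E_p(\mathds{R}^N_+)$ for $F(u)\in L^1$, $u\in\mathcal D^{1,2}_\rho\hookrightarrow L^{2_\ast}(\mathds{R}^{N-1})$ (valid for $\gamma>1$) for $G(u)\in L^1$, and Theorem~\ref{H2_loc regularity theorem}$(ii)$ for the $H^2_{\mathrm{loc}}$ regularity. The main obstacle I anticipate is not the algebra, which is forced once the exponents are critical, but rather the justification that $\varphi=u$ is an admissible test function and that the Pohozaev hypotheses genuinely hold in the endpoint cases $\gamma>0$ with $p=2^\ast$ (where $\mathcal D^{1,2}_\rho$ need not embed into $L^{2^\ast}(\mathds{R}^N_+)$ for the full solution but does control $u^{2^\ast}$ through the weighted gradient via the trace/Sobolev structure); care is needed to ensure all integrals in both identities are finite before subtracting, and to confirm that $E_q(\mathds{R}^{N-1})$ resp.\ $E_p(\mathds{R}^N_+)$ membership is exactly what supplies the missing finiteness. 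Once that bookkeeping is in place, the sign analysis closes the argument.
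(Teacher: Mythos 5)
Your proposal is correct and follows essentially the same route as the paper: invoke Theorem~\ref{H2_loc regularity theorem} for $H^2_{\mathrm{loc}}$-regularity, apply the Pohozaev identity of Theorem~\ref{Pohozaev}, subtract $\tfrac{N-2}{2}$ times the energy identity obtained by testing with $u$, and use the sign of $\rho'(x_N)x_N$ together with the criticality of the exponent to force $u\equiv 0$. The only minor remark is that your worry about the endpoint case $\gamma>0$, $p=2^\ast$ is not actually an obstacle: $(\rho_0)$ gives $\rho\ge 1$, so $\mathcal D^{1,2}_\rho(\mathds{R}^N_+)\hookrightarrow\mathcal D^{1,2}(\mathds{R}^N_+)\hookrightarrow L^{2^\ast}(\mathds{R}^N_+)$ directly by the classical Sobolev inequality, so $F(u)\in L^1$ is immediate and no further care is needed there.
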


\begin{theorem}\label{Nonexistence2}Assume $(\rho_0)$ with $\gamma>0$ and $(\rho_1)$. Let $u\in \mathcal{D}^{1,2}_\rho(\mathds{R}^N_+)\cap H^2_{\mathrm{loc}}(\overline{\mathds{R}^N_+})$ be a nonnegative weak solution of \eqref{PG}. Then $u\equiv 0$ in the following cases:
\begin{itemize}
\item[$(i)$] $u\in E_q(\mathds{R}^{N-1})$, $a > 0$, $b \leq 0$, and $p = 2^\ast$ and $q\in(1,2_\ast]$;
\item[$(ii)$] $u\in E_p(\mathds{R}^N_+)$, $a \leq 0$, $b > 0$, $p\in(1,2^\ast]$ and $q = 2_\ast$;
\item [$(iii)$] $a>0$, $b>0$, $p= 2^\ast$ and $q= 2_\ast$.
\end{itemize}
\end{theorem}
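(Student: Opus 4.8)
The plan is to combine the Pohozaev identity of Theorem~\ref{Pohozaev} with the Nehari-type identity obtained by taking $u$ itself as a test function in the weak formulation \eqref{weak solution}. Since the outward unit normal on $\mathds{R}^{N-1}=\partial\mathds{R}^N_+$ is $\nu=-e_N$, the boundary condition in \eqref{PG} reads $\partial u/\partial\nu=-\partial u/\partial x_N=b|u|^{q-2}u$, so \eqref{PG} has exactly the form treated in Theorem~\ref{Pohozaev} with $f(t)=a|t|^{p-2}t$ and $g(t)=b|t|^{q-2}t$; hence $F(t)=\frac{a}{p}|t|^{p}$ and $G(t)=\frac{b}{q}|t|^{q}$.

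Before invoking Theorem~\ref{Pohozaev} I would verify its integrability hypotheses. From $u\in\mathcal{D}^{1,2}_\rho(\mathds{R}^N_+)$ one has $\rho(x_N)|\nabla u|^2\in L^1(\mathds{R}^N_+)$; moreover $(\rho_0)$ forces $\rho\ge 1$, so $\int_{\mathds{R}^N_+}|\nabla u|^2\le\|u\|^2<\infty$, and the classical Sobolev embedding together with the trace inequality \eqref{classical trace embedding} give $u\in L^{2^\ast}(\mathds{R}^N_+)\cap L^{2_\ast}(\mathds{R}^{N-1})$. Together with $u\in E_q(\mathds{R}^{N-1})$ in case $(i)$, $u\in E_p(\mathds{R}^N_+)$ in case $(ii)$, and nothing further in case $(iii)$ (where $p=2^\ast$, $q=2_\ast$), this yields $F(u)\in L^1(\mathds{R}^N_+)$ and $G(u)\in L^1(\mathds{R}^{N-1})$. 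Thus Theorem~\ref{Pohozaev} applies, and writing $A:=\int_{\mathds{R}^N_+}\rho(x_N)|\nabla u|^2\,\mathrm{d}x$ and $B:=\int_{\mathds{R}^N_+}\rho'(x_N)x_N|\nabla u|^2\,\mathrm{d}x$ (with $B\ge 0$ by $(\rho_1)$), it reads
\[
\frac{N-2}{2}A+\frac{1}{2}B=\frac{Na}{p}\int_{\mathds{R}^N_+}|u|^{p}\,\mathrm{d}x+\frac{(N-1)b}{q}\int_{\mathds{R}^{N-1}}|u|^{q}\,\mathrm{d}x'.
\]
To obtain the Nehari identity I would approximate $u$ by $\varphi_n\in C^\infty_\delta(\mathds{R}^N_+)$ in the norm of $E_q(\mathds{R}^{N-1})$, $E_p(\mathds{R}^N_+)$, or $\mathcal{D}^{1,2}_\rho(\mathds{R}^N_+)$ according to the case, and use $\rho\ge 1$ to upgrade this to convergence in the unweighted $\mathcal{D}^{1,2}(\mathds{R}^N_+)$, hence in $L^{2^\ast}(\mathds{R}^N_+)$ and $L^{2_\ast}(\mathds{R}^{N-1})$; passing to the limit in \eqref{weak solution} with $\varphi=\varphi_n$ then gives
\[
A=a\int_{\mathds{R}^N_+}|u|^{p}\,\mathrm{d}x+b\int_{\mathds{R}^{N-1}}|u|^{q}\,\mathrm{d}x'.
\]

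The conclusion is then a short computation based on the identities $\tfrac{N}{2^\ast}=\tfrac{N-2}{2}=\tfrac{N-1}{2_\ast}$. Put $P:=a\int_{\mathds{R}^N_+}|u|^{p}\,\mathrm{d}x$ and $Q:=b\int_{\mathds{R}^{N-1}}|u|^{q}\,\mathrm{d}x'$, so that $A=P+Q$, and subtract $\tfrac{N-2}{2}$ times this Nehari identity from the Pohozaev identity. In case $(i)$, since $p=2^\ast$ we obtain $\tfrac{1}{2}B=\big(\tfrac{N-1}{q}-\tfrac{N-2}{2}\big)Q$, where the bracket is $\ge 0$ because $q\le 2_\ast$ and $Q\le 0$ because $b\le 0$. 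In case $(ii)$, since $q=2_\ast$ we obtain $\tfrac{1}{2}B=\big(\tfrac{N}{p}-\tfrac{N-2}{2}\big)P$, where the bracket is $\ge 0$ because $p\le 2^\ast$ and $P\le 0$ because $a\le 0$. In case $(iii)$, since $p=2^\ast$ and $q=2_\ast$ we obtain $\tfrac{1}{2}B=0$ directly. In every case $B\le 0$, and combined with $B\ge 0$ this forces $B=0$. By $(\rho_1)$ we have $\rho'(s)s>0$ for all $s>0$, so the nonnegative integrand $\rho'(x_N)x_N|\nabla u|^2$ must vanish a.e., i.e.\ $\nabla u=0$ a.e.\ in $\mathds{R}^N_+$; hence $u$ is a.e.\ equal to a constant, and since $u\in L^{2^\ast}(\mathds{R}^N_+)$ while $\mathds{R}^N_+$ has infinite measure, that constant is $0$. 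Therefore $u\equiv 0$.

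The main obstacle, as I see it, is not the algebra but the two \quotes{bridging} steps: confirming the $L^1$-integrability hypotheses of Theorem~\ref{Pohozaev} in the critical regimes, and justifying that $u$ is an admissible test function in \eqref{weak solution}. Both reduce, via the elementary observation that $(\rho_0)$ implies $\rho\ge 1$, to borrowing the sharp unweighted Sobolev and trace embeddings into $L^{2^\ast}(\mathds{R}^N_+)$ and $L^{2_\ast}(\mathds{R}^{N-1})$; the auxiliary spaces $E_q(\mathds{R}^{N-1})$ and $E_p(\mathds{R}^N_+)$ are introduced precisely to supply the missing integrability in the subcritical direction when only one of $p,q$ equals the critical exponent.
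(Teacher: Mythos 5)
Your proposal is correct and follows the same route as the paper: apply Theorem~\ref{Pohozaev} with $F(t)=\tfrac{a}{p}|t|^p$, $G(t)=\tfrac{b}{q}|t|^q$, combine with the Nehari identity obtained by testing with $u$, and use the sign of $\int_{\mathds{R}^N_+}\rho'(x_N)x_N|\nabla u|^2\,\mathrm{d}x$ forced by $(\rho_1)$. You are in fact slightly more explicit than the paper on two minor points that it leaves implicit, namely the density argument justifying $u$ as an admissible test function and the final step that $B=0$ forces $\nabla u\equiv 0$, hence $u$ is a constant which must be $0$ since $u\in L^{2^\ast}(\mathds{R}^N_+)$ and $\mathds{R}^N_+$ has infinite measure.
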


\begin{remark}
In item $(iii)$, we encounter the case where the perturbation introduced by the weight function $\rho$ leads to nonexistence for problem \eqref{critical equation}, despite the existence and classification results previously established for it.
\end{remark}

Inspired by the present work, we propose several potential directions for future research:

\begin{itemize}
\item Our results establish existence and nonexistence of solutions based on the newly developed embeddings derived from inequality \eqref{Hardy p}. However, certain interesting scenarios remain to be explored. For example, a promising direction would be to analyze the problem \eqref{PG} under conditions $a \leq 0$, $b > 0$, $q \in (2, 2_\ast)$, and $\gamma \in (0,1]$. Part of the difficulties of the problem arises from the fact that for $\rho(x_N)=(1+x_N)^\gamma$ with $\gamma<1$, $S_{2,\mathds{R}^{N-1}}=0$ (see \cite[Theorem 2]{Do-Freire-Medeiros 2}).

\item Another open case arises when $a, b > 0$ and either $p$ or $q$ reaches the critical exponent; that is, when $p = 2^\ast$ and $q \in (1, 2_\ast)$, or $p \in (1, 2^\ast)$ and $q = 2_\ast$.
\end{itemize}

The paper is organized as follows. In Section 2, we present the functional setting, the key embeddings and the attainability results. Section 3 is dedicated to proving the regularity of nonnegative weak solutions, culminating in Theorems \ref{Holder regularity result} and \ref{H2_loc regularity theorem}. In Section 4, we present the main existence results using a variational approach. Finally, in Section 5, we prove the Pohozaev-type identity and use it to establish the nonexistence results. 
\bigskip 

%
%

\section{Preliminary results}

The main purpose of this section is to establish the embedding results that are necessary to apply our approach to the problem \eqref{PG} and prove the Theorems \ref{best constant}, \ref{Compact embedding} and \ref{Attainability}.

\subsection{Proof of Theorem \ref{best constant}:}

\begin{lemma}\label{embedding}
Assume $(\rho_0)$ with $\gamma>1$. Then, the following embedding holds
$$
\mathcal{D}_\rho^{1,2}(\mathds{R}^N_+)\hookrightarrow L^q(\mathds{R}^{N-1})\quad \mbox{for all}\quad q\in[2,2_\ast]. 
$$

Furthermore, if $\gamma\geq 2$, then the following embedding holds
$$
\mathcal{D}_\rho^{1,2}(\mathds{R}^N_+)\hookrightarrow H^1(\mathds{R}^N_+)\hookrightarrow L^p(\mathds{R}^{N}_+)\quad \mbox{for all}\quad p \in [2,2^\ast].
$$
\end{lemma}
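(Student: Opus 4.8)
The plan is to derive both chains of embeddings from the Hardy--Sobolev inequality \eqref{Hardy p} with $p=2$, namely
\[
C_{2,\gamma}^2 \int_{\mathds{R}_{+}^N}\frac{|u|^2}{(1+x_N)^{2-\gamma}}\, \mathrm{d}x + C_{2,\gamma}\int_{\mathds{R}^{N-1}}|u|^2\, \mathrm{d}x' \leq \int_{\mathds{R}_{+}^N}(1+x_N)^\gamma |\nabla u|^2 \, \mathrm{d}x,
\]
valid for $u\in C^\infty_\delta(\mathds{R}^N_+)$ when $\gamma>1$, combined with the assumption $(\rho_0)$, which guarantees $(1+x_N)^\gamma\le\rho(x_N)$ and hence $\int (1+x_N)^\gamma|\nabla u|^2\le \|u\|^2$. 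Throughout, it suffices to prove the inequalities for $u\in C^\infty_\delta(\mathds{R}^N_+)$ and pass to the closure $\mathcal{D}^{1,2}_\rho(\mathds{R}^N_+)$ by density.

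\emph{Trace embedding, $\gamma>1$.} The case $q=2$ is immediate: dropping the first (nonnegative) term in \eqref{Hardy p} and using $(\rho_0)$ gives $C_{2,\gamma}\|u\|_{2,\mathds{R}^{N-1}}^2\le\|u\|^2$. For $q=2_\ast$ I would invoke the classical Sobolev trace inequality \eqref{classical trace embedding}; since $\mathcal{D}^{1,2}_\rho(\mathds{R}^N_+)\subset\mathcal{D}^{1,2}(\mathds{R}^N_+)$ when $\gamma\ge0$ (again by $(\rho_0)$, as $|\nabla u|^2\le(1+x_N)^\gamma|\nabla u|^2$ on $\{x_N\ge 0\}$ — here one uses $\rho(0)=1$ and monotonicity is not even needed, only $(1+x_N)^\gamma\ge 1$), we get $S_\partial\|u\|_{2_\ast,\mathds{R}^{N-1}}^2\le\int|\nabla u|^2\le\|u\|^2$. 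The intermediate range $q\in(2,2_\ast)$ then follows by interpolation in $L^q(\mathds{R}^{N-1})$ between $L^2$ and $L^{2_\ast}$: writing $\tfrac1q=\tfrac{\theta}{2}+\tfrac{1-\theta}{2_\ast}$ and using $\|u\|_{q}\le\|u\|_2^{\theta}\|u\|_{2_\ast}^{1-\theta}$ with the two bounds just obtained. This reproduces \eqref{Hardy-L}.

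\emph{Interior embedding, $\gamma\ge2$.} Here I want $\mathcal{D}^{1,2}_\rho(\mathds{R}^N_+)\hookrightarrow H^1(\mathds{R}^N_+)$. Control of $\|\nabla u\|_{2,\mathds{R}^N_+}$ is free from $(\rho_0)$ since $\gamma\ge0$. For the $L^2(\mathds{R}^N_+)$-norm of $u$ itself, take $\gamma\ge2$ so that $2-\gamma\le0$, hence $(1+x_N)^{2-\gamma}\le1$ and therefore $|u|^2\le\dfrac{|u|^2}{(1+x_N)^{2-\gamma}}$ pointwise on $\mathds{R}^N_+$; plugging this into \eqref{Hardy p} and dropping the boundary term yields $C_{2,\gamma}^2\|u\|_{2,\mathds{R}^N_+}^2\le\int(1+x_N)^\gamma|\nabla u|^2\le\|u\|^2$. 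Combining, $\|u\|_{H^1(\mathds{R}^N_+)}\lesssim\|u\|$, which gives $\mathcal{D}^{1,2}_\rho(\mathds{R}^N_+)\hookrightarrow H^1(\mathds{R}^N_+)$, and then $H^1(\mathds{R}^N_+)\hookrightarrow L^p(\mathds{R}^N_+)$ for $p\in[2,2^\ast]$ is the standard Sobolev embedding on the half-space (via the extension operator $H^1(\mathds{R}^N_+)\to H^1(\mathds{R}^N)$ and Gagliardo--Nirenberg--Sobolev, followed by interpolation for the intermediate exponents).

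\emph{Main obstacle.} None of the steps is deep; the only point requiring a little care is the density argument needed to apply \eqref{Hardy p} (stated for $u\in C^\infty_0(\mathds{R}^N)$, equivalently $C^\infty_\delta(\mathds{R}^N_+)$) to a general element of $\mathcal{D}^{1,2}_\rho(\mathds{R}^N_+)$: one must check that a $\|\cdot\|$-Cauchy sequence in $C^\infty_\delta(\mathds{R}^N_+)$ is also Cauchy in $L^q(\mathds{R}^{N-1})$ and in $L^p(\mathds{R}^N_+)$, which is exactly what the inequalities above provide, so the limits are consistent and the embeddings are well-defined and continuous. A secondary subtlety is that $\mathcal{D}^{1,2}_\rho(\mathds{R}^N_+)$ is defined abstractly as a completion; one should note that the natural map into $L^2_{\mathrm{loc}}$ is injective (so elements are genuine functions), which follows from the weighted Hardy bound on any compact set together with local Sobolev embedding. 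With these remarks the proof is complete.
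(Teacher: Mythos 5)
Your argument is correct and follows essentially the same route as the paper's own proof: it uses the Hardy inequality \eqref{Hardy p} with $p=2$, the classical Sobolev trace inequality \eqref{classical trace embedding}, interpolation for intermediate exponents, and $(\rho_0)$ to pass from the weighted gradient to $\|\cdot\|$. Your extra remarks on density and well-definedness of the limit embeddings are sound but not materially different from what the paper leaves implicit.
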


\begin{proof} By \eqref{Hardy}, we have
\begin{equation*}
\frac{\gamma - 1}{2} \int_{\mathds{R}^{N-1}} u^2 \mathrm{d}x
\leq \int_{\mathds{R}{+}^N} (1 + x_N)^\gamma |\nabla u|^2 \mathrm{d}x.
\end{equation*}
On the other hand, from the classical trace inequality \eqref{classical trace embedding}, we obtain
\begin{equation}\label{ineq}
S_{\partial} \left( \int_{\mathds{R}^{N-1}} |u|^{2_\ast}  \mathrm{d}x \right)^{2/2_\ast}
\leq \int_{\mathds{R}^N_+} |\nabla u|^2  \mathrm{d}x
\leq \int_{\mathds{R}^N_+} (1 + x_N)^\gamma |\nabla u|^2  \mathrm{d}x^\prime.
\end{equation}
By applying an interpolation argument in conjunction with these two inequalities and the hypothesis $(\rho_0)$, we conclude the trace embedding. If $\gamma\geq 2$ it follows by \eqref{Hardy p} that
\begin{equation*}
    \int_{\mathds{R}^N_+}[|\nabla u|^2+u^2]\mathrm{d}x\leq \int_{\mathds{R}^N_+}\left[(1+x_N)^\gamma|\nabla u|^2+\frac{u^2}{(1+x_N)^{2-\gamma}}\right]\mathrm{d}x\leq C_\gamma \int_{\mathds{R}^N_+}(1+x_N)^\gamma|\nabla u|^2\mathrm{d}x,
\end{equation*}
then, using $(\rho_0)$ again we conclude the second embedding.
\end{proof}

We will consider the balls centered on $y\in\mathds{R}^{N-1}$  with radius $r,$
	\[
	\quad\Gamma_r(y)=\left\{ x^\prime\in\mathds{R}^{N-1}\text{ : }|x^\prime-y|<r \right\}\quad\text{and}\quad B_{r}^+(y)=\left\{ x\in \mathds{R}_{+}^{N}\text{ : } |x-y|<r  \right\}.
	\]
When $y=0$, we use the notation $B_{r}^+=B_{r}^+(0)$, $\Gamma_r=\Gamma_r(0)$

\begin{proof}[Proof of Theorem \ref{best constant}:]
We first observe that by \eqref{ineq} we have $S_{\partial}\leq S_{2_\ast,\mathds{R}^{N-1}}$. In order to show that $S_{2_\ast,\mathds{R}^{N-1}}\leq S_{\partial}$, let $r>0$ and $\varphi_r \in C^\infty_0(\mathds{R}^N)$ such that
\begin{equation*}
		\varphi_r=\left\{
		\begin{aligned}
			1, \quad&\mbox{in }&\ B_{r/2}^+(0),
			\vspace{0.2cm}\\
			0, \quad &\mbox{in }&\mathds{R}^{N}_+\backslash B_r(0).
		\end{aligned}
		\right. 
	\end{equation*}
For any  $\varepsilon>0$ set $\psi_\varepsilon:= \varphi_r u_\varepsilon$, where $u_\varepsilon$ is given by \eqref{instanton}, that is,
\[
\psi_\varepsilon(x)=\frac{\varepsilon^{(N-2)/2}\varphi_r}{[|x^\prime|^2+(x_N+\varepsilon)^2]^{(N-2)/2}}.
\]
Initially we see that
\[
\nabla \psi_\varepsilon = \frac{\varepsilon^{(N-2)/2}\nabla \varphi_r}{[|x^\prime|^2+(x_N+\varepsilon)^2]^{(N-2)/2}}-\frac{\varepsilon^{(N-2)/2}(N-2)\varphi_r(x^\prime,x_N+\varepsilon)}{[|x^\prime|^2+(x_N+\varepsilon)^2]^{N/2}},
\]
thus
\begin{align*}
    \int_{\mathds{R}^N_+}|\nabla \psi_\varepsilon|^2\mathrm{d}x=& \int_{\mathds{R}^N_+}\frac{\varepsilon^{N-2}|\nabla \varphi_r|^2}{[|x^\prime|^2+(x_N+\varepsilon)^2]^{N-2}}\mathrm{d}x-2(N-2)\int_{\mathds{R}^N_+}\frac{\varepsilon^{N-2}\varphi_r \nabla \varphi_r(x^\prime,x_N+\varepsilon)}{[|x^\prime|^2+(x_N+\varepsilon)^2]^{N-1}}\mathrm{d}x\\
    +&(N-2)^2\int_{\mathds{R}^N_+} \frac{\varepsilon^{N-2}\varphi_r^2}{[|x^\prime|^2+(x_N+\varepsilon)^2]^{N-1}}\mathrm{d}x\\
    &= I_1-2(N-2)I_2+I_3.
\end{align*}
For $I_1$, we have
\begin{align*}
    I_1 = \varepsilon^{N-2}\int_{B_r^+\backslash B_{r/2}}\frac{|\nabla \varphi_r|^2}{[|x^\prime|^2+(x_N+\varepsilon)^2]^{N-2}}\mathrm{d}x\leq C \varepsilon^{N-2}\int_{B_r^+\backslash B_{r/2}}\frac{\mathrm{d}x}{[|x^\prime|^2+(x_N+\varepsilon)^2]^{N-2}}, 
\end{align*}
thus $I_1=O(\varepsilon^{N-2})$. Similarly, $I_2=O(\varepsilon^{N-2})$.

\begin{align*}
    I_3=&\varepsilon^{N-2}\int_{\mathds{R}^N_+}\frac{\mathrm{d}x}{[|x^\prime|^2+(x_N+\varepsilon)^2]^{N-1}}-\varepsilon^{N-2}\int_{\mathds{R}^N_+\setminus B_{r/2}}\frac{\mathrm{d}x}{[|x^\prime|^2+(x_N+\varepsilon)^2]^{N-1}}\\
    &+\varepsilon^{N-2}\int_{\mathds{R}^{N}_+\backslash B^+_{r/2}}\frac{\varphi_r^2}{[|x^\prime|^2+(x_N+\varepsilon)^2]^{N-1}}\mathrm{d}x\\
    =& \|\nabla u_\varepsilon\|_2^2+O(\varepsilon^{N-2}).
\end{align*}
Then, $\|\nabla \psi_\varepsilon\|_2^2= \|\nabla u_\varepsilon\|_2^2+O(\varepsilon^{N-2})$. On the other hand, 
\begin{align*}
    \int_{\mathds{R}^{N-1}}|\psi_\varepsilon|^{2_\ast}\mathrm{d}x^\prime&=\varepsilon^{N-1}\int_{\mathds{R}^{N-1}}\frac{|\varphi_r|^{2_\ast}}{[|x^\prime|^2+\varepsilon^2]^{N-1}}\mathrm{d}x^\prime\\
    &= \varepsilon^{N-1}\int_{\mathds{R}^{N-1}}\frac{\mathrm{d}x^\prime}{[|x^\prime|^2+\varepsilon^2]^{N-1}}-\varepsilon^{N-1}\int_{\mathds{R}^{N-1}\setminus \Gamma_{r/2}}\frac{\mathrm{d}x^\prime}{[|x^\prime|^2+\varepsilon^2]^{N-1}} \\
    &+\varepsilon^{N-1}\int_{\mathds{R}^{N-1}\backslash \Gamma_{r/2}}\frac{|\varphi_r|^{2_\ast}}{[|x^\prime|^2+\varepsilon^2]^{N-1}}\mathrm{d}x^\prime\\
    &= \|u_\varepsilon\|_{2_\ast, \mathds{R}^{N-1}}^{2_\ast}+O(\varepsilon^{N-1}).
\end{align*}
Thus,
\begin{equation*}
    \frac{\int_{\mathds{R}^N_+}|\nabla \psi_\varepsilon|^2\mathrm{d}x}{\left(\int_{\mathds{R}^{N-1}}|\psi_\varepsilon|^{2_\ast}\mathrm{d}x^\prime\right)^{2/2_\ast}}= \frac{\|\nabla u_\varepsilon\|_2^2+O(\varepsilon^{N-2})}{\left(\|u_\varepsilon\|_{2_\ast,\mathds{R}^{N-1}}^{2\ast}+O(\varepsilon^{N-1})\right)^{2/2_\ast}}=S_\partial+O(\varepsilon^{N-2}).
\end{equation*}
Therefore,
\begin{equation*}
    S_{2_\ast,\mathds{R}^{N-1}}\leq \frac{\int_{\mathds{R}^{N}_+}\rho(x_N)|\nabla \psi_\varepsilon|^2\, \mathrm{d} x}{\left(\int_{\mathds{R}^{N-1}}|\psi_\varepsilon|^{2_\ast}\mathrm{d}x^\prime\right)^{2/2_\ast}}\leq \frac{(1+r)^\beta\int_{\mathds{R}^{N}_+}|\nabla \psi_\varepsilon|^2\, \mathrm{d} x}{\left(\int_{\mathds{R}^{N-1}}|\psi_\varepsilon|^{2_\ast}\mathrm{d}x^\prime\right)^{2/2_\ast}}= (1+r)^\beta S_{\partial}+O(\varepsilon^{N-2})
\end{equation*}
and passing to the limit as $r, \varepsilon \longrightarrow 0^+$, we obtain $S_{2_\ast,\mathds{R}^{N-1}}\leq S_{\partial}$. 

Now suppose that $S_{2_\ast,\mathds{R}^{N-1}}$ is attained by a function $u$. Then, we have
\[
 S_\partial= S_{2_\ast,\mathds{R}^{N-1}}\geq   \frac{\int_{\mathds{R}^N_+}(1+x_N)^\gamma|\nabla u|^2\mathrm{d}x}{\left(\int_{\mathds{R}^{N-1}}|u|^{2_\ast}\mathrm{d}x^\prime\right)^{2/2_\ast}}>  \frac{\int_{\mathds{R}^N_+}|\nabla u|^2\mathrm{d}x}{\left(\int_{\mathds{R}^{N-1}}|u|^{2_\ast}\mathrm{d}x^\prime\right)^{2/2_\ast}},
\]
which is a contradiction.
\end{proof}

\bigskip
\subsection{Proof of Theorem \ref{Compact embedding} and \ref{Attainability}} 

Given a function $u$, let us denote by $u^\ast(|x^\prime|, x_N)$, the nonincreasing rearrangement of the function $u(., x_N)$ in $\mathds{R}^{N-1}$ as given by its Schwarz symmetrization. The following properties hold for $s\in[1,\infty]$(see \cite{Badiale-Tarantello, Kawohl, PLLions}):

\begin{equation}\label{Property i}
 \int_{\mathds{R}^{N-1}}|u^\ast|^s\mathrm{d}x^\prime=\int_{\mathds{R}^{N-1}}|u|^s\mathrm{d}x^\prime, \quad\forall u \in L^s(\mathds{R}^{N-1}),
 \end{equation}
 \begin{equation}\label{Property ii}
  \int_{\mathds{R}^{N-1}}|u^\ast-v^\ast|^s\mathrm{d}x^\prime\leq \int_{\mathds{R}^{N-1}}|u-v|^s\mathrm{d}x^\prime, \quad\forall u \in L^s(\mathds{R}^{N-1}),
\end{equation}   
and
\begin{equation}\label{Property iii}     
     \int_{\mathds{R}^{N-1}}|\nabla_{x^\prime} u^\ast|^2\mathrm{d}x^\prime\leq \int_{\mathds{R}^{N-1}}|\nabla_{x^\prime} u|^2\mathrm{d}x^\prime,\quad\forall u \in \mathcal{D}^{1,2}(\mathds{R}^N_+).
\end{equation}

	\begin{lemma}\label{Lemma-Simetria}Assume $(\rho_0)$ with $\gamma\geq 0$. Then, it holds
    \begin{equation}\label{Lp sym}
        \int_{\mathds{R}^N_+}|u^\ast-v^\ast|^p\mathrm{d}x\leq \int_{\mathds{R}^N_+}|u-v|^p\mathrm{d}x,\quad \text{$\forall u,v\in L^p(\mathds{R}^N_+)$}
    \end{equation}
and    
	\begin{equation}\label{symmetrization}
			\int_{\mathds{R}^N_+}\rho(x_N)|\nabla u^\ast|^2\mathrm{d}x\leq \int_{\mathds{R}^{N}_+}\rho(x_N)|\nabla u|^2\mathrm{d}x,\quad \forall u\in \mathcal{D}^{1,2}_\rho(\mathds{R}^N_+).
		\end{equation}
In particular, if $u \in \mathcal{D}^{1,2}_\rho(\mathds{R}^N_+)$, then $u^\ast \in \mathcal{D}^{1,2}_\rho(\mathds{R}^N_+)$.        
	\end{lemma}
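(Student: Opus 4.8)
The plan is to prove the two inequalities separately, relying on the classical theory of Schwarz symmetrization applied slice-by-slice in the $x'$-variable. For \eqref{Lp sym}, I would first recall that for fixed $x_N$, the map $w\mapsto w^\ast$ (rearrangement in $\mathds{R}^{N-1}$) is a nonexpansive map on $L^p(\mathds{R}^{N-1})$ for every $p\in[1,\infty]$; this is precisely the statement \eqref{Property ii} quoted above, which holds for all $s\in[1,\infty]$ and in particular for $s=p$. Then I would write
\[
\int_{\mathds{R}^N_+}|u^\ast-v^\ast|^p\,\mathrm{d}x=\int_0^\infty\!\!\left(\int_{\mathds{R}^{N-1}}|u^\ast(\cdot,x_N)-v^\ast(\cdot,x_N)|^p\,\mathrm{d}x'\right)\mathrm{d}x_N,
\]
apply \eqref{Property ii} to the inner integral for a.e.\ $x_N$ (after checking that $u^\ast(\cdot,x_N)$ is indeed the Schwarz symmetrization of the slice $u(\cdot,x_N)$, which is how $u^\ast$ was defined), and integrate back in $x_N$ via Fubini. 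Note that the weight $\rho$ does not enter \eqref{Lp sym}, so $(\rho_0)$ is used here only to make sense of the ambient function space.

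For \eqref{symmetrization}, the key is to split the gradient as $|\nabla u|^2=|\nabla_{x'}u|^2+|\partial_{x_N}u|^2$ and treat the two parts differently. For the tangential part, I would apply the slicewise P\'olya--Szeg\H{o} inequality \eqref{Property iii}: for a.e.\ $x_N$,
\[
\int_{\mathds{R}^{N-1}}|\nabla_{x'}u^\ast(\cdot,x_N)|^2\,\mathrm{d}x'\le\int_{\mathds{R}^{N-1}}|\nabla_{x'}u(\cdot,x_N)|^2\,\mathrm{d}x',
\]
then multiply by $\rho(x_N)$ (which depends only on $x_N$ and is nonnegative, so the inequality is preserved) and integrate in $x_N$. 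For the normal part, the standard fact is that differentiation in the symmetrization parameter commutes with rearrangement in the sense that $\partial_{x_N}(u^\ast)=(\partial_{x_N}u)^{\ast\ast}$ up to the contraction property; more precisely one uses that $x_N\mapsto u^\ast(\cdot,x_N)$ is, for fixed $x'$, obtained from $x_N\mapsto u(\cdot,x_N)$ through operations that do not increase the $L^2$-norm of the $x_N$-derivative, giving
\[
\int_{\mathds{R}^{N-1}}|\partial_{x_N}u^\ast(\cdot,x_N)|^2\,\mathrm{d}x'\le\int_{\mathds{R}^{N-1}}|\partial_{x_N}u(\cdot,x_N)|^2\,\mathrm{d}x'
\]
for a.e.\ $x_N$; again multiply by $\rho(x_N)$ and integrate. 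Adding the two contributions yields \eqref{symmetrization}. The final sentence, that $u^\ast\in\mathcal{D}^{1,2}_\rho(\mathds{R}^N_+)$ whenever $u$ is, follows because the right-hand side of \eqref{symmetrization} being finite forces the left-hand side to be finite, i.e.\ $\|u^\ast\|\le\|u\|<\infty$, and because $u^\ast$ inherits the appropriate decay/trace behavior to lie in the closure $\mathcal{D}^{1,2}_\rho(\mathds{R}^N_+)$ (one may argue by density, approximating $u$ by $C^\infty_\delta$ functions and using lower semicontinuity of the norm under rearrangement together with $L^2_{\mathrm{loc}}$ convergence of the rearrangements).

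I expect the main obstacle to be the rigorous justification of the normal-derivative estimate for $u^\ast$, namely that Schwarz symmetrization performed independently on each horizontal slice does not increase $\int\rho(x_N)|\partial_{x_N}u|^2$. The subtlety is that $u^\ast$ is defined by symmetrizing each slice separately, so a priori one must show that $u^\ast$ is weakly differentiable in $x_N$ with the expected control; the clean way is to invoke the known result (see \cite{Badiale-Tarantello}, or the partial-symmetrization results in \cite{Kawohl}) that for $u\in W^{1,2}$ the ``partial'' Schwarz symmetrization in the $x'$-variables yields a function in $W^{1,2}$ satisfying both $\|\nabla_{x'}u^\ast\|_2\le\|\nabla_{x'}u\|_2$ and $\|\partial_{x_N}u^\ast\|_2\le\|\partial_{x_N}u\|_2$, and then localize the latter to slices. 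Once this structural fact is in hand, inserting the $x_N$-dependent weight $\rho(x_N)$ is immediate since it is constant on each slice, and the rest is Fubini. Everything else---the $L^p$ contraction and the density argument for membership in the space---is routine.
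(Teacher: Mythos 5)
Your proposal is correct in outline and shares the paper's high-level structure: \eqref{Lp sym} by slicewise nonexpansiveness plus Fubini, and \eqref{symmetrization} by splitting $|\nabla u|^2$ into tangential and normal parts, with the tangential part handled by slicewise P\'olya--Szeg\H{o} \eqref{Property iii}. The genuine divergence is exactly the step you flag as "the main obstacle," the normal-derivative bound. You propose to import it from the partial-symmetrization literature (Badiale--Tarantello, Kawohl), whereas the paper proves it directly and self-containedly: it applies the $L^2$-contraction \eqref{Property ii} to the two slices $u(\cdot,x_N+t)$ and $u(\cdot,x_N)$,
\[
\int_{\mathds{R}^{N-1}}|u^\ast(\cdot,x_N+t)-u^\ast(\cdot,x_N)|^2\,\mathrm{d}x'\le\int_{\mathds{R}^{N-1}}|u(\cdot,x_N+t)-u(\cdot,x_N)|^2\,\mathrm{d}x',
\]
divides by $t^2$, and lets $t\to 0$ to obtain $\int_{\mathds{R}^{N-1}}|u^\ast_{x_N}|^2\,\mathrm{d}x'\le\int_{\mathds{R}^{N-1}}|u_{x_N}|^2\,\mathrm{d}x'$ for $u\in C^\infty_\delta(\mathds{R}^N_+)$, then multiplies by $\rho(x_N)$ and integrates. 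This is a difference-quotient realization of the mechanism you describe loosely, but it is elementary and requires no external partial-symmetrization theorem. One small correction: the claim "$\partial_{x_N}(u^\ast)=(\partial_{x_N}u)^{\ast\ast}$ up to the contraction property" is not a genuine identity and should be dropped; what you need and what the paper proves is the one-sided bound above. Your closing density argument for $u^\ast\in\mathcal{D}^{1,2}_\rho(\mathds{R}^N_+)$ matches the paper's in spirit; the paper identifies the weak limit of $u_n^\ast$ via the $L^{2^\ast}(\mathds{R}^N_+)$-nonexpansiveness furnished by \eqref{Lp sym} rather than via $L^2_{\mathrm{loc}}$ convergence, but either identification suffices.
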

	
	\begin{proof} \eqref{Lp sym} follows directly by \eqref{Property ii}. Let $u\in C^{\infty}_\delta(\mathds{R}^N_+)$. Using \eqref{Property ii}, for every $t\neq0$ we have  
		\begin{equation*}
			\int_{\mathds{R}^{N-1}}\left|\frac{u^\ast(x^\prime,x_N+t)-u^\ast(x^\prime,x_N)}{t}\right|^2\mathrm{d}x^\prime\leq \int_{\mathds{R}^{N-1}}\left|\frac{u(x^\prime,x_N+t)-u(x^\prime,x_N)}{t}\right|^2\mathrm{d}x^\prime.
		\end{equation*}
		Taking the limit as $t\longrightarrow0$, we obtain 
		\begin{equation}\label{uxN ineq}
			\int_{\mathds{R}^{N-1}}|u^\ast_{x_N}|^2\mathrm{d}x^\prime\leq \int_{\mathds{R}^{N-1}}|u_{x_N}|^2\mathrm{d}x^\prime. 
		\end{equation}
		Thus, by \eqref{Property iii} and \eqref{uxN ineq} 
		\begin{equation*}
			\int_{\mathds{R}^{N-1}}|\nabla u^\ast|^2\mathrm{d}x^\prime= \int_{\mathds{R}^{N-1}}|\nabla_{x^\prime}u^\ast|^2+|u^\ast_{x_N}|^2\mathrm{d}x^\prime\leq  \int_{\mathds{R}^{N-1}}|\nabla u|^2\mathrm{d}x^\prime.
		\end{equation*}
Multiplying the above inequality by $\rho(x_N)$ and integrating, we obtain \eqref{symmetrization}.	
Now, let $u\in \mathcal{D}^{1,2}_\rho(\mathds{R}^N_+)$ and $(u_n)\subset C^\infty_\delta(\mathds{R}^N_+)$ such that $u_n\longrightarrow u$ in $\mathcal{D}^{1,2}_\rho(\mathds{R}^N_+)$. Then, $\|u_n^\ast\|\leq \|u_n\|\leq C$ and up to a subsequence we have
 $u_n^\ast \rightharpoonup v$ in $\mathcal{D}^{1,2}_\rho(\mathds{R}^N_+)$, for some $v\in \mathcal{D}^{1,2}_\rho(\mathds{R}^N_+)$. Given that, by \eqref{Lp sym}, $\|u_n^\ast-u^\ast\|_{2^\ast,\mathds{R}^N_+}\leq \|u_n-u\|_{2^\ast, \mathds{R}^N_+}\longrightarrow 0$, then $v=u^\ast$ and $\|u^\ast\|\leq \liminf_{n\rightarrow \infty} \|u_n^\ast\|\leq \|u\|$, which conclude \eqref{symmetrization}.
\end{proof}

Building on the approach in \cite{PLLions}, we prove the following lemma:
\begin{lemma}\label{Radial Lemma} Assume $(\rho_0)$ with $\gamma\geq2$ and $u \in \mathcal{D}^{1,2}_\rho(\mathds{R}^N_+)$. Then, for $x^\prime\neq 0$ we have 
 
\begin{equation*}
			|u^\ast(x^\prime,x_N)|\leq C \frac{\|u^\ast\|}{|x^\prime|^{(N-1)/2}}.
		\end{equation*}	
\end{lemma}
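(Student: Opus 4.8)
The plan is to exploit the cylindrical symmetry together with the one–dimensional slice estimates that are already available from the Schwarz symmetrization. Fix $x_N \geq 0$ and work on the hyperplane slice. For a.e. fixed $x_N$, the function $r \mapsto u^\ast(r,x_N)$ is nonnegative, nonincreasing in $r=|x'|$, and belongs to $W^{1,2}_{\mathrm{loc}}$ of the slice; the key inequality is the classical radial lemma of Strauss/Lions type in $\mathds{R}^{N-1}$, which gives, for each fixed $x_N$,
\begin{equation*}
|u^\ast(x',x_N)|^2 \;\leq\; \frac{C}{|x'|^{N-2}} \int_{\mathds{R}^{N-1}} |\nabla_{x'} u^\ast(\cdot,x_N)|^2 \,\mathrm{d}y' .
\end{equation*}
However, since we need a bound of order $|x'|^{-(N-1)/2}$, i.e. in $L^2$-of-both-variables form rather than $L^\infty$-in-$x_N$, I would instead estimate directly the quantity $f(r) := \int_0^\infty \rho(x_N)\, r^{N-2} |u^\ast(r,x_N)|^2 \,\mathrm{d}x_N$ (up to the surface-measure constant $\omega_{N-2}$) and show $f$ is controlled by $\|u^\ast\|^2$, which after isolating the pointwise value of $u^\ast$ yields exactly the claimed decay.

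The main steps I would carry out are the following. First, reduce to $u \in C^\infty_\delta(\mathds{R}^N_+)$ radial in $x'$ (the general case follows by the approximation argument already used in Lemma~\ref{Lemma-Simetria}, using that pointwise limits of monotone rearrangements behave well and that $\|u^\ast\| \leq \|u\|$). Second, for $r > 0$ write, for each fixed $x_N$,
\begin{equation*}
|u^\ast(r,x_N)|^2 \;=\; -\int_r^\infty \partial_s\!\left(|u^\ast(s,x_N)|^2\right)\mathrm{d}s \;=\; -2\int_r^\infty u^\ast(s,x_N)\, \partial_s u^\ast(s,x_N)\,\mathrm{d}s,
\end{equation*}
then multiply by $r^{N-2}$, use $r^{N-2}\leq s^{N-2}$ on the domain of integration, and apply Cauchy--Schwarz in $s$ to get
\begin{equation*}
r^{N-2}|u^\ast(r,x_N)|^2 \;\leq\; 2\left(\int_0^\infty s^{N-2}|u^\ast(s,x_N)|^2\,\mathrm{d}s\right)^{1/2}\left(\int_0^\infty s^{N-2}|\partial_s u^\ast(s,x_N)|^2\,\mathrm{d}s\right)^{1/2}.
\end{equation*}
Third, multiply by $\rho(x_N)$, integrate in $x_N \in (0,\infty)$, apply Cauchy--Schwarz in $x_N$, and recognize the two resulting factors: one is (a constant times) $\int_{\mathds{R}^{N-1}}|u^\ast|^2\,\mathrm{d}x'$ for each $x_N$ integrated against $\rho$, and the other is controlled by $\int_{\mathds{R}^N_+}\rho(x_N)|\nabla u^\ast|^2\,\mathrm{d}x = \|u^\ast\|^2$. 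Fourth, the extra $\int \rho(x_N)|u^\ast|^2\,\mathrm{d}x$ factor must be absorbed: this is where the hypothesis $\gamma \geq 2$ enters, since $\gamma \geq 2$ gives, via \eqref{Hardy p} with $p=2$ (and $\rho(s)\geq(1+s)^\gamma \geq (1+s)^{\gamma-2}(1+s)^2$, so that $\rho(x_N)|u^\ast|^2 \leq \rho(x_N)\,(1+x_N)^{2-\gamma}\,\frac{|u^\ast|^2}{(1+x_N)^{2-\gamma}}$ is delicate) — more cleanly, one uses the embedding $\mathcal{D}^{1,2}_\rho(\mathds{R}^N_+)\hookrightarrow H^1(\mathds{R}^N_+)$ established in Lemma~\ref{embedding} for $\gamma \geq 2$, which directly bounds $\int_{\mathds{R}^N_+}|u^\ast|^2\,\mathrm{d}x \leq C\|u^\ast\|^2$, and combined with $\rho \leq$ a fixed power only on compact slices — actually the sharp route is simply that \eqref{Hardy p} with $p=2$ gives $\int_{\mathds{R}^N_+}\frac{|u^\ast|^2}{(1+x_N)^{2-\gamma}}\mathrm{d}x \leq C\|u^\ast\|^2$, and one rewrites the slice-integral weighted by $\rho(x_N)$ accordingly. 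Putting the two factors together gives $r^{N-1}|u^\ast(r,x_N)|^2 \cdot(\text{const}) \leq C\|u^\ast\|^2$ after also handling the $x_N$-dependence, whence $|u^\ast(x',x_N)| \leq C\|u^\ast\|/|x'|^{(N-1)/2}$.

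The main obstacle I anticipate is handling the $x_N$-direction correctly: the naive radial lemma in $\mathds{R}^{N-1}$ only controls the slice norm $\int_{\mathds{R}^{N-1}}|\nabla_{x'}u^\ast(\cdot,x_N)|^2\,\mathrm{d}x'$ for each fixed $x_N$, which is not finite uniformly, so one genuinely has to integrate in $x_N$ against the weight $\rho(x_N)$ and arrange the Cauchy--Schwarz steps so that \emph{both} the gradient term and the lower-order $|u^\ast|^2$ term that appears are absorbed into $\|u^\ast\|^2$ using the Hardy--Sobolev inequality \eqref{Hardy p} — this is precisely why $\gamma \geq 2$ is required, since for $\gamma < 2$ the weight on the lower-order term in \eqref{Hardy p} degenerates and the absorption fails. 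A secondary technical point is justifying the differentiation-under-the-integral / fundamental-theorem-of-calculus step for $u^\ast$, which is only Lipschitz in general; this is handled by the smooth approximation in the first step and the monotone behaviour of rearrangements, exactly as in the proof of Lemma~\ref{Lemma-Simetria}.
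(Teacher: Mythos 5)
Your approach differs from the paper's and, as written, does not close. The core problem is the exponent: your slice-wise fundamental-theorem-of-calculus step is the standard proof of the Strauss radial lemma in the $(N-1)$-dimensional slice, and it produces at best $r^{N-2}|u^\ast(r,x_N)|^2 \lesssim A(x_N)^{1/2}B(x_N)^{1/2}$, i.e. decay of order $r^{-(N-2)/2}$, one half power short of the claimed $r^{-(N-1)/2}$. Multiplying by $\rho(x_N)$ and integrating in $x_N$ does not increase the power of $r$, and your Fourth step asserts $r^{N-1}|u^\ast(r,x_N)|^2\leq C\|u^\ast\|^2$ ``after also handling the $x_N$-dependence'' without actually carrying that out. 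The paper obtains the missing $r^{1/2}$ by a different device: it sets $\omega(x_N):=\int_{\Gamma_{|x'|}}u^\ast(y,x_N)\,\mathrm{d}y$, so monotonicity of $u^\ast$ in $|y|$ gives the lower bound $\omega(x_N)\geq c|x'|^{N-1}u^\ast(x',x_N)$, while Hölder over the ball $\Gamma_{|x'|}$ gives the upper bounds $|\omega|,|\omega'|\lesssim |x'|^{(N-1)/2}\|u^\ast(\cdot,x_N)\|_{L^2(\mathds{R}^{N-1})},\,\|\nabla u^\ast(\cdot,x_N)\|_{L^2(\mathds{R}^{N-1})}$; the gap between $|x'|^{N-1}$ below and $|x'|^{(N-1)/2}$ above is exactly the gain you are missing, and it comes from averaging over the ball, not from Cauchy--Schwarz in the $x_N$ direction. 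The passage from $L^2$-in-$x_N$ control to a pointwise bound is then handled by the one-dimensional Agmon inequality $\omega(x_N)^2\leq 2\|\omega\|_{L^2(0,\infty)}\|\omega'\|_{L^2(0,\infty)}$ (together with $\omega(\infty)=0$); this is precisely the step you gesture at but never perform.

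There is a second, independent gap in your Third and Fourth steps: after integrating the slice inequality against $\rho(x_N)$ and applying Cauchy--Schwarz, you must control $\int_{\mathds{R}^N_+}\rho(x_N)|u^\ast|^2\,\mathrm{d}x$. This is not bounded by $\|u^\ast\|^2$. Assumption $(\rho_0)$ is only a lower bound $\rho(s)\geq(1+s)^\gamma$; the Hardy inequality \eqref{Hardy p} controls $\int(1+x_N)^{\gamma-2}|u^\ast|^2\,\mathrm{d}x$, and Lemma~\ref{embedding} (for $\gamma\geq2$) controls the unweighted $\int|u^\ast|^2\,\mathrm{d}x$ — both are strictly weaker weights than $\rho$, which can grow arbitrarily fast. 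Your Fourth paragraph tries several reformulations, but none produce the needed bound, and in fact no such bound holds in this generality. The paper avoids the issue entirely by weighting only the gradient factor: in the estimate for $\omega'$ one inserts $\rho\geq1$ to absorb the weight into $\|u^\ast\|$, while the factor involving $|u^\ast|^2$ is estimated against the plain $L^2(\mathds{R}^N_+)$ norm, which is exactly where Lemma~\ref{embedding} and $\gamma\geq2$ enter. To repair your proof you would need to restructure the Cauchy--Schwarz so that $\rho$ sits only on the gradient term, and you would still need the Agmon step and a mechanism to recover the missing half power of $r$; at that point you will have reproduced the paper's argument.
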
		
	
	\begin{proof} Fix $x^\prime \in \mathds{R}^{N-1}\setminus\{0\}$ and consider the function defined by 
		\begin{equation*}
			\omega(x_N):=\int_{\Gamma_{|x^\prime|}}u^\ast(y,x_N)dy,
		\end{equation*}
		where $\Gamma_{|x^\prime|}$ is the ball centered in $0$ and radius $|x^\prime|$ of $\mathds{R}^{N-1}$. Since $u^\ast$ is decreasing in $|x^\prime|$ we observe that 
		\begin{equation}\label{left inequality}
			\omega(x_N)\geq C|x^\prime|^{N-1}u^\ast(x^\prime,x_N).
		\end{equation}
		On the other hand, using the Holder inequality and that $\gamma\geq 2$, we get
	
        \begin{align*}
			|\omega(x_N)|\leq& \left(\int_{\Gamma_{|x^\prime|}}|u^\ast|^2\mathrm{d}y\right)^{1/2}\left(\int_{\Gamma_{|x^\prime|}}dy\right)^{1/2}\\
			\leq& C|x^\prime|^{(N-1)/2} \left(\int_{\mathds{R}^{N-1}}|u^\ast|^2dy\right)^{1/2}.
		\end{align*}
Thus, by \eqref{symmetrization}, we have that $u^\ast \in \mathcal{D}^{1,2}_\rho(\mathds{R}^N_+)$, and, using Lemma \ref{embedding} and assumption $(\rho_0)$, we obtain
	
        \begin{equation}\label{w inequality}
			\left(\int_{0}^{\infty}|\omega|^2\mathrm{d}x_N\right)^{1/2}\leq C|x^\prime|^{(N-1)/2} \left(\int_{\mathds{R}^N_+}|u^\ast|^2\mathrm{d}x\right)^{1/2}\leq  C|x^\prime|^{(N-1)/2}\|u^\ast\|.
		\end{equation}
Moreover,  
		\begin{align*}
			|\omega^\prime(x_N)|\leq&\int_{\Gamma_{|x^\prime|}}|u^\ast_{x_N}(y,x_N)|\mathrm{d}y\\
            \leq& \left(\int_{\Gamma_{|x^\prime|}}|u_{x_N}^\ast|^2\mathrm{d}x\right)^{1/2}\left(\int_{\Gamma_{|x^\prime|}}dy\right)^{1/2}\\
            \leq& C |x^\prime|^{(N-1)/2}\left(\int_{\mathds{R}^{N-1}}|\nabla u^\ast|^2\mathrm{d}x\right)^{1/2}\\
            \leq& C |x^\prime|^{(N-1)/2}\left(\int_{\mathds{R}^{N-1}}\rho(x_N)|\nabla u^\ast|^2\mathrm{d}x\right)^{1/2}
		\end{align*}
Thus,
        \begin{equation}\label{psi derivative}
\left(\int_{0}^{\infty}|\omega^\prime|^2\mathrm{d}x_N\right)^{1/2}\leq C|x^\prime|^{(N-1)/2}\|u^\ast\|.
		\end{equation}

		Now, observe that, by the definition of $\omega$ and Jensen's inequality (see \cite[Theorem 3.3]{Rudin}), for any $\varepsilon>0$, we have
        \begin{align*}
            |\omega(s_1)^2-\omega(s_2)^2|\leq& 2\int_{s_1}^{s_2}|\omega(s)\omega^\prime(s)|\mathrm{d}s\\
            \leq& C \left(\int_{\mathds{R}^{N-1}\times (s_1,s_2)}|u^\ast|^2\mathrm{d}x\right)^{1/2}\left(\int_{\mathds{R}^{N-1}\times (s_1,s_2)}|u^\ast_{x_N}|^2\mathrm{d}x\right)^{1/2}\\
            \leq& C\varepsilon,
        \end{align*}
for $s_1,s_2$ sufficiently large, since $u^\ast, u^\ast_{x_N}\in L^2(\mathds{R}^N_+)$. Thus, $\lim_{x_N\longrightarrow \infty}\omega(x_N)^2=L<\infty$, moreover $L=0$, since $\omega \in L^2(0,\infty)$.
Hence,        
		\[
		\omega(x_N)^2=w(x_N)^2-	\omega(\infty)^2=-2\int_{x_N}^{\infty}\omega(s)\omega^\prime(s)ds.
		\]
By Hölder inequality, \eqref{w inequality} and \eqref{psi derivative} we have
		\begin{equation}\label{ineq omega}
			\omega(x_N)^2\leq 2\|\omega\|_{2,\mathds{R}_+}\|\omega^\prime\|_{2,\mathds{R}_+}
			\leq C|x^\prime|^{(N-1)}\|u^\ast\|^2.
		\end{equation}
		Therefore, by \eqref{left inequality} and \eqref{ineq omega} we get
\begin{equation*}
    C|x^\prime|^{N-1}u^\ast(x^\prime,x_N)\leq \omega(x_N)\leq C|x^\prime|^{(N-1)/2}\|u^\ast\|
\end{equation*}
and thus
\[
|u^\ast(x^\prime,x_N)|\leq C \frac{\|u^\ast\|}{|x^\prime|^{(N-1)/2}}.
\]
\end{proof}

Now we consider the following lemma proved in \cite{PLLions}.
\begin{lemma}\label{Lions} Let $\Omega\subset \mathds{R}^m$ be a bounded domain with boundary Lipschitz, $2\leq l\in \mathds{Z}$. We write $H^1_S(\Omega\times\mathds{R}^l )=\{u\in H^1(\Omega\times \mathds{R}^l): \forall x \in \Omega,~u(x,y)=u(x,|y|)~~\text{$y\in \mathds{R}^l$}\}$ and $N=m+l$. The restriction to $H^1_S(\Omega\times \mathds{R}^l)$ of Sobolev embedding  $H^1(\Omega\times \mathds{R}^l)\hookrightarrow L^p(\Omega\times \mathds{R}^l)$ is compact if $p\in(2,2^\ast)$.
\end{lemma}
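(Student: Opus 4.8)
The plan is to run the classical ``no loss of mass at infinity'' scheme, with the cylindrical symmetry in the $y$-variables playing the role that radial symmetry plays in the Strauss compactness theorem. Write points of $\mathds{R}^N$ as $(x,y)$ with $x\in\Omega\subset\mathds{R}^m$ and $y\in\mathds{R}^l$, and let $(u_n)$ be a bounded sequence in $H^1_S(\Omega\times\mathds{R}^l)$. After passing to a subsequence we may assume $u_n\rightharpoonup u$ in $H^1(\Omega\times\mathds{R}^l)$, with $u\in H^1_S$; setting $v_n:=u_n-u$ it suffices to show that $v_n\to 0$ in $L^p(\Omega\times\mathds{R}^l)$, knowing that $v_n\rightharpoonup 0$ in $H^1_S$ and $\sup_n\|v_n\|_{H^1}<\infty$.

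\emph{Local compactness.} For each fixed $R>0$ the set $\Omega\times B_R^l$ is a bounded Lipschitz domain of $\mathds{R}^N$, so Rellich--Kondrachov together with $p<2^\ast$ gives $v_n\to 0$ in $L^p(\Omega\times B_R^l)$; a diagonal extraction arranges this for every $R\in\mathds{N}$ simultaneously.

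\emph{Tail estimate.} Since $l\ge 2$, for a.e.\ $x\in\Omega$ the slice $v_n(x,\cdot)$ is a radial function in $H^1(\mathds{R}^l)$, so the sharp Strauss radial inequality yields
\[
|v_n(x,y)|\ \le\ C_l\,|y|^{-(l-1)/2}\,a_n(x)^{1/2}\,b_n(x)^{1/2},\qquad y\neq 0,
\]
where $a_n(x):=\|v_n(x,\cdot)\|_{L^2(\mathds{R}^l)}$ and $b_n(x):=\|\nabla_y v_n(x,\cdot)\|_{L^2(\mathds{R}^l)}$. Both $a_n$ and $b_n$ are bounded in $L^2(\Omega)$ by $\|v_n\|_{H^1}$; moreover, differentiating $a_n^2(x)=\int_{\mathds{R}^l}|v_n(x,y)|^2\,\mathrm{d}y$ under the integral and using $|\nabla_x(a_n^2)|\le 2a_n(x)\,\|\nabla_x v_n(x,\cdot)\|_{L^2(\mathds{R}^l)}$, the chain rule for $\sqrt{\cdot}$ shows $a_n\in H^1(\Omega)$ with $\|a_n\|_{H^1(\Omega)}\le\|v_n\|_{H^1(\Omega\times\mathds{R}^l)}$. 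Hence, by the Sobolev embedding of $H^1(\Omega)$ on $\Omega\subset\mathds{R}^m$, $(a_n)$ is bounded in $L^{2m/(m-2)}(\Omega)$ (in every $L^s(\Omega)$, $s<\infty$, or in $L^\infty(\Omega)$, when $m\le 2$). Writing $|v_n|^p=|v_n|^{p-2}\,|v_n|^2$ on $\{|y|>R\}$, bounding $|v_n|^{p-2}$ by the displayed estimate and integrating the leftover $|v_n|^2$ in $y$,
\[
\int_{\Omega\times\{|y|>R\}}|v_n|^p\,\mathrm{d}x\,\mathrm{d}y\ \le\ C\,R^{-\frac{(l-1)(p-2)}{2}}\int_\Omega a_n(x)^{\frac{p+2}{2}}\,b_n(x)^{\frac{p-2}{2}}\,\mathrm{d}x .
\]
The $x$-integral is then bounded by a Hölder inequality, playing the high integrability of $a_n$ off against the mere $L^2(\Omega)$ control of $b_n$; the exponent arithmetic closes precisely because $p<2^\ast=2N/(N-2)$ and $l\ge 2$ (for $m\ge 2$ this amounts to $p<2(m+1)/(m-1)$, exactly the admissibility threshold for the Hölder pair). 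Consequently $\int_{\Omega\times\{|y|>R\}}|v_n|^p\to 0$ as $R\to\infty$, uniformly in $n$.

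\emph{Conclusion and the crux.} Given $\varepsilon>0$, choose $R$ so that the tail integral is $<\varepsilon$ for all $n$, apply local compactness on $\Omega\times B_R^l$, and let $n\to\infty$; since $\varepsilon>0$ is arbitrary, $v_n\to 0$ in $L^p(\Omega\times\mathds{R}^l)$. The delicate point is the tail estimate: a crude radial bound in terms of $\|v_n(x,\cdot)\|_{H^1(\mathds{R}^l)}$ does not suffice, because the $y$-gradient piece $b_n$ is only controlled in $L^2(\Omega)$ whereas the computation needs an $L^p(\Omega)$-type bound with $p>2$. The remedy is to retain the \emph{sharp} Strauss splitting, so that the harmless factor $a_n$—which, unlike $b_n$, inherits a full $H^1(\Omega)$ bound and hence Sobolev integrability—absorbs the deficit, and then to check that the resulting Hölder exponents stay admissible throughout the range $p\in(2,2^\ast)$, $l\ge 2$.
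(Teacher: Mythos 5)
The paper does not supply a proof of this lemma; it simply invokes it as \quotes{proved in \cite{PLLions}}, so there is no internal argument to compare against. Your reconstruction is correct and is essentially the classical Strauss-type proof: local Rellich on $\Omega\times B_R^l$, plus a tail estimate from the sharp radial decay $|v_n(x,y)|\le C_l|y|^{-(l-1)/2}a_n(x)^{1/2}b_n(x)^{1/2}$, closed by a Hölder inequality in $x$. The key observation you make is exactly the right one: the crude bound $\|v_n(x,\cdot)\|_{H^1(\mathbb{R}^l)}$ would force you to integrate a power $>2$ of the $L^2(\Omega)$-bounded quantity $b_n$, which is not controlled; keeping the sharp product lets the factor $a_n$ — which, via $|\nabla_x a_n|\le\|\nabla_x v_n(x,\cdot)\|_{L^2(\mathbb{R}^l)}$, inherits a full $H^1(\Omega)$ bound and hence Sobolev integrability $L^{2m/(m-2)}(\Omega)$ — absorb the deficit. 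Your exponent arithmetic also checks out: the admissibility condition reduces to $p\le 2(m+1)/(m-1)$, and since $2^\ast=2(m+l)/(m+l-2)\le 2(m+1)/(m-1)$ exactly when $l\ge 1$, any $p<2^\ast$ with $l\ge 2$ leaves slack (and the cases $m\le 2$ only improve the integrability of $a_n$). Two small technical points to flag in a final write-up: (i) deducing $a_n\in H^1(\Omega)$ from the bound on $\nabla_x(a_n^2)$ requires regularizing, e.g.\ via $\sqrt{a_n^2+\varepsilon}$, to handle the zero set of $a_n$; (ii) one should note explicitly that $v_n=u_n-u$ lies in $H^1_S$, since $H^1_S$ is a closed subspace and hence weakly closed, so the Strauss estimate really applies to the slices $v_n(x,\cdot)$.
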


Exploring some ideas from \cite{Claudianor-Angelo} and employing an approach based on symmetrization arguments (see \cite{PLLions}), we prove the following compactness lemma:

\begin{proof}[Proof of Theorem \ref{Compact embedding}:] Let $(u_n)\subset \mathcal{R}^{1,2}_{\rho}(\mathds{R}^N_+)$ a bounded sequence. For some subsequence we have $u_n \rightharpoonup u$ in $\mathcal{R}^{1,2}_{\rho}(\mathds{R}^N_+)$. First, consider the interior case. 
Given $\varepsilon>0$, let $r_\varepsilon>0$ such that
\begin{equation*}
    \frac{1}{(1+x_N)^{\gamma-2}}<\varepsilon,\quad \forall x_N>r_\varepsilon.
\end{equation*}
Thus, by \eqref{Hardy p} and $(\rho_0)$,
\begin{equation*}
    \int_{\mathds{R}^{N-1}\times (r_\varepsilon,\infty)}|u_n-u|^2\mathrm{d}x\leq \varepsilon \int_{\mathds{R}^{N-1}\times (r_\varepsilon,\infty)}|u_n-u|^2(1+x_N)^{\gamma-2}\mathrm{d}x\leq C \varepsilon.
\end{equation*}
Considering $p\in(2,2^\ast)$, by interpolation inequality, for some $\alpha\in(0,1)$, we obtain
\begin{align*}
   \left(\int_{\mathds{R}^{N-1}\times (r_\varepsilon,\infty)}|u_n-u|^p\mathrm{d}x \right)^{1/p}\leq& \left(\int_{\mathds{R}^{N-1}\times (r_\varepsilon,\infty)}|u_n-u|^2\mathrm{d}x\right)^{\alpha/2}\\
   \times&\left(\int_{\mathds{R}^{N-1}\times (r_\varepsilon,\infty)}|u_n-u|^{2^\ast}\mathrm{d}x\right)^{(1-\alpha)/2^\ast}\\
   \leq& C\varepsilon^{\alpha/2}.
\end{align*}
On the other hand, by Lemma \ref{embedding}, we have
\begin{equation*}
    \mathcal{R}^{1,2}_\rho(\mathds{R}^{N-1}\times (0,r_\varepsilon))\hookrightarrow H^{1}_S(\mathds{R}^{N-1}\times (0,r_\varepsilon))\hookrightarrow L^p(\mathds{R}^{N-1}\times (0,r_\varepsilon)),
\end{equation*}
where the last embedding is compact due to Lemma \ref{Lions}, therefore the embedding $\mathcal{R}^{1,2}_\rho(\mathds{R}^N_+)\hookrightarrow L^p(\mathds{R}^N_+)$ is compact.

Now we focus on the trace embedding. By \eqref{symmetrization} we have $\|u_n^\ast\|$ bounded, then there exists $v\in \mathcal{R}^{1,2}_\rho(\mathds{R}^N_+)$ such that $u_n^\ast \rightharpoonup v$ in $\mathcal{R}^{1,2}_\rho(\mathds{R}^N_+)$, $u_n^\ast \longrightarrow v$ a.e. in $\mathds{R}^{N-1}$ and $u_n^\ast\longrightarrow v$ in $L^q_{\mathrm{loc}}(\mathds{R}^{N-1})$. Observe that $v=u^\ast$, given that, by \eqref{Lp sym} and the compactness in $L^p(\mathds{R}^N_+)$, we have $\|u_n^\ast-u^\ast\|_{p,\mathds{R}^N_+}\leq \|u_n-u\|_{p,\mathds{R}^N_+}\longrightarrow 0$. Therefore, we obtain
\begin{equation*}
    \int_{\Gamma_1}|u_n^\ast|^q\mathrm{d}x^\prime\longrightarrow \int_{\Gamma_1}|u^\ast|^q\mathrm{d}x^\prime.
\end{equation*}
Since $q>2$, due to Lemma \ref{Radial Lemma}, we have
\begin{equation*}
    |u_n^\ast|\leq C|x^\prime|^{-q(N-1)/2}\in L^1(\mathds{R}^{N-1}\setminus \Gamma_1).
\end{equation*}
Then,  by dominated convergence Theorem 
\begin{equation*}
    \int_{\mathds{R}^{N-1}\setminus\Gamma_1}|u_n^\ast|^q\mathrm{d}x^\prime\longrightarrow \int_{\mathds{R}^{N-1}\setminus\Gamma_1}|u^\ast|^q\mathrm{d}x^\prime.
\end{equation*}
Finally, by \eqref{Property i}, we obtain
\begin{equation*}
    \int_{\mathds{R}^{N-1}}|u_n|^q\mathrm{d}x^\prime=\int_{\mathds{R}^{N-1}}|u_n^\ast|^q\mathrm{d}x^\prime\longrightarrow \int_{\mathds{R}^{N-1}}|u^\ast|^q\mathrm{d}x^\prime=\int_{\mathds{R}^{N-1}}|u|^q\mathrm{d}x^\prime,
\end{equation*}
which concludes the theorem.
\end{proof}

\begin{proof}[Proof of Theorem \ref{Attainability}]
By Lemma~\ref{embedding}, we have that $S_{q,\mathds{R}^{N-1}} > 0$ when $\gamma > 1$ and $q \in [2, 2_\ast]$, and $S_{p,\mathds{R}^N_+} > 0$ when $\gamma \geq 2$ and $p \in [2, 2^\ast]$.

Now, suppose that $\gamma>2$, $q\in (2,2_\ast)$ and, without loss of generality, that $(u_n)$ is a minimizing sequence for \eqref{Constant1} with $\|u_n\|_{q,\mathds{R}^{N-1}} = 1$. By \eqref{Property i} and \eqref{Property iii}, the sequence $(u_n^\ast)$ is also a minimizing sequence with $\|u_n^\ast\|_{q,\mathds{R}^{N-1}} = 1$. Observe that $(u_n^\ast)$ is bounded in $\mathcal{D}^{1,2}_\rho(\mathds{R}^N_+)$, and, up to a subsequence, we have $u_n^\ast \rightharpoonup u$ in $\mathcal{D}^{1,2}_\rho(\mathds{R}^N_+)$. Since $\|u\| \leq \liminf_{n\rightarrow \infty} \|u_n\|$, it follows that $\|u\|^2 = S_{q,\mathds{R}^{N-1}}$. Moreover, by Lemma~\ref{Compact embedding}, we conclude that $\|u\|_{q,\mathds{R}^{N-1}} = 1$. 

As for $S_{p,\mathds{R}^N_+}$, using \eqref{Property i}, we obtain $\|u^\ast\|_{p,\mathds{R}^N_+} = \|u\|_{p,\mathds{R}^N_+}$. Therefore, applying the same argument, we conclude that $S_{p,\mathds{R}^N_+}$ is attained when $\gamma>2$ and $p\in [2,2^\ast]$.

We now prove that $S_{2,\mathds{R}^{N-1}}$ is not attained. The idea is inspired by the approach in \cite{Abreu-Furtado-Medeiros 2}. Given $u \in \mathcal{D}^{1,2}_\rho(\mathds{R}^N_+)$, define $u_\delta(x', x_N) := u(\delta x', x_N)$. Changing variables, we obtain:
\begin{equation*}
    S_{2,\mathds{R}^{N-1}} \int_{\mathds{R}^{N-1}} u^2 \, \mathrm{d}x' \leq \int_{\mathds{R}^N_+} \rho(x_N) \left( \delta^2 |\nabla_{x'} u|^2 + u_{x_N}^2 \right) \, \mathrm{d}x.
\end{equation*}
Letting $\delta \to 0$, we get:
\begin{equation}\label{uxN}
    S_{2,\mathds{R}^{N-1}} \int_{\mathds{R}^{N-1}} u^2 \, \mathrm{d}x' \leq \int_{\mathds{R}^N_+} \rho(x_N) u_{x_N}^2 \, \mathrm{d}x, \quad \forall u \in \mathcal{D}^{1,2}_\rho(\mathds{R}^N_+).
\end{equation}
Suppose that there exists $v \in \mathcal{D}^{1,2}_\rho(\mathds{R}^N_+)$ such that
\begin{equation*}
    S_{2,\mathds{R}^{N-1}} \int_{\mathds{R}^{N-1}} v^2 \, \mathrm{d}x' = \int_{\mathds{R}^N_+} \rho(x_N) |\nabla v|^2 \, \mathrm{d}x.
\end{equation*}
Then, combining this with \eqref{uxN}, we obtain $\nabla_{x'} v = 0$, so that $v(x', x_N) = w(x_N)$ for some function $w$. Consequently, $\|v\|_{2, \mathds{R}^{N-1}}^2 = \infty$, which is a contradiction, since $\mathcal{D}^{1,2}_\rho(\mathds{R}^N_+) \hookrightarrow L^2(\mathds{R}^{N-1})$ by Lemma \ref{embedding}. The same argument shows that $S_{2,\mathds{R}^N_+}$ cannot be attained.
\end{proof}

%
%
\section{Regularity results}

The main objective of this section is to proof the Theorems \ref{Holder regularity result} and \ref{H2_loc regularity theorem}.

\subsection{Hölder Regularity}

In this section, we investigate the Hölder regularity of weak solutions of problem \eqref{PG}. With an approach inspired by techniques found in \cite{Abreu-JM-Medeiros, Barrios, Drabek}, we provide the boundedness of the weak solutions, allowing us to reach the boundary regularity desired for the case where $a$ and $b$ have inverse signs. The interior regularity for $a,b>0$ in the subcritical case is established by verifying that $u^r$ belongs to $\mathcal{D}^{1,2}_\rho(\mathds{R}^N_+)$ and applying the classical regularity theory.

\begin{lemma}\label{u^r belongs D12} Assume $(\rho_0)$ with $\gamma\geq 2$, $p\in[2,2^\ast)$ and $q\in[2,2_\ast)$. If $u \in \mathcal{D}^{1,2}_\rho(\mathds{R}^N_+)$ is a nonnegative weak solution for \eqref{PG}, then $u^r \in \mathcal{D}^{1,2}_\rho(\mathds{R}^N_+)$ for all $r>1$.
    
\end{lemma}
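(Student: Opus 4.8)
The plan is to run a Moser-type iteration, but rather than estimating $L^\infty$ bounds directly we only need to show that $u^r \in \mathcal{D}^{1,2}_\rho(\mathds{R}^N_+)$ for every $r>1$. The natural test function is, for $L>0$ large, the truncation $\varphi = u\,u_L^{2(r-1)}$ where $u_L := \min\{u,L\}$; this lies in $\mathcal{D}^{1,2}_\rho(\mathds{R}^N_+)$ (it is Lipschitz in $u$, vanishes where $u$ does, and is controlled by $L^{2(r-1)}u$), so it is admissible in the weak formulation \eqref{weak solution}. First I would compute $\nabla\varphi = u_L^{2(r-1)}\nabla u + 2(r-1)u\,u_L^{2r-3}\nabla u_L$ and observe that on $\{u\le L\}$ the two gradient terms combine with a favourable sign, while on $\{u>L\}$ only the first term survives; plugging into the left-hand side of \eqref{weak solution} produces, up to the constant $r/(2r-1)$ coming from the identity $\nabla(u\,u_L^{r-1})$, a lower bound of the form
\[
\int_{\mathds{R}^N_+}\rho(x_N)\,\bigl|\nabla(u\,u_L^{r-1})\bigr|^2\,\mathrm{d}x \;\le\; C(r)\left[\,|a|\int_{\mathds{R}^N_+}u^{p-1}\,u\,u_L^{2(r-1)}\,\mathrm{d}x \;+\; |b|\int_{\mathds{R}^{N-1}}u^{q-1}\,u\,u_L^{2(r-1)}\,\mathrm{d}x^\prime\,\right].
\]

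The key step is to absorb the right-hand side. Since $\gamma\ge 2$ and $p\in[2,2^\ast)$, Lemma \ref{embedding} gives $\mathcal{D}^{1,2}_\rho(\mathds{R}^N_+)\hookrightarrow H^1(\mathds{R}^N_+)\hookrightarrow L^{2^\ast}(\mathds{R}^N_+)$, and the trace embedding $\mathcal{D}^{1,2}_\rho(\mathds{R}^N_+)\hookrightarrow L^{2_\ast}(\mathds{R}^{N-1})$ holds for $\gamma>1$; applied to $w:=u\,u_L^{r-1}$ these bound $\|w\|_{2^\ast,\mathds{R}^N_+}$ and $\|w\|_{2_\ast,\mathds{R}^{N-1}}$ by $\|w\|$. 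On the right-hand side, $u^{p-1}\,u\,u_L^{2(r-1)} = u^{p-2}\,w^2$ with $p-2\in[0,2^\ast-2)$, so Hölder with exponents $2^\ast/(p-2)$ and $2^\ast/(2^\ast-p+2)$ bounds the interior term by $\|u\|_{2^\ast,\mathds{R}^N_+}^{p-2}\,\|w\|_{\,2^\ast(2)/(2^\ast-p+2)}^2$, and the latter $L^s$-norm of $w$ has $s<2^\ast$, hence interpolates between $L^2$ and $L^{2^\ast}$; likewise the boundary term is handled by Hölder and the trace embedding since $q-2<2_\ast-2$. Choosing $L$-independent constants and then letting $L\to\infty$ by monotone convergence, one obtains a closed estimate showing $w\in\mathcal{D}^{1,2}_\rho(\mathds{R}^N_+)$ — but this already requires knowing $w\in L^2$-type spaces, which is why the argument must be bootstrapped: one starts from $r$ slightly bigger than $1$ (where $w=u\,u_L^{r-1}$ is controlled because $u\in L^{2^\ast}\cap L^{2_\ast}$), concludes $u^r\in\mathcal{D}^{1,2}_\rho$, hence $u^r\in L^{2^\ast}(\mathds{R}^N_+)\cap L^{2_\ast}(\mathds{R}^{N-1})$, and feeds this improved integrability back to push $r$ up. Since each step multiplies the admissible exponent by a fixed factor $2^\ast/(p)$ (or $2_\ast/q$) strictly greater than $1$, finitely many iterations reach any prescribed $r$.

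The main obstacle I anticipate is making the absorption uniform in $L$: the term $\|w\|^2$ appears on both sides, and one must verify that the coefficient in front of it on the right can be made $<1$ — this is where the subcriticality $p<2^\ast$, $q<2_\ast$ is essential, since it forces the interpolation to put a genuine (small) power on the high-integrability factor, so Young's inequality lets one absorb $\varepsilon\|w\|^2$ into the left-hand side while the remaining $C_\varepsilon\|u\|_{2^\ast}^{\,\cdot}\|u\|_{2_\ast}^{\,\cdot}$-type constant stays finite and $L$-independent. A secondary technical point is justifying that $\varphi=u\,u_L^{2(r-1)}$ is a legitimate test function in $\mathcal{D}^{1,2}_\rho(\mathds{R}^N_+)$ (not merely in a local space): this follows because $u\in\mathcal{D}^{1,2}_\rho$ together with the embeddings of Lemma \ref{embedding} give $u$ in the relevant Lebesgue spaces, $\varphi$ is a Lipschitz function of $u$ vanishing at $0$, and $0\le\varphi\le L^{2(r-1)}u$, so $\varphi\in\mathcal{D}^{1,2}_\rho(\mathds{R}^N_+)$ with $\nabla\varphi$ given by the chain rule almost everywhere.
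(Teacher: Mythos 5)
Your proposal is correct in outline and does prove the lemma, but it takes a genuinely different route from the paper, and the difference is worth noting because the paper's version is simpler precisely where yours is most delicate.

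You test with $\varphi = u\,u_L^{2(r-1)}$ for an \emph{arbitrary} $r>1$, estimate the right-hand side by H\"older $+$ interpolation $+$ Young to produce an $\varepsilon\|w\|^2$ term to absorb, and then bootstrap because the surviving $C_\varepsilon\|w\|_2^2$ (and its boundary analogue) are finite only once enough integrability of $u$ is known. This works, but the absorption step and the uniform-in-$L$ bookkeeping are exactly what the paper avoids: there one instead chooses the first exponent $r_1=\min\{2^\ast-p+1,\,2_\ast-q+1\}$ \emph{before} testing, so that the right-hand side reads $|a|\int u^{p-1+r_1}+|b|\int u^{q-1+r_1}$ with $p-1+r_1\le 2^\ast$ and $q-1+r_1\le 2_\ast$, hence is finite outright by Lemma~\ref{embedding} without any Young trick, and then iterates $r_n=\min\{2^\ast s_{n-1}+1-p,\,2_\ast s_{n-1}+1-q\}$ with $s_n=(r_n+1)/2$. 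That choice makes the base estimate a one-liner; your variant carries the cost of verifying that the interpolation exponent $\theta=(p-2)/(2^\ast-2)$ is strictly less than $1$ (which is where you use $p<2^\ast$, $q<2_\ast$) and that the coefficient can be made $<1$ uniformly in $L$. Conversely, your iteration gains something: the admissible $r$ grows by the fixed factor $2_\ast/2>1$ each step, so divergence of the exponents is immediate, whereas the paper's recursion $r_n$ requires a small contradiction argument to see $r_n\to\infty$. Two minor points: the constant relating $\nabla u\cdot\nabla\varphi$ to $|\nabla(u\,u_L^{r-1})|^2$ is $(2r-1)/r^2$ (not $r/(2r-1)$), obtained by comparing the chain rule on $\{u\le L\}$ with the trivial identity on $\{u>L\}$; and in your absorption step the term $C_\varepsilon\|w_L\|_2^2$ is not a constant but the quantity driving the bootstrap, so it is worth stating explicitly that at the $n$-th stage you need $u\in L^{2r}(\mathds{R}^N_+)\cap L^{2r}(\mathds{R}^{N-1})$, which the previous stage supplies via $u^{r_{n-1}}\in\mathcal{D}^{1,2}_\rho\hookrightarrow L^{2^\ast}(\mathds{R}^N_+)\cap L^{2_\ast}(\mathds{R}^{N-1})$.
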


\begin{proof} Let $1<r_1=\min\{2^\ast-p+1, 2_\ast-q+1\}$ and define $T_ku= \min\{u^{r_1}, k^{r_1}\}$, with $k \in \mathds{N}$. Observe that $T_ku \in \mathcal{D}^{1,2}_\rho(\mathds{R}^N_+)$ with 
\begin{equation*}
    \nabla T_ku=\left\{
		\begin{aligned}
			(r_1-1)u^{r_1-1}\nabla u\ &\mbox{if }&\ u\leq k,
			\vspace{0.2cm}\\
			0\ &\mbox{if }&\ u>k.
		\end{aligned}
		\right.
\end{equation*}
From \eqref{weak solution} and a density argument we have
\begin{equation*}
    \int_{\mathds{R}^N_+}\rho(x_N)\nabla u \nabla(T_ku)\mathrm{d}x= a\int_{\mathds{R}^N_+}u^{p-1} T_k(u)\mathrm{d}x+b\int_{\mathds{R}^{N-1}}u^{q-1}T_k(u)\mathrm{d}x^\prime,
\end{equation*}
then
\begin{equation*}
    (r_1-1)\int_{\{u\leq k\}}\rho(x_N)|\nabla u|^2u^{r_1-1}\mathrm{d}x\leq 2|a| \int_{\mathds{R}^N_+} u^{p-1+r_1}\mathrm{d}x+2|b|\int_{\mathds{R}^{N-1}}u^{q-1+r_1}\mathrm{d}x^\prime
\end{equation*}
and
\begin{equation*}
    \int_{\{u\leq k\}}\rho(x_N)|\nabla (u^{\frac{r_1+1}{2}})|^2\mathrm{d}x\leq |a| \int_{\mathds{R}^N_+} u^{p-1+r_1}\mathrm{d}x+|b|\int_{\mathds{R}^{N-1}}u^{q-1+r_1}\mathrm{d}x^\prime.
\end{equation*}
The right side is finite by restriction to $r_1$. Thus, passing to the limit when $k \longrightarrow \infty$ and applying the monotone convergence theorem, we conclude that $u^{\frac{r_1+1}{2}} \in \mathcal{D}^{1,2}_\rho(\mathds{R}^N_+) $. Now, define $s_1:= (r_1+1)/2$ and observe that by Lemma \ref{embedding} we have
\begin{equation}\label{u in Ls}
   u \in \left\{
		\begin{aligned}
			L^s(\mathds{R}^N_+) \ &\mbox{for }&\ s \in\ &[2, 2^\ast s_1],
			\vspace{0.2cm}\\
			L^s(\mathds{R}^{N-1}) \ &\mbox{for }&\ s \in\ & [2, 2_\ast s_1].
		\end{aligned}
		\right.
\end{equation}
Now, let $r_2 = \min\{2^\ast s_1 -p+1, 2_\ast s_1-q+1\}$ and take $T_k(u)=\min\{u^{r_2},k^{r_2}\}$. Taking $T_k(u)$ as test function and arguing as in the previous case, we obtain
\[
\int_{\mathds{R}^N_+}\rho(x_N)|\nabla (u^{\frac{r_2+1}{2}})|^2\mathrm{d}x\leq |a| \int_{\mathds{R}^N_+} u^{p-1+r_2}\mathrm{d}x+|b|\int_{\mathds{R}^{N-1}}u^{q-1+r_2}\mathrm{d}x^\prime,
\]
and thus, by \eqref{u in Ls}, $u^{s_2} \in \mathcal{D}^{1,2}_\rho(\mathds{R}^N_+)$ for $s_2= (r_2+1)/2$. We can iterate this argument and we obtain a sequence $(r_n)$ such that
\[
r_n= \min\{2^\ast s_{n-1}+1-p,2_\ast s_{n-1}+1-q\}\quad\text{and}\quad u^{s_n}\in \mathcal{D}^{1,2}_\rho(\mathds{R}^N_+),
\]
where $s_n= (r_n+1)/2$. Clearly we have $r_n\longrightarrow \infty$. In fact, suppose that for some $M>0$, $r_n\leq M$. Going if necessary to a subsequence, we have $r_n\longrightarrow r\geq 1$. Then, passing to the limit in the definition of $r_n$, we get
\begin{equation*}
    \frac{p-2}{2^\ast-2}=\frac{r+1}{2}\quad\text{or}\quad \frac{q-2}{2_\ast-2}=\frac{r+1}{2}.
\end{equation*}
Which is a contradiction in both cases, since $p,q$ are subcritical and $r\geq1$.

Now, observe that 
\begin{equation}\label{u belongs Ls}
u \in L^s(\mathds{R}^N_+)\cap L^s(\mathds{R}^{N-1})\quad \forall s>2.
\end{equation}
In fact, given $s>2$, we can choose $n$ sufficient large to $2s_n>s$, then, since $u^{s_n}$ belongs to $ \mathcal{D}^{1,2}_\rho(\mathds{R}^N_+)$ and hence to $L^{2^\ast s_n}(\mathds{R}^N_+)$, by interpolation inequality we have, for some $\alpha \in (0,1)$,
\[
\|u\|_{s, \mathds{R}^N_+}\leq \|u\|_{2, \mathds{R}^N_+}^{1-\alpha}\|u\|_{2^\ast s_n, \mathds{R}^N_+}^\alpha <\infty.
\]
Finally, we claim that $u^r \in \mathcal{D}^{1,2}_\rho(\mathds{R}^N_+)$ for all $r>1$. Indeed, given $r>1$, let $s>1$ such that $(s+1)/2=r$. Then, reproducing the initial argument, we get
\[
\int_{\mathds{R}^N_+}\rho(x_N)|\nabla (u^{\frac{s+1}{2}})|^2\mathrm{d}x\leq a \int_{\mathds{R}^N_+} |u|^{p-1+s}\mathrm{d}x+b\int_{\mathds{R}^{N-1}}|u|^{q-1+s}\mathrm{d}x^\prime,
\]
where the right side is finite by \eqref{u belongs Ls}, which concludes the result.
\end{proof}

\begin{lemma}\label{Linfty}
Assume that condition $(\rho_0)$ holds. Then, in each of following assertions:
\begin{itemize}
    \item[$(i)$]  $u$ is a nonnegative weak solution in $E_q(\mathds{R}^{N-1})$, with $\gamma\geq 2$, $a > 0$, $b \leq 0$, $p \in [2,2^\ast)$, and $q \in (1,\infty)$;
    \item[$(ii)$]  $u$ is a nonnegative weak solution in $E_p(\mathds{R}^N_+)$, with $\gamma>1$, $a \leq 0$, $b > 0$, $p \in (1,\infty)$, and $q \in [2, 2_\ast)$,
\end{itemize}
we have $u \in L^\infty(\mathds{R}^N_+) \cap L^\infty(\mathds{R}^{N-1})$.
\end{lemma}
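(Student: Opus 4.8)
The plan is to run a Moser--Brezis--Kato iteration that treats the two cases in parallel. The structural point is that in case $(i)$ only the \emph{interior} nonlinearity, with subcritical exponent $p\in[2,2^\ast)$, has to be controlled: since $b\le 0$ and $u\ge 0$, the boundary term has the favourable sign and can be discarded; symmetrically, in case $(ii)$ only the \emph{boundary} nonlinearity, with $q\in[2,2_\ast)$, is controlled, while the interior term (with $a\le 0$) is discarded. Throughout I would use that $\mathcal{D}^{1,2}_\rho(\mathds{R}^N_+)\hookrightarrow L^{2^\ast}(\mathds{R}^N_+)$ — which is immediate from $\rho\ge 1$ and the Sobolev inequality on the half-space — and $\mathcal{D}^{1,2}_\rho(\mathds{R}^N_+)\hookrightarrow L^{2_\ast}(\mathds{R}^{N-1})$, valid by Lemma~\ref{embedding} since $\gamma>1$ in both cases.

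Fix $\beta\ge 1$ and $k\in\mathds{N}$, put $w_k:=\min\{u,k\}$, and use $\varphi_k:=u\,w_k^{2(\beta-1)}$ as a test function in \eqref{weak solution}. Since $0\le\varphi_k\le k^{2(\beta-1)}u$ and $u\in\mathcal{D}^{1,2}_\rho(\mathds{R}^N_+)\cap L^p(\mathds{R}^N_+)\cap L^q(\mathds{R}^{N-1})$ — by the definition of $E_q(\mathds{R}^{N-1})$, resp. $E_p(\mathds{R}^N_+)$, together with Lemma~\ref{embedding} — the identity \eqref{weak solution} extends to $\varphi_k$ by a standard density argument. Writing $v_k:=u\,w_k^{\beta-1}$, a computation of $\nabla\varphi_k$ on $\{u\le k\}$ and $\{u>k\}$ gives the coercivity bound $\int_{\mathds{R}^N_+}\rho(x_N)\nabla u\cdot\nabla\varphi_k\,\mathrm{d}x\ \ge\ \tfrac{2\beta-1}{\beta^2}\,\|v_k\|^2$, so, discarding the favourably-signed term, in case $(i)$ one gets $\tfrac{2\beta-1}{\beta^2}\|v_k\|^2\le |a|\int_{\mathds{R}^N_+}u^{p-2}v_k^2\,\mathrm{d}x$ and in case $(ii)$ one gets $\tfrac{2\beta-1}{\beta^2}\|v_k\|^2\le |b|\int_{\mathds{R}^{N-1}}u^{q-2}v_k^2\,\mathrm{d}x^\prime$.

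To estimate the right-hand side — I describe case $(i)$ with $p\in(2,2^\ast)$, the case $p=2$ being immediate and case $(ii)$ being the mirror image — I would split over $\{u\le M\}$, where $u^{p-2}\le M^{p-2}$ so the contribution is $\le M^{p-2}\|u\|_{L^{2\beta}(\mathds{R}^N_+)}^{2\beta}$ (using $v_k^2\le u^{2\beta}$ pointwise), and over $\{u>M\}$, where Hölder's inequality with exponents $\tfrac{2^\ast}{p-2}$, $\tfrac{2^\ast}{2^\ast-p+2}$, followed by interpolation of the $L^\sigma$-norm of $v_k$, $\sigma:=\tfrac{2\cdot 2^\ast}{2^\ast-p+2}\in(2,2^\ast)$, between $L^2$ and $L^{2^\ast}$ together with the Sobolev embedding, produces a bound of the form $\big(\int_{\{u>M\}}u^{2^\ast}\,\mathrm{d}x\big)^{\frac{p-2}{2^\ast}}\|u\|_{L^{2\beta}(\mathds{R}^N_+)}^{2\beta(1-\tau)}\|v_k\|^{2\tau}$ with $\tau:=\tfrac{p-2}{2^\ast-2}\in(0,1)$. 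Since $u\in L^{2^\ast}(\mathds{R}^N_+)$, the first factor is as small as we like for $M$ large, and — importantly — $M$ may be fixed once and for all, independently of $\beta$; as $\tau<1$, Young's inequality absorbs the $\|v_k\|^{2\tau}$ term into the left side. Letting $k\to\infty$ (so $v_k\uparrow u^\beta$ monotonically) then gives $u^\beta\in\mathcal{D}^{1,2}_\rho(\mathds{R}^N_+)$ together with $\|u^\beta\|^2\le C_0\,\beta^{1/(1-\tau)}\|u\|_{L^{2\beta}(\mathds{R}^N_+)}^{2\beta}$ with $C_0$ independent of $\beta$; combined with the Sobolev and trace embeddings this bounds both $\|u\|_{L^{2^\ast\beta}(\mathds{R}^N_+)}^{2\beta}$ and $\|u\|_{L^{2_\ast\beta}(\mathds{R}^{N-1})}^{2\beta}$ by a constant times the same quantity.

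Finally I would iterate. Starting from $\beta_0=1$ — admissible since $u\in L^2(\mathds{R}^N_+)$ by Lemma~\ref{embedding} ($\gamma\ge 2$) — and setting $\beta_{n+1}=\tfrac{2^\ast}{2}\beta_n$ (so $2\beta_{n+1}=2^\ast\beta_n$), the inclusion $u^{\beta_n}\in L^{2^\ast}(\mathds{R}^N_+)$, i.e. $u\in L^{2\beta_{n+1}}(\mathds{R}^N_+)$, makes the $(n{+}1)$-st step legitimate, and the recursion $\|u\|_{L^{2\beta_{n+1}}(\mathds{R}^N_+)}\le\big(C_0'\beta_n^{1/(1-\tau)}\big)^{1/(2\beta_n)}\|u\|_{L^{2\beta_n}(\mathds{R}^N_+)}$ has convergent partial products because $\sum_n \beta_n^{-1}\log\big(C_0'\beta_n^{1/(1-\tau)}\big)<\infty$. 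Hence $\sup_n\|u\|_{L^{2\beta_n}(\mathds{R}^N_+)}<\infty$, which with $u\in L^2(\mathds{R}^N_+)$ forces $u\in L^\infty(\mathds{R}^N_+)$, and the companion bounds for $\|u\|_{L^{2_\ast\beta_n}(\mathds{R}^{N-1})}$ together with $u\in L^{2_\ast}(\mathds{R}^{N-1})$ force $u\in L^\infty(\mathds{R}^{N-1})$. Case $(ii)$ is identical with the roles of $\mathds{R}^N_+$ and $\mathds{R}^{N-1}$ exchanged: one discards the interior term, runs the estimate with $u^{q-2}v_k^2$ on $\mathds{R}^{N-1}$, uses $\tau'=\tfrac{q-2}{2_\ast-2}\in[0,1)$ and $\beta_{n+1}=\tfrac{2_\ast}{2}\beta_n$, starts from $u\in L^2(\mathds{R}^{N-1})$ (Lemma~\ref{embedding}, $\gamma>1$), and recovers $u\in L^\infty(\mathds{R}^N_+)$ from $\mathcal{D}^{1,2}_\rho(\mathds{R}^N_+)\hookrightarrow L^{2^\ast}(\mathds{R}^N_+)$. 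I expect the delicate point to be keeping the loop self-consistent — guaranteeing the relevant $L^{2\beta_n}$-integrability of $u$ \emph{before} the $n$-th step (which is why the truncation $w_k$ is retained so that every quantity is a priori finite, and why the Sobolev/trace embeddings are used to upgrade the integrability gained at step $n-1$) — and checking that neither the Brezis--Kato cutoff $M$ nor the constant $C_0$ degenerates as $\beta\to\infty$.
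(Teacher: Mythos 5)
Your proposal is correct and rests on the same Moser-iteration skeleton as the paper's proof: both test the weak formulation against a truncated power of $u$, discard the favourably-signed term, and iterate through a geometric family of exponents. But the two arguments handle the resulting nonlinear term quite differently, and in a way that matters. The paper applies a single Hölder factorization $\int u^{2(k+1)}u^{p-2}\le\|u^{2(k+1)}\|_{L^{\zeta/2}}\|u^{p-2}\|_{L^{2^\ast/(p-2)}}$ with $\tfrac{2}{\zeta}+\tfrac{p-2}{2^\ast}=1$ and $\zeta\in[2,2^\ast)$, so the second factor is a fixed constant and the inequality is immediately a closed recursion between $\|u\|_{(k+1)\zeta}$ and $\|u\|_{(k+1)2^\ast}$ — no interpolation, no Young absorption, and the factor $\|u\|_{2^\ast}^{p-2}$ never threatens to blow up. You instead run a Brezis--Kato split over $\{u\le M\}$ and $\{u>M\}$ followed by $L^2$--$L^{2^\ast}$ interpolation and a Young absorption with $\tau=\tfrac{p-2}{2^\ast-2}<1$. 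This is perfectly sound here, and is the right tool at the critical exponent $p=2^\ast$ (which the paper treats separately in Lemma~\ref{Linfty 2}), but for $p<2^\ast$ it is more machinery than the lemma needs: the split over $M$ adds nothing, since the uniform constant $\|u\|_{2^\ast}^{p-2}$ already works, and the Young absorption introduces a $\beta$-dependence in $\delta$ that you then have to track. The second genuine divergence is how you reach the boundary (resp.\ interior) $L^\infty$ bound: you read it off from the companion trace inequality $\|u^\beta\|_{L^{2_\ast}(\mathds{R}^{N-1})}\le C\|u^\beta\|$ and pass to the limit in $\beta_n$, whereas the paper first establishes $u\in L^\infty(\mathds{R}^N_+)$ and then tests against $T_ku=(u-k)_+$ with $k>\|u\|_\infty$ to kill the interior terms and force $|\{u>k\}\cap\mathds{R}^{N-1}|=0$. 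Your route is actually slightly cleaner, since it avoids the separate truncation argument and handles $b=0$ transparently, at the cost of having to keep track of both interior and trace norms throughout the iteration. Both proofs are valid; yours is more robust (it generalizes to the critical case with essentially no changes), while the paper's single-Hölder recursion is shorter and delivers the explicit bound $\|u\|_\infty\le C\|u\|_{2^\ast}$ with minimal bookkeeping.
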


\begin{proof} We begin by considering case $(i)$. Since $\gamma\geq 2$, by Lemma \ref{embedding} we have $E_q(\mathds{R}^{N-1})\hookrightarrow \mathcal{D}^{1,2}_\rho(\mathds{R}^N_+)\hookrightarrow L^p(\mathds{R}^N_+)$ for $p\in[2,2^\ast]$. Then, arguing by density, the definition of weak solution holds for testing functions in $E_q(\mathds{R}^{N-1})$. For $m\in \mathds{N}$, define $u_m=\min\{u,m\}$. Taking $\varphi=u_m^{2k+1}$ as a testing function in \eqref{weak solution}, with $k>0$ we get
\begin{align*}
    \int_{\mathds{R}^N_+} \rho(x_N)\nabla u\nabla (u_m^{2k+1})\mathrm{d}x&=a\int_{\mathds{R}^N_+}u^{p-1}u_m^{2k+1}\mathrm{d}x+b\int_{\mathds{R}^{N-1}}u^{q-1}u_m^{2k+1}\mathrm{d}x^\prime\\
    &\leq a\int_{\mathds{R}^{N}_+}u^{2k+p}\mathrm{d}x,
\end{align*}
where the inequality follows from the nonnegativity of \( u \), the assumption \( b \leq 0 \), and the fact that \( u_m \leq u \). Given that
\begin{equation*}
C\left(\int_{\mathds{R}^N_+}u^{2^\ast}\mathrm{d}x\right)^{2/2^\ast}\leq \int_{\mathds{R}^N_+}|\nabla u|^2\mathrm{d}x\leq  \int_{\mathds{R}^N_+}\rho(x_N)|\nabla u|^2\mathrm{d}x,
\end{equation*}
we obtain
\begin{align*}
    \int_{\mathds{R}^N_+} \rho(x_N) \nabla u\nabla (u_m^{2k+1})\mathrm{d}x=& \frac{2k+1}{(k+1)^2}\int_{\mathds{R}^{N_+}}\rho(x_N)|\nabla (u_m^{k+1})|^2\mathrm{d}x\\
    \geq& \,C \frac{2k+1}{(k+1)^2}\left(\int_{\mathds{R}^{N}_+}u_m^{(k+1)2^\ast}\mathrm{d}x\right)^{2/2^\ast}.
\end{align*}
Hence, we deduce that
\begin{equation*}
    C \frac{2k+1}{(k+1)^2}\left(\int_{\mathds{R}^{N}_+}u_m^{(k+1) 2^\ast}\mathrm{d}x \right)^{2/2^\ast}\leq \int_{\mathds{R}^N_+}u^{2k+p}\mathrm{d}x =\int_{\mathds{R}^N_+}u^{2(k+1)}u^{p-2}\mathrm{d}x .
\end{equation*}
Let $\zeta \in [2,2^\ast)$ such that
$$
\frac{2}{\zeta}+\frac{p-2}{2^*}=1.
$$
Applying the Hölder inequality, we obtain 
\begin{equation*}
   \left(\int_{\mathds{R}^N_+}u_m^{(k+1) 2^\ast}\mathrm{d}x \right)^{1/ 2^\ast}\leq C_1\frac{k+1}{(2k+1)^{1/2}} \left(\int_{\mathds{R}^N_+}u^{(k+1)\zeta}\mathrm{d}x \right)^{1/\zeta}.
\end{equation*}
For simplicity, we denote $\|\cdot\|_{s,\mathds{R}^N_+}=\|\cdot\|_s$. Then, letting $m\longrightarrow \infty$, we obtain
\begin{equation}\label{recursive}
    \|u\|_{(k+1) 2^\ast}\leq C_1^{1/(k+1)}\left[\frac{k+1}{(2k+1)^{1/2}}\right]^{1/(k+1)}\|u\|_{(k+1)\zeta},\quad \forall k>0.
\end{equation}
Next, we choose $k=k_1>0$ such that $(k_1+1)\zeta=2^\ast$ and define
$$
(k_{n}+1)\zeta=(k_{n-1}+1)2^\ast, \quad n\geq2.
$$
Notice that 
\begin{equation}\label{kn}
k_n=\left(\frac{2^\ast}{\zeta}\right)^n-1 \quad \text{and}\quad k_n \longrightarrow \infty,
\end{equation}
since $\zeta \in [2, 2^\ast)$, and 
\begin{equation}\label{n induction}
     \|u\|_{(k_n+1) 2^\ast}\leq C_1^{1/(k_n+1)}\left[\frac{k_n+1}{(2k_n+1)^{1/2}}\right]^{1/(k_n+1)}\|u\|_{(k_{n-1}+1) 2^\ast}.
\end{equation}
Thus, using \eqref{n induction} and \eqref{recursive} recursively, it follows that
\begin{equation*}
     \|u\|_{(k_n+1) 2^\ast}\leq C_1^{\sum_{i=1}^n1/(k_i+1)}\prod_{i=1}^n\left[\frac{k_i+1}{(2k_i+1)^{1/2}}\right]^{1/(k_i+1)}\|u\|_{ 2^\ast}.
\end{equation*}
Write
\begin{equation*}
    \left[\frac{k_i+1}{(2k_i+1)^{1/2}}\right]^{1/(k_i+1)}=\left[\left(\frac{k_i+1}{(2k_i+1)^{1/2}}\right)^{1/\sqrt{k_i+1}}\right]^{1/\sqrt{k_i+1}}.
\end{equation*}
Observe that $\left(\frac{k+1}{(2k+1)^{1/2}}\right)^{1/\sqrt{k+1}}>1$ for $k>0$ and $\displaystyle\lim_{k\longrightarrow \infty}\left(\frac{k+1}{(2k+1)^{1/2}}\right)^{1/\sqrt{k+1}}=1$, then there exists $C_2>1$ such that
\begin{equation*}
     \|u\|_{(k_n+1) 2^\ast}\leq C_1^{\sum_{i=1}^n\frac{1} {k_i+1}}C_2^{\sum_{i=1}^n\frac{1}{\sqrt{k_i+1}}}\|u\|_{ 2^\ast}.
\end{equation*}
Now, given that $\zeta\in [2, 2^\ast)$, by \eqref{kn} we have
\begin{equation*}
    \sum_{n=1}^\infty \frac{1}{k_n+1}= \sum_{n=1}^\infty \left(\frac{\zeta}{ 2^\ast}\right)^n<\infty \quad\text{and}\quad \sum_{n=1}^\infty \frac{1}{\sqrt{k_n+1}}=\sum_{n=1}^\infty \left(\sqrt{\frac{\zeta}{ 2^\ast}}\right)^n<\infty.
\end{equation*}
Therefore, taking the limit as $n\longrightarrow \infty$, we obtain
\begin{equation*}
    \|u\|_{\infty}\leq C \|u\|_{ 2^\ast}
\end{equation*}
and hence $u \in L^\infty(\mathds{R}^N_+)$. Set 
\begin{equation}\label{Omega k e Gamma k}
\Omega_k:=\{x\in \overline{\mathds{R}^N_+}: u(x)>k\}\quad\text{and}\quad \Gamma_k:=\{x\in \mathds{R}^{N-1}:u(x)>k\}
\end{equation}
and note that $|\Omega_k|<\infty$, since $u\in L^{2^\ast}(\mathds{R}^N_+)\cap L^2(\mathds{R}^{N-1})$. Choosing $k$ such that $\|u\|_\infty<k$ and taking $T_ku$, given by
\begin{equation*}
    T_ku=\left\lbrace
		\begin{aligned}
			&u-k,\quad&\mbox{in }&\,\Omega_k,
			\vspace{0.2cm}\\
		&0,\quad &\mbox{in }&\,\overline{\mathds{R}^N_+}\setminus\Omega_k,
		\end{aligned}
		\right. 
\end{equation*}
as a testing function, we get
\begin{equation*}
    0=b\int_{\Gamma_k}u^{q-1}T_ku\mathrm{d}x^\prime.
\end{equation*}
Given that $u^{q-1}$ and $T_ku$ are positive in $\Gamma_k$, it remains that $|\Gamma_k|=0$ and $u \in L^\infty(\mathds{R}^{N-1})$, which concludes the first item. 

For item $(ii)$, we can apply a bootstrap argument similar to the previous case, which yields $u \in L^\infty(\mathds{R}^{N-1})$ with $\|u\|_{\infty, \mathds{R}^{N-1}}\leq C\|u\|_{2_\ast, \mathds{R}^{N-1}}$. Finally, choosing $k$ such that $\|u\|_{\infty,\mathds{R}^{N-1}}<k$ and taking again the truncation function $T_ku$ as a testing function, we obtain
\begin{equation*}
    0\leq \int_{\Omega_k}\rho(x_N)|\nabla u|^2\mathrm{d}x=a\int_{\mathds{R}^N_+}u^{p-1}T_ku\mathrm{d}x+b\int_{\mathds{R}^{N-1}}u^{q-1}T_ku\mathrm{d}x^\prime\leq 0,
\end{equation*}
hence, either $u$ is constant or $u\leq k$ a.e. in $\mathds{R}^N_+$ and we have $u \in L^\infty(\mathds{R}^N_+)$, concluding the result.
\end{proof}

\begin{lemma}\label{Linfty 2}
Assume condition $(\rho_0)$ holds with $\gamma\geq 0$. Then, in each of the following cases:
\begin{itemize}
\item[$(i)$] $u$ is a nonnegative weak solution in $E_q(\mathds{R}^{N-1})$, with $a > 0$, $b \leq 0$, $p=2^\ast$ and $q\in(1,\infty)$;
\item[$(ii)$] $u$ is a nonnegative weak solution in $E_p(\mathds{R}^N_+)$, with $a \leq 0$, and $b > 0$, $p\in(1,\infty)$ and $q=2_\ast$,
\end{itemize}
it follows that $u \in L^\infty(\mathds{R}^N_+) \cap L^\infty(\mathds{R}^{N-1})$.
\end{lemma}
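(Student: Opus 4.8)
The plan is to carry out a Moser-type iteration as in the proof of Lemma~\ref{Linfty}, the single new difficulty being that the exponent attaining the critical value ($p=2^\ast$ in case $(i)$, $q=2_\ast$ in case $(ii)$) prevents the iteration from starting directly; this is circumvented by a Brezis--Kato truncation of the nonlinear coefficient. Throughout, since $\gamma\geq 0$, hypothesis $(\rho_0)$ gives $\rho(x_N)\geq 1$, so $\mathcal{D}^{1,2}_\rho(\mathds{R}^N_+)\hookrightarrow\mathcal{D}^{1,2}(\mathds{R}^N_+)$ and, by the classical Sobolev and trace embeddings, $u\in L^{2^\ast}(\mathds{R}^N_+)\cap L^{2_\ast}(\mathds{R}^{N-1})$; in particular $\{u>k\}\cap\mathds{R}^N_+$ and $\{u>k\}\cap\mathds{R}^{N-1}$ have finite measure for every $k>0$.

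Consider first case $(i)$. I would rewrite the interior equation as $-\mathrm{div}(\rho(x_N)\nabla u)=V(x)\,u$ with $V:=a\,u^{2^\ast-2}$; since $(2^\ast-2)\cdot\tfrac N2=2^\ast$, one has $V\in L^{N/2}(\mathds{R}^N_+)$. Because $b\leq 0$ and the test functions below are nonnegative, the boundary term in \eqref{weak solution} is nonpositive and may simply be discarded. For $\beta\geq 1$ and $L>0$ put $w_L:=u\,\min\{u,L\}^{\beta-1}$ and test with the nonnegative function $\varphi:=u\,\min\{u,L\}^{2(\beta-1)}$, which is admissible in $E_q(\mathds{R}^{N-1})$ since $\varphi\leq L^{2(\beta-1)}u$. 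The elementary inequality $\nabla u\cdot\nabla\varphi\geq c_\beta|\nabla w_L|^2$, together with $\rho\geq1$ and the classical Sobolev inequality, yields
\[ c_\beta\,S\Big(\int_{\mathds{R}^N_+}|w_L|^{2^\ast}\,\mathrm{d}x\Big)^{2/2^\ast}\leq \int_{\mathds{R}^N_+}|V|\,w_L^2\,\mathrm{d}x. \]
Given $\varepsilon>0$, absolute continuity of the integral provides $M=M(\varepsilon)$ with $\|V\chi_{\{|V|>M\}}\|_{N/2}<\varepsilon$; estimating $\int_{\{|V|>M\}}|V|w_L^2\leq\varepsilon\|w_L\|_{2^\ast}^2$ by Hölder and absorbing it on the left for $\varepsilon$ small, while $\int_{\{|V|\leq M\}}|V|w_L^2\leq M\|w_L\|_2^2$ is finite (it is bounded by $M\int u^{2\beta}\,\mathrm{d}x$ once $u\in L^{2\beta}$), and then letting $L\to\infty$ by monotone convergence, gives $u\in L^{2^\ast\beta}(\mathds{R}^N_+)$. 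Taking $\beta=2^\ast/2$ (for which the required integrability $u\in L^{2^\ast}$ is already at hand) one obtains $u\in L^{(2^\ast)^2/2}(\mathds{R}^N_+)$, hence $V\in L^{t_0}(\mathds{R}^N_+)$ with $t_0=\tfrac{(2^\ast)^2}{2(2^\ast-2)}=\tfrac{N^2}{2(N-2)}>\tfrac N2$. From this point the coefficient is \emph{subcritical}, and the iteration of Lemma~\ref{Linfty} (with the power $u^{p-2}$ there replaced by $V\in L^{t_0}$, $t_0>N/2$, and the boundary term again discarded) closes with the constants under control exactly as there, producing $\|u\|_{\infty,\mathds{R}^N_+}\leq C\|u\|_{2^\ast,\mathds{R}^N_+}$. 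Finally, choosing $k>\|u\|_{\infty,\mathds{R}^N_+}$ and testing with $(u-k)^+$, exactly as at the end of the proof of Lemma~\ref{Linfty}, case $(i)$, forces $|\{u>k\}\cap\mathds{R}^{N-1}|=0$, so that $u\in L^\infty(\mathds{R}^{N-1})$ as well.

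Case $(ii)$ is handled symmetrically, with the roles of the interior and the boundary exchanged: one writes the boundary condition as $-\partial u/\partial x_N=W(x')\,u$ with $W:=b\,u^{2_\ast-2}\in L^{N-1}(\mathds{R}^{N-1})$ (here $(2_\ast-2)(N-1)=2_\ast$), discards the interior term since $a\leq 0$, and runs the same truncated iteration using the classical trace inequality \eqref{classical trace embedding} in place of the Sobolev inequality and the Brezis--Kato splitting of $W$ in $L^{N-1}(\mathds{R}^{N-1})$ (Hölder with conjugate exponents $N-1$ and $2_\ast/2$). This gives $u\in L^\infty(\mathds{R}^{N-1})$; then, choosing $k>\|u\|_{\infty,\mathds{R}^{N-1}}$ so that $\{u>k\}\cap\mathds{R}^{N-1}$ is null and testing \eqref{PG} with $(u-k)^+$, one obtains $\int_{\{u>k\}}\rho(x_N)|\nabla u|^2\,\mathrm{d}x=a\int_{\{u>k\}}u^{p-1}(u-k)\,\mathrm{d}x\leq 0$, so that $u$ is constant or $u\leq k$ a.e.\ in $\mathds{R}^N_+$; in either case $u\in L^\infty(\mathds{R}^N_+)$, exactly as in the proof of Lemma~\ref{Linfty}, case $(ii)$.

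The main obstacle is precisely this absorption step in the critical regime: with $p=2^\ast$ (resp.\ $q=2_\ast$) the quantity on the right-hand side of the estimate produced by the test function $\varphi$ is exactly of critical Sobolev (resp.\ trace) type, so it cannot be absorbed by the Sobolev (resp.\ trace) inequality on the left, and the iteration of Lemma~\ref{Linfty} does not even start. Truncating the coefficient $V=a\,u^{2^\ast-2}\in L^{N/2}$ (resp.\ $W=b\,u^{2_\ast-2}\in L^{N-1}$) into a bounded part and an arbitrarily $L^{N/2}$- (resp.\ $L^{N-1}$-) small part is what allows the first gain of integrability; after this one gain the coefficient lies in a subcritical Lebesgue space and the remaining, routine but constant-sensitive, iteration proceeds as in Lemma~\ref{Linfty}. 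A minor additional point is to check at each stage that the truncated test functions belong to $E_q(\mathds{R}^{N-1})$ (resp.\ $E_p(\mathds{R}^N_+)$) and that all the integrals involved are finite, which holds since every such test function is dominated by a fixed power of $L$ times $u$.
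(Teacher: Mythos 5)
Your proposal is correct and takes essentially the same approach as the paper: a Moser-type iteration with a linearized-above-level truncation of the power test function, where the iteration is started at the critical exponent by splitting off a small piece and absorbing it into the Sobolev (resp.\ trace) term on the left. Your framing via a Brezis--Kato decomposition of the coefficient $V=a\,u^{2^\ast-2}\in L^{N/2}$ (small $L^{N/2}$ part plus bounded part) is mathematically the same as the paper's split of $\int\varphi(u)^2u^{2^\ast-2}$ over $\{u>R\}$ and $\{u\le R\}$, and the subsequent iteration and boundary-level-set argument match the paper's.
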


\begin{proof} Let us prove the first item. By the classical Sobolev inequality together with assumption $(\rho_0)$, we also have $E_q(\mathds{R}^{N-1})\hookrightarrow \mathcal{D}^{1,2}_\rho(\mathds{R}^N_+)\hookrightarrow L^{2^\ast}(\mathds{R}^N_+)$. Then, arguing by density the definition of weak solution \eqref{weak solution} holds for testing functions in $E_q(\mathds{R}^{N-1})$. Let $\varphi:[0,\infty)\longrightarrow \mathds{R}$ defined by 
\begin{equation*}
    \varphi(t)= \varphi_{m,k}(t)=\left\lbrace
		\begin{aligned}
			&t^k,\quad&\mbox{for }&\,t\leq m\\
		&km^{k-1}(t-m)+m^k,\quad &\mbox{for}&\,t\geq m,
		\end{aligned}
		\right. 
\end{equation*}
for $k>1$ and $m>0$.
Then we have
\begin{equation*}
    \varphi^\prime(t)=\left\lbrace
		\begin{aligned}
			&kt^{k-1},\quad&\mbox{for }&\,t\leq m\\
		&km^{k-1},\quad &\mbox{for}&\,t\geq m.
		\end{aligned}
		\right. 
\end{equation*}
Take $\varphi(u)\varphi'(u)$ as a testing function in \eqref{weak solution}, then, given that $\varphi(u)\varphi'(u)\geq0$ and $b\leq 0$, we obtain 
\begin{align}\label{first inequality}
   \nonumber \int_{\mathds{R}^{N}_+}\rho(x_N)\nabla u \nabla (\varphi(u)\varphi^\prime(u)) \, \mathrm{d} x&=a\int_{\mathds{R}^{N}_+}u^{2^\ast-1}\varphi(u)\varphi^\prime(u) \mathrm{d}x  +b\int_{\mathds{R}^{N-1}}u^{q-1} \varphi(u)\varphi^\prime(u) \, \mathrm{d} x\\
    &\leq a\int_{\mathds{R}^{N}_+}u^{2^\ast-1} \varphi(u)\varphi^\prime(u) \, \mathrm{d} x.
\end{align}
Now, observe that
\begin{equation*}
    \nabla \varphi(u)=\left\lbrace
		\begin{aligned}
			&ku^{k-1}\nabla u,\quad&\mbox{for }&\,u\leq m\\
		&km^{k-1}\nabla u,\quad &\mbox{for}&\,u\geq m
		\end{aligned}
		\right. 
\end{equation*}
and
\begin{equation*}
    \nabla (\varphi(u)\varphi^\prime(u))=\left\lbrace
		\begin{aligned}
			&k(2k-1)u^{(2k-1)}\nabla u,\quad&\mbox{for }&\,u\leq m\\
		&k^2 m^{2(k-1)}\nabla u,\quad &\mbox{for }&\,u\geq m.
		\end{aligned}
		\right. 
\end{equation*}
Then $\nabla u \nabla (\varphi(u)\varphi^\prime(u))\geq |\nabla \varphi(u)|^2$ and
\begin{equation}\label{LHS}
    \int_{\mathds{R}^N_+}\rho(x_N)\nabla u\nabla(\varphi(u)\varphi^\prime(u))\mathrm{d}x\geq \int_{\mathds{R}^N_+}\rho(x_N)|\nabla \varphi(u)|^2\mathrm{d}x\geq C\left(\int_{\mathds{R}^{N}_+}\varphi(u)^{2^\ast}\mathrm{d}x\right)^{2/2^\ast}.
\end{equation}
On the other hand, since $u\varphi^\prime(u)\leq k\varphi(u)$, we have
\begin{equation*}
    \int_{\mathds{R}^{N}_+}u^{2^\ast-1}\varphi(u)\varphi^\prime(u)\mathrm{d}x\leq k\int_{\mathds{R}^{N}_+}\varphi(u)^2u^{2^\ast-2}\mathrm{d}x.
\end{equation*}
Thus, applying the inequality above and \eqref{LHS} in \eqref{first inequality} we obtain
\begin{equation}\label{varphi inequality}
    \left(\int_{\mathds{R}^N_+}\varphi(u)^{2^\ast}\mathrm{d}x\right)^{2/2^\ast}\leq C_1k\int_{\mathds{R}^N_+}\varphi(u)^2u^{2^\ast-2}\mathrm{d}x, \quad\forall k>1.
\end{equation}

\textbf{Claim:} Let $k_1$ such that $2k_1=2^\ast$. Then $u\in L^{k_12^\ast}(\mathds{R}^N_+)$.

Let $R>0$ to be determined later. By Holder's inequality we obtain
\begin{align*}
    \int_{\mathds{R}^N_+}\varphi(u)^2u^{2^\ast-2}\mathrm{d}x=&\int_{\{u\leq R\}}\varphi(u)u^{2^\ast-2}\mathrm{d}x+\int_{\{u>R\}}\varphi(u)^2u^{2^\ast-2}\mathrm{d}x\\
    \leq& R^{2^\ast-2}\int_{\mathds{R}^N_+}\varphi(u)^2\mathrm{d}x+\left(\int_{\mathds{R}^N_+}\varphi(u)^{2^\ast}\mathrm{d}x\right)^{2/2^\ast}\left(\int_{\{u>R\}}u^{2^\ast}\mathrm{d}x\right)^{(2^\ast-2)/2^\ast}.
\end{align*}
By dominated convergence Theorem we can choose $R>0$ such that 
\begin{equation*}
    \left(\int_{\{u>R\}}u^{2^\ast}\mathrm{d}x\right)^{(2^\ast-2)/2^\ast} \leq \frac{1}{2C_1k_1}.
\end{equation*}
Thus,
\begin{equation*}
    \int_{\mathds{R}^N_+}\varphi(u)^2u^{2^\ast-2}\mathrm{d}x\leq R^{2^\ast-2}\int_{\mathds{R}^N_+}\varphi(u)^2\mathrm{d}x+\frac{1}{2C_1k_1}\left(\int_{\mathds{R}^N_+}\varphi(u)^{2^\ast}\mathrm{d}x\right)^{2/2^\ast}.
\end{equation*}
Applying this inequality in \eqref{varphi inequality} we obtain
\begin{equation*}
    \left(\int_{\mathds{R}^N_+}\varphi(u)^{2^\ast}\mathrm{d}x\right)^{2/2^\ast}\leq C_1k_1 R^{2^\ast-2}\int_{\mathds{R}^N_+}\varphi(u)^2\mathrm{d}x.
\end{equation*}
Hence, given that $\varphi_{m,k}(u)\leq u^{k}$, letting $m\longrightarrow\infty$, by dominated convergence Theorem we obtain
\begin{equation*}
     \left(\int_{\mathds{R}^N_+}u^{k_12^\ast}\mathrm{d}x\right)^{2/2^\ast} \leq C_1k_1R^{2^\ast-2}\int_{\mathds{R}^N_+}u^{2^\ast}\mathrm{d}x<\infty
\end{equation*}
and we conclude the claim.

Applying dominated convergence Theorem again in \eqref{varphi inequality} we obtain
\begin{equation*}
    \left(\int_{\mathds{R}^N_+}u^{k2^\ast}\mathrm{d}x\right)^{2/2^\ast}\leq C_1 k \int_{\mathds{R}^N_+}u^{2k+2^\ast-2}\mathrm{d}x,
\end{equation*}
and consequently
\begin{equation}\label{bootstrap}
    \left(\int_{\mathds{R}^N_+}u^{k2^\ast}\mathrm{d}x\right)^{1/2^\ast(k-1)}\leq (C_1k)^{1/2(k-1)} \left(\int_{\mathds{R}^N_+} u^{2k
    +2^\ast-2}\mathrm{d}x\right)^{1/2(k-1)},\quad \forall k>1.
\end{equation}
Define $2k_{n}+2^\ast-2=2^\ast k_{n-1}$, then
\begin{equation*}
k_n-1=\frac{2^\ast}{2}(k_{n-1}-1)=\left(\frac{2^\ast}{2}\right)^{n-1}(k_1-1)
\end{equation*}
and $k_n\longrightarrow \infty$. Therefore, we have
\begin{equation*}
    \left(\int_{\mathds{R}^N_+}u^{k_n2^\ast}\mathrm{d}x\right)^{1/2^\ast(k_n-1)}\leq (C_1k_n)^{1/2(k_n-1)} \left(\int_{\mathds{R}^N_+} u^{k_{n-1}2^\ast}\mathrm{d}x\right)^{1/2^\ast(k_{n-1}-1)},
\end{equation*}
and applying recursively the inequality \eqref{bootstrap} we obtain
\begin{equation*}
    \left(\int_{\mathds{R}^N_+}u^{k_n2^\ast}\mathrm{d}x\right)^{1/2^\ast(k_n-1)}\leq \prod_{i=2}^n(C_1k_i)^{1/2(k_i-1)} \left(\int_{\mathds{R}^N_+} u^{k_{1}2^\ast}\mathrm{d}x\right)^{1/2^\ast(k_{1}-1)}.
\end{equation*}
Write $k_i^{1/2(k_i-1)}=\left[k_i^{1/\sqrt{k_i+1}}\right]^{1/2\sqrt{k_i+1}}$ and note that $\displaystyle\lim_{k\longrightarrow \infty}k^{1/\sqrt{k-1}}=1$, then there exists $C_2>1$ such that
\begin{equation*}
  \left(\int_{\mathds{R}^N_+}u^{k_n2^\ast}\mathrm{d}x\right)^{1/2^\ast(k_n-1)}\leq C_1^{\sum_{i=2}^n{\frac{1}{2(k_i-1)}}} C_2^{\sum_{i=2}^n{\frac{1}{2\sqrt{k_i-1}}}} \left(\int_{\mathds{R}^N_+} u^{k_{1}2^\ast}\mathrm{d}x\right)^{1/2^\ast(k_{1}-1)}.
  \end{equation*}
Since
\begin{equation*}
    \sum_{n=2}^\infty \frac{1}{2(k_n-1)}=\frac{1}{2(k_1-1)} \sum_{n=2}^\infty \left(\frac{2}{ 2^\ast}\right)^{n-1}<\infty  
\end{equation*}
and
\begin{equation*}
    \sum_{n=2}^\infty \frac{1}{2\sqrt{k_n-1}}=\frac{1}{2\sqrt{k_1-1}}\sum_{n=2}^\infty \left(\sqrt{\frac{2}{ 2^\ast}}\right)^{n-1}<\infty,
\end{equation*}
we obtain
\begin{equation*}
    \left(\int_{\mathds{R}^N_+}u^{k_n2^\ast}\mathrm{d}x\right)^{1/2^\ast(k_n-1)}\leq C_0\|u\|_{k_12^\ast,\mathds{R}^N_+}^{\frac{k_1}{k_1-1}}.
\end{equation*}
Therefore, letting $n\longrightarrow \infty$ we obtain $u\in L^{\infty}(\mathds{R}^N_+)$ with $\|u\|_{\infty, \mathds{R}^N_+}\leq C_0 \|u\|_{k_12^\ast,\mathds{R}^N_+}^{\frac{k_1}{k_1-1}}$. Finally, arguing as in Lemma \ref{Linfty}, we obtain $u\in L^\infty(\mathds{R}^{N-1})$ and conclude the first item.

For the second item, a similar argument shows that $u \in L^\infty(\mathds{R}^{N-1})$, with the bound
\begin{equation*}
\|u\|_{\infty,\mathds{R}^{N-1}}\leq C_0 \|u\|_{k_12_\ast,\mathds{R}^{N-1}}^{\frac{k_1}{k_1-1}}.
\end{equation*}
Finally, following the argument at the end of the second item in Lemma~\ref{Linfty}, we conclude that $u\in L^\infty(\mathds{R}^N_+)$ and complete case $(ii)$. 
\end{proof}

\begin{lemma}\label{Lib} Assume $(\rho_0)$ with $\rho\in C^1[0,\infty)$. Then, if $u$ is a bounded weak solution of \eqref{PG}, then $u \in C^{1,\alpha}_{\mathrm{loc}}(\overline{\mathds{R}^N_+})$.
 \end{lemma}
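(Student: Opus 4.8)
The plan is to localize and to view \eqref{PG} as a linear uniformly elliptic equation in divergence form with a Lipschitz leading coefficient and a bounded right-hand side, supplemented by a Neumann condition with bounded datum, and then to invoke the classical De Giorgi--Nash--Moser and Schauder theories, first in the interior of $\mathds{R}^N_+$ and afterwards --- with extra care --- up to the flat boundary $\mathds{R}^{N-1}$.

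First I would record the structural facts that make this reduction legitimate. On any compact $K\subset\overline{\mathds{R}^N_+}$, the continuity of $\rho$ together with $(\rho_0)$ gives $1\le\rho(x_N)\le\max_K\rho<\infty$, so $-\mathrm{div}(\rho(x_N)\nabla\,\cdot\,)$ is uniformly elliptic on $K$; moreover $\rho\in C^1[0,\infty)$ forces $\rho$ to be locally Lipschitz, hence $\rho\in C^{0,\tau}_{\mathrm{loc}}$ for every $\tau\in(0,1)$. Since $u$ is bounded, the same bound passes to its trace on $\mathds{R}^{N-1}$, so the interior source $f:=a|u|^{p-2}u$ belongs to $L^\infty_{\mathrm{loc}}(\mathds{R}^N_+)$ and the boundary datum $h:=b|u|^{q-2}u$ belongs to $L^\infty_{\mathrm{loc}}(\mathds{R}^{N-1})$. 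Finally, because $\rho(0)=1$, the conormal derivative appearing in the weak formulation \eqref{weak solution} equals $\partial u/\partial x_N$ on $\mathds{R}^{N-1}$, so \eqref{weak solution} is precisely the weak form of the Neumann problem $-\mathrm{div}(\rho(x_N)\nabla u)=f$ in $\mathds{R}^N_+$, $-\partial u/\partial x_N=h$ on $\mathds{R}^{N-1}$.

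For the interior estimate, fix $B_{2r}(x_0)\subset\subset\mathds{R}^N_+$. By the De Giorgi--Nash--Moser theorem $u\in C^{0,\beta}_{\mathrm{loc}}(\mathds{R}^N_+)$ for some $\beta\in(0,1)$; then, since $t\mapsto|t|^{p-2}t$ is locally H\"older continuous, $f\in C^{0,\beta'}_{\mathrm{loc}}(\mathds{R}^N_+)$ for some $\beta'\in(0,1)$, and the $C^{1,\alpha}$ Schauder estimate for divergence-form equations with H\"older coefficients yields $u\in C^{1,\alpha}_{\mathrm{loc}}(\mathds{R}^N_+)$ for some $\alpha\in(0,1)$ (in fact $f\in L^\infty_{\mathrm{loc}}$ and $\rho$ Lipschitz already suffice).

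The hard part is the regularity up to $\mathds{R}^{N-1}$, and I would split it into two steps. \textbf{Step 1 (boundary H\"older continuity).} On half-balls $B^+_{2r}(x_0)$, $x_0\in\mathds{R}^{N-1}$, run the boundary De Giorgi--Nash--Moser iteration for the Neumann problem, using test functions that vanish only on the spherical part $\partial B_{2r}(x_0)$ and absorbing the boundary integral involving $h$ via the trace inequality \eqref{Hardy-L}, in the same spirit as the iterations already performed in Lemmas~\ref{Linfty} and~\ref{Linfty 2}; this produces $u\in C^{0,\beta}(\overline{B^+_r(x_0)})$ for some $\beta\in(0,1)$, hence $u\in C^{0,\beta}_{\mathrm{loc}}(\overline{\mathds{R}^N_+})$. (Alternatively, one may extend $\rho$ and $u$ evenly across $\{x_N=0\}$ and reduce, up to a source concentrated on the hyperplane coming from $h$, to the interior situation.) \textbf{Step 2 (boundary Schauder).} With $u$ now H\"older continuous up to the boundary, $f=a|u|^{p-2}u\in C^{0,\beta'}_{\mathrm{loc}}(\overline{\mathds{R}^N_+})$ and $h=b|u|^{q-2}u\in C^{0,\beta'}_{\mathrm{loc}}(\mathds{R}^{N-1})$ for some $\beta'\in(0,1)$; applying the $C^{1,\sigma}$ boundary estimate for the divergence-form Neumann (oblique-derivative) problem with $C^{0,\tau}$ leading coefficient on $B^+_{2r}(x_0)$ gives $u\in C^{1,\sigma}(\overline{B^+_{r/2}(x_0)})$. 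Covering $\overline{\mathds{R}^N_+}$ by interior balls and boundary half-balls and taking $\alpha$ to be the smallest exponent produced then gives $u\in C^{1,\alpha}_{\mathrm{loc}}(\overline{\mathds{R}^N_+})$. The only genuinely delicate point is making Step~1 rigorous for the \emph{inhomogeneous} Neumann datum $h$ (equivalently, controlling the hyperplane-concentrated source in the reflected formulation); everything else is a routine application of standard elliptic regularity.
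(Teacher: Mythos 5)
Your argument is correct in outline but follows a genuinely different (and much longer) route than the paper: the paper's entire proof is a one-line citation to Lieberman \cite{Lieberman}, specifically Theorem~2 of that paper, which is a packaged $C^{1,\alpha}$-up-to-the-boundary regularity theorem for bounded weak solutions of divergence-form (in fact quasilinear, possibly degenerate) equations with conormal/Neumann boundary conditions. What you propose is, in effect, a reconstruction of the linear case of that theorem from scratch: interior De~Giorgi--Nash--Moser plus interior Schauder, then a boundary De~Giorgi iteration for the Neumann problem with $L^\infty$ data to get $C^{0,\beta}$ up to the boundary, then boundary Schauder for the conormal problem. The trade-off is clear: the paper's citation is economical and black-boxes the hard part, whereas your scheme exposes exactly where the difficulty lies (your Step~1, boundary H\"older continuity with an inhomogeneous Neumann datum, equivalently the hyperplane-concentrated source after even reflection), but you leave that step at the level of a sketch rather than carrying out the iteration or locating a precise reference for it. If your goal is a self-contained proof, Step~1 is the part that would actually need to be written out; if the goal is matching the paper's economy, a single appeal to Lieberman's theorem (or, for the linear case, to the conormal-derivative Schauder theory in Gilbarg--Trudinger/Lieberman after first obtaining boundary H\"older continuity) does the whole job. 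One small caveat in your reflection remark: after even extension, $\rho(|x_N|)$ is Lipschitz but generally not $C^1$ across $\{x_N=0\}$ unless $\rho'(0)=0$, which is why the reflected formulation still needs the divergence-form/Lipschitz-coefficient machinery rather than classical $C^{1,\alpha}$-coefficient Schauder; this does not break your plan, but it is worth being explicit about.
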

 \begin{proof} The proof follows by application of \cite[Theorem 2]{Lieberman}.
\end{proof}

Now we follow with the proof of Theorem \ref{Holder regularity result}

\begin{proof}[Proof of Theorem \ref{Holder regularity result}:] Let $u \in \mathcal{D}^{1,2}_\rho(\mathds{R}^N_+)$ be a nonnegative weak solution of \eqref{PG}. In any of the cases $(i)-(iv)$, applying the Lemmas \ref{Linfty}, \ref{Linfty 2} and \ref{Lib} we obtain that $u \in C^{1,\alpha}_{\mathrm{loc}}(\overline{\mathds{R}^N_+})$ for some $\alpha\in (0,1)$. Furthermore, since $u^{p-1}$ is bounded, by the estimates in \cite{ADN 1} we have $u \in W^{2,r}_{\mathrm{loc}}(\mathds{R}^N_+)$ for any $r>1$.

Applying the Harnack inequality from \cite[Theorem 1.1]{Trudinger}, we conclude that $u>0$ in $\mathds{R}^N_+$. Furthermore, using the inequality $|s^{p-1}-t^{p-1}|\leq M|s-t|$ that holds locally in $(0,\infty)$, we deduce that locally $|u(x)^{p-1}-u(y)^{p-1}|\leq C|x-y|^\alpha$, so that $u^{p-1}\in C^{0,\alpha}_{\mathrm{loc}}(\mathds{R}^N_+)$. Since $\rho\in C^{1,\alpha}_{\mathrm{loc}}[0,\infty)$, by \cite[Theorem 9.19]{GT}, we conclude that $u \in C^{2,\alpha}_{\mathrm{loc}} (\mathds{R}^N_+)$. Then, by the Hopf lemma (see \cite[Lemma 3.4]{GT}) we have $u > 0$ in $\overline{\mathds{R}^N_+}$ and then $u^{p-1} \in C^{0,\alpha}_{\mathrm{loc}}(\overline{\mathds{R}^N_+})$. Therefore, applying \cite[Theorem 9.19]{GT} once more, we obtain $u \in C^{2,\alpha}_{\mathrm{loc}}(\overline{\mathds{R}^N_+})$.

Finally, in the case where $\gamma\geq 2$ and $a,b>0$, let $s>2$ such that $s>N/2$. By Lemma \ref{u^r belongs D12}, we have $u^{\frac{s(p-1)}{2}}\in \mathcal{D}^{1,2}_\rho(\mathds{R}^N_+)$. Moreover, since $\gamma\geq 2$, it follows from Lemma \ref{embedding} that $u^{\frac{s(p-1)}{2}}\in L^2(\mathds{R}^N_+)$. Hence, $u^{p-1}\in L^s(\mathds{R}^N_+)$, and by the estimates in \cite{ADN 1}, we deduce that $u \in W^{2,s}_{\mathrm{loc}}(\mathds{R}^N_+)$ and then $u\in C^{1,\alpha}_{\mathrm{loc}} (\mathds{R}^N_+)$ for some $\alpha\in(0,1)$. Then, arguing as in the previous cases we obtain $u \in C^{2,\alpha}_{\mathrm{loc}} (\mathds{R}^N_+)$ and $u>0$ in $\mathds{R}^N_+$.
\end{proof}

\subsection{$H^2_{\mathrm{loc}}$-Regularity} In this part, we employ the method of difference quotients to establish $H^2_{\mathrm{loc}}(\overline{\mathds{R}^N_+})$ regularity.

The following lemma is standard, but we include a proof for the sake of completeness and clarity.

\begin{lemma}\label{limit dq}If $u \in \mathcal{D}^{1,2}(\mathds{R}^N_+)$, then, for $i=1,\cdots,N-1$, $D_h^i(u) \longrightarrow \frac{\partial u}{\partial x_i}$ in $L^2(\mathds{R}^N_+)$ as $h \longrightarrow 0$, where $D_h^i(u)$ is a differential quotient given by
\begin{equation*}
D_h^i(u)=\frac{u(x+he_i)-u(x)}{h}.
\end{equation*}
\end{lemma}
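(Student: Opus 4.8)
The plan is the standard two-step argument: establish the claim first for $v \in C^\infty_\delta(\mathds{R}^N_+)$ via the fundamental theorem of calculus, and then pass to a general $u$ by density. The one point that needs attention is that the difference quotient is taken in a \emph{tangential} direction ($i \leq N-1$), so the segment $[x,x+he_i]$ stays inside $\overline{\mathds{R}^N_+}$ and every integral below remains over $\mathds{R}^N_+$; this is exactly why the index $i=N$ is excluded.

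First I would record, for every $v \in \mathcal{D}^{1,2}(\mathds{R}^N_+)$, the uniform bound
\[
\left\| D_h^i v \right\|_{2,\mathds{R}^N_+} \;\leq\; \left\| \frac{\partial v}{\partial x_i} \right\|_{2,\mathds{R}^N_+}.
\]
For $v \in C^\infty_\delta(\mathds{R}^N_+)$ this follows by writing $D_h^i v(x) = \tfrac1h \int_0^h \frac{\partial v}{\partial x_i}(x+te_i)\,\mathrm{d}t$, applying Jensen's inequality in the variable $t$, integrating in $x$, and using Fubini together with the translation invariance of Lebesgue measure in the $x_i$-direction. For a general $v$ one chooses $v_n \in C^\infty_\delta(\mathds{R}^N_+)$ with $\|\nabla(v_n-v)\|_{2,\mathds{R}^N_+}\to 0$; then $D_h^i v_n \to D_h^i v$ in $L^{2^\ast}(\mathds{R}^N_+)$ by the Sobolev embedding, hence a.e.\ along a subsequence, and Fatou's lemma upgrades the inequality to all of $\mathcal{D}^{1,2}(\mathds{R}^N_+)$ — in particular it shows $D_h^i v \in L^2(\mathds{R}^N_+)$, so that the statement makes sense.

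Next, for $v \in C^\infty_\delta(\mathds{R}^N_+)$ the same representation gives
\[
\left\| D_h^i v - \frac{\partial v}{\partial x_i} \right\|_{2,\mathds{R}^N_+} \;\leq\; \sup_{|t|\leq |h|}\left\| \frac{\partial v}{\partial x_i}(\cdot + t e_i) - \frac{\partial v}{\partial x_i} \right\|_{2,\mathds{R}^N_+} \;\xrightarrow[h\to 0]{}\; 0,
\]
by continuity of translations in $L^2(\mathds{R}^N_+)$. Finally, given $u \in \mathcal{D}^{1,2}(\mathds{R}^N_+)$ and $\varepsilon>0$, I would pick $v \in C^\infty_\delta(\mathds{R}^N_+)$ with $\|\nabla(u-v)\|_{2,\mathds{R}^N_+}<\varepsilon$ and split
\[
\left\| D_h^i u - \frac{\partial u}{\partial x_i} \right\|_{2,\mathds{R}^N_+} \leq \left\| D_h^i(u-v) \right\|_{2,\mathds{R}^N_+} + \left\| D_h^i v - \frac{\partial v}{\partial x_i} \right\|_{2,\mathds{R}^N_+} + \left\| \frac{\partial(v-u)}{\partial x_i} \right\|_{2,\mathds{R}^N_+}.
\]
By the bound of the first step the first and third terms are each at most $\varepsilon$, while the middle term tends to $0$ as $h\to 0$; letting $h\to 0$ and then $\varepsilon\to 0$ concludes. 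The only genuine obstacle here is bookkeeping — keeping all translations and changes of variable within $\mathds{R}^N_+$, which is precisely what forces $i\leq N-1$; everything else is routine.
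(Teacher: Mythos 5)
Your proof is correct, and it differs from the paper's argument in the final step. Both proofs establish the same key uniform bound $\|D_h^i v\|_{2,\mathds{R}^N_+}\leq \|\partial v/\partial x_i\|_{2,\mathds{R}^N_+}$ via the fundamental theorem of calculus, Jensen, Fubini, and tangential translation invariance of the half-space, and then extend it to all of $\mathcal{D}^{1,2}(\mathds{R}^N_+)$ by density (the paper via a Cauchy-sequence argument, you via $L^{2^\ast}$-convergence, a.e.\ extraction, and Fatou — essentially equivalent). The divergence is in how each concludes strong $L^2$ convergence: the paper first shows $D_h^i u\to \partial u/\partial x_i$ in the sense of distributions, deduces weak $L^2$ convergence from boundedness, establishes convergence of norms, and invokes the Radon--Riesz property of the uniformly convex space $L^2$ to upgrade weak to strong. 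You instead prove the convergence directly for $v\in C^\infty_\delta(\mathds{R}^N_+)$ from continuity of translations in $L^2$, and run the standard $\varepsilon/3$ density argument, with the first and third terms controlled by the uniform bound. Your route is more elementary and shorter — it avoids the detour through distributional convergence, weak compactness, and uniform convexity — at the cost of needing the explicit quantitative estimate for smooth functions; the paper's route is a somewhat heavier but perfectly valid soft-analysis alternative. Both are correct.
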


\begin{proof} First, observe that for any $u\in \mathcal{D}^{1,2}(\mathds{R}^N_+)$, we have $D_h^i(u)\longrightarrow \partial u/\partial x_i$ in distributional sense. In fact, given $\varphi \in C^\infty_0(\mathds{R}^N_+)$, we obtain
\begin{equation*}
    \int_{\mathds{R}^N_+}D_h^i(u)\varphi\mathrm{d}x=- \int_{\mathds{R}^N_+}uD_{-h}^i(\varphi)\mathrm{d}x\longrightarrow -\int_{\mathds{R}^N_+}u\frac{\partial\varphi}{\partial x_i}\mathrm{d}x= \int_{\mathds{R}^N_+}\frac{\partial u}{\partial x_i}\varphi \mathrm{d}x,\quad \text{as $h\longrightarrow 0$}.
\end{equation*}
Now, we aim to prove that for all $h\in \mathds{R}$ we have
\begin{equation}\label{Dh L2 inequality}
\int_{\mathds{R}^N_+}|D_h^i(u)|^2\mathrm{d}x\leq \int_{\mathds{R}^N_+}\left|\frac{\partial u}{\partial x_i}\right|^2\mathrm{d}x, \quad \forall u\in \mathcal{D}^{1,2}(\mathds{R}^N_+).
\end{equation}
First, consider $u\in C^\infty_\delta(\mathds{R}^N_+)$, fix $x\in \mathds{R}^N_+$ and define $g(t)=u(x+the_i)$, for $t \in [0,1]$. Then, $g^\prime(t)=he_i \cdot\nabla u(x+the_i)$ and
\begin{equation*}
    D_h^i(u)=\frac{1}{h}\int_0^1 g^\prime(t)\mathrm{d}t=\int_0^1 \frac{\partial u}{\partial x_i}(x+the_i)\mathrm{d}t.
\end{equation*}
 Then, by the Jensen inequality (see \cite[Theorem 3.3]{Rudin}) we obtain
 \begin{equation*}
|D_h^i u|^2\mathrm{d}x\leq \int_0^1 \left|\frac{\partial u}{\partial x_i}(x+the_i)\right|^2\mathrm{d}t,
 \end{equation*}
thus, integrating in $\mathds{R}^N_+$, using the Fubini Theorem and a change of variables we obtain 
\eqref{Dh L2 inequality} for $u\in C^\infty_\delta(\mathds{R}^N_+)$. Now, consider $u\in \mathcal{D
}^{1,2}(\mathds{R}^N_+)$ and $(u_n)\subset C^\infty_\delta(\mathds{R}^N_+)$ such that $u_n \longrightarrow u$ in $\mathcal{D}^{1,2}(\mathds{R}^N_+)$. In this case, we have $u_n\longrightarrow u$ a.e. in $\mathds{R}^N_+$ and hence $D_h^i(u_n)\longrightarrow D_h^i(u)$ a.e. in $\mathds{R}^N_+$. From inequality \eqref{Dh L2 inequality} for smooth functions it follows that $(D_h^i(u_n))$ is a Cauchy sequence in $L^2(\mathds{R}^N_+)$ and we must have $D_h^i(u_n)\longrightarrow D_h^i(u)$ in $L^2(\mathds{R}^N_+)$ as $h\longrightarrow 0$. Then \eqref{Dh L2 inequality} holds by density.

Finally, by \eqref{Dh L2 inequality} and the distributional convergence shown earlier, we may assume that $D_h^iu\rightharpoonup \frac{\partial u}{\partial x_i}$ in $L^2(\mathds{R}^N_+)$. Therefore, applying the weak lower semicontinuity of the norm and using again \eqref{Dh L2 inequality}, we obtain
\begin{equation*}
   \left\| \frac{\partial u}{\partial x_i} \right\|_{2,\mathds{R}^N_+}\leq \liminf_{h\longrightarrow 0} \|D_h^i u\|_{2,\mathds{R}^N_+}\leq  \limsup_{h\longrightarrow 0} \|D_h^i u\|_{2,\mathds{R}^N_+}\leq \left\| \frac{\partial u}{\partial x_i} \right\|_{2,\mathds{R}^N_+}.
\end{equation*}
Hence, by the uniform convexity $L^2(\mathds{R}^N_+)$, we conclude the result.
\end{proof}

\begin{lemma}\label{Harnack inequality}
		Assume $(\rho_0)$ with $\rho \in C^1[0,\infty)$ and $\gamma>1$, $a\leq0$, $b>0$, $p\in(1,\infty)$ and $q\in [2,2_\ast]$. Let $u\in E_p(\mathds{R}^N_+) $ be a weak solution of problem \eqref{PG} satisfying $0< u \leq M$ in the half-ball $B_{3\delta}^{+}$ with $1<\delta<2$. Then, there exist constants $C=C(N,M)$ and $\theta_0>3$ such that
		\begin{equation}\label{H inequality}
		\max_{B_1^+}u+\max_{\Gamma_1}u\leq C\left(\| u \|_{\theta_0,B_{2}^+}^{\theta_0}+\| u \|_{{\theta_0},\Gamma_{2}}^{\theta_0} \right)^{1/\theta_0}.
		\end{equation}
		In particular, 
		\begin{equation}\label{decay}
		\lim_{|x^\prime|\longrightarrow +\infty}u(x^\prime)=0\quad \text{for}\quad x^\prime\in\mathds{R}^{N-1}.
		\end{equation}
	\end{lemma}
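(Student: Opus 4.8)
The plan is to run a Moser iteration up to the flat boundary of the half-ball, exploiting the sign $a\le0$ and the a priori bound $u\le M$, and then to deduce the decay \eqref{decay} by a translation argument.

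\emph{Step 1: reduction to a uniformly elliptic boundary problem.} First I would note that on $B_{3\delta}^{+}\subset\{0\le x_N\le 6\}$ the weight is uniformly elliptic: since $\rho\in C^1[0,\infty)$ and $1\le\rho(x_N)$, there are $0<\lambda\le\Lambda$ depending only on $\rho$ with $\lambda\le\rho(x_N)\le\Lambda$ there. Since $a\le0$ and $u\ge0$, the interior term $a\,u^{p-1}$ is $\le0$ and may be discarded in every upper bound. Since $0<u\le M$ and $q\ge2$, one has $b\,u^{q-1}u^{2\beta-1}=b\,u^{q-2}u^{2\beta}\le bM^{q-2}u^{2\beta}$, so the Neumann data behaves like a bounded Robin coefficient. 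Thus $u$ is a subsolution of a uniformly elliptic equation with a bounded boundary potential, which is the classical setting for local boundedness near a flat boundary.

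\emph{Step 2: Caccioppoli estimate and iteration.} For $\beta\ge1$, $1\le r'<r\le 2$ and a cutoff $\eta\in C_0^\infty(B_r)$ with $\eta\equiv1$ on $B_{r'}$, $|\nabla\eta|\le C/(r-r')$, I would test \eqref{weak solution} with $\varphi=\eta^2\min\{u,L\}^{2\beta-2}u$, let $L\to\infty$ by monotone convergence, use Young's inequality together with Step 1, and write $v=u^{\beta}$ to get
\[
\int_{B_{r}^{+}}|\nabla(\eta v)|^2\,\mathrm{d}x\le C\beta^2\Big[\tfrac1{(r-r')^2}\int_{B_{r}^{+}}v^2\,\mathrm{d}x+\int_{\Gamma_{r}}\eta^2 v^2\,\mathrm{d}x^\prime\Big].
\]
The trace-interpolation inequality $\int_{\Gamma_r}w^2\,\mathrm{d}x^\prime\le\varepsilon\int_{B_r^{+}}|\nabla w|^2\,\mathrm{d}x+C_\varepsilon\int_{B_r^{+}}w^2\,\mathrm{d}x$, applied to $w=\eta v$, absorbs the boundary term into the left side and leaves a purely interior Caccioppoli inequality. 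Combining it with the Sobolev inequality in $B_r^{+}$ and the trace inequality \eqref{classical trace embedding} produces a reverse-Hölder estimate bounding $\|u\|_{L^{2^\ast\beta}(B_{r'}^{+})}+\|u\|_{L^{2_\ast\beta}(\Gamma_{r'})}$ by $(C\beta)^{1/\beta}\big(\|u\|_{L^{2\beta}(B_{r}^{+})}+\|u\|_{L^{2\beta}(\Gamma_{r})}\big)$, the small mismatch between $2^\ast$ and $2_\ast$ being handled by Hölder on the bounded sets $B_r^{+},\Gamma_r$. Iterating along $r_k=1+2^{-k}$ and $\beta_k=\chi^{k}$ with $\chi=\min\{2^\ast,2_\ast\}/2>1$ — and running the first step from $L^{\theta_0}$ rather than $L^2$ at the top, for a fixed $\theta_0>3$, which is permitted by the same scheme — yields \eqref{H inequality} with $C=C(N,M)$.

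\emph{Step 3: decay.} By Lemma~\ref{Linfty} if $q<2_\ast$ and by Lemma~\ref{Linfty 2} if $q=2_\ast$, $u\in L^\infty(\mathds{R}^N_+)\cap L^\infty(\mathds{R}^{N-1})$; put $M:=\max\{\|u\|_{\infty,\mathds{R}^N_+},\|u\|_{\infty,\mathds{R}^{N-1}}\}$. Then the hypotheses of the lemma hold on $B_{3\delta}^{+}(y')$ for every $y'\in\mathds{R}^{N-1}$ with this fixed $M$, so the translate of \eqref{H inequality} gives
\[
u(y')\le\max_{\Gamma_1(y')}u\le C\Big(\|u\|_{\theta_0,B_2^{+}(y')}^{\theta_0}+\|u\|_{\theta_0,\Gamma_2(y')}^{\theta_0}\Big)^{1/\theta_0}.
\]
Since $u\in L^p(\mathds{R}^N_+)$ (as $u\in E_p(\mathds{R}^N_+)$) and $u\in L^q(\mathds{R}^{N-1})$ (Lemma~\ref{embedding}), and since $B_2^{+}(y')$ and $\Gamma_2(y')$ have a fixed finite measure, Hölder's inequality (interpolating with $L^\infty$ when $\theta_0$ exceeds $p$ or $q$) bounds $\int_{B_2^{+}(y')}u^{\theta_0}$ and $\int_{\Gamma_2(y')}u^{\theta_0}$ by fixed powers of the tails $\int_{B_2^{+}(y')}u^{p}$ and $\int_{\Gamma_2(y')}u^{q}$, which tend to $0$ as $|y'|\to\infty$ by absolute continuity of the integral. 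Hence $u(y')\to0$.

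\emph{Main obstacle.} The real work is Step 2 — carrying the Moser iteration up to the flat boundary with the nonlinear Neumann term present. It goes through precisely because the a priori bound $u\le M$ turns $b\,u^{q-1}$ into a bounded boundary coefficient and because the trace-interpolation inequality lets that contribution be absorbed into the Dirichlet energy; one must also check that all constants depend only on $N$ and $M$, the dependence on $\rho$ entering only through the fixed ellipticity bounds $\lambda,\Lambda$ on $[0,6]$. The remaining ingredients (choice of test function, Young's inequality, the Sobolev and trace embeddings, the geometric iteration, and the final translation/Hölder argument) are routine.
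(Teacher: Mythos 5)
Your proof is correct and follows the same overall Moser-iteration-up-to-the-boundary strategy as the paper, but it differs in one genuinely substantive technical choice: how the boundary Neumann contribution is closed. After the Caccioppoli step, the paper does \emph{not} absorb the boundary term $\int_{\Gamma_\delta}(\eta v)^2\,\mathrm{d}x'$; instead it adds it to both sides and invokes a Friedrichs-type inequality (it cites \cite[Lemma 3.1]{CAOM}) so that $\|\nabla v\|_{2,B_{r}^+}^2+\|v\|_{2,\Gamma_{r}}^2$ becomes the reference Dirichlet quantity, and the object actually iterated is the mixed interior-plus-boundary functional $\psi(t,r)=\bigl(\int_{B_r^+}|u|^t\,\mathrm{d}x+\int_{\Gamma_r}|u|^t\,\mathrm{d}x'\bigr)^{1/t}$. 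You instead absorb the boundary term into the Dirichlet energy via the trace-interpolation inequality $\int_{\Gamma_r}w^2\leq\varepsilon\int_{B_r^+}|\nabla w|^2+C_\varepsilon\int_{B_r^+}w^2$, which yields a purely interior Caccioppoli estimate and hence a purely interior iteration, with the boundary sup-norm falling out as a byproduct of the trace embedding at each stage. Both routes are valid. What your route buys is a formally stronger version of \eqref{H inequality} in which the right-hand side needs only the interior $L^{\theta_0}(B_2^+)$ norm (your displayed reverse-H\"older keeps $\|u\|_{L^{2\beta}(\Gamma_r)}$ on the right, but that term is superfluous once you have absorbed, and including it is harmless). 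What it costs is a worsening of the $\beta$-dependence: since absorption forces $\varepsilon\lesssim\beta^{-2}$, the constant $C_\varepsilon\sim\varepsilon^{-1}\sim\beta^{2}$ bumps the Caccioppoli prefactor from $\beta^2$ to roughly $\beta^4$, so your iteration constant is $(C\beta^{k})^{1/\beta}$ with $k\geq 2$ rather than $(C\beta)^{1/\beta}$ as written; this is still summable and does not affect convergence, but you should not write $(C\beta)^{1/\beta}$ literally. The remaining pieces — discarding the $a\leq0$ interior term, converting $b\,u^{q-1}$ into a bounded boundary coefficient via $u\leq M$, the geometric choice $r_k=1+2^{-k}$, $\beta_k=\chi^k$ with $\chi=2_\ast/2$, and the decay \eqref{decay} by translating the estimate to $B_1^+(y')$ and using $u\in L^p(\mathds{R}^N_+)\cap L^q(\mathds{R}^{N-1})$ together with the global $L^\infty$ bound from Lemmas~\ref{Linfty}--\ref{Linfty 2} and absolute continuity of the integral — match the paper in spirit (the paper instead remarks directly that $u\in L^{\theta_0}(\mathds{R}^N_+)\cap L^{\theta_0}(\mathds{R}^{N-1})$, avoiding the interpolation with $L^\infty$).
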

    
\begin{proof} Let $\beta>2$, $\eta\in C^1(B_{3\delta})$ such that $\mathrm{supp} (\eta)\subset B_\delta^+$ with $0\leq \eta\leq1$. Then, testing with $\varphi=\eta^2u^\beta$ in the weak formulation \eqref{weak solution}, we have
\begin{align*}
    \int_{B_\delta^+}\rho(x_N)\beta \eta^2u^{\beta-1}|\nabla u|^2\mathrm{d}x+2\int_{B_\delta^+}\rho(x_N)\eta u^\beta \nabla u\nabla \eta\mathrm{d}x=&a\int_{B_\delta^+}u^{p-1+\beta}\eta^2\mathrm{d}x\\
    +&b\int_{\Gamma_\delta}u^{q-2}u^{\beta+1}\eta^2\mathrm{d}x^\prime.
\end{align*}
Given that $1\leq \rho \in L^\infty_\mathrm{loc}$, $a\leq 0$ and $u\leq M$, we obtain
\begin{equation*}
    \int_{B_\delta^+}\eta^2u^{\beta-1}|\nabla u|^2\mathrm{d}x\leq \frac{2}{\beta}\int_{B_\delta^+}\eta u^\beta |\nabla u||\nabla \eta|\mathrm{d}x+\frac{bM^{q-2}}{\beta}\int_{\Gamma_\delta}\eta^2 u^{\beta+1}\mathrm{d}x^\prime.
\end{equation*}
Now, consider the Young inequality $cd\leq 2^{-1}( c^2+ d^2)$ for all $c,d>0$. Choosing $c=\eta u^{(\beta-1)/2}|\nabla u|$ and $d=u^{(\beta+1)/2}|\nabla \eta|$, we obtain
\begin{equation*}
\left(1-\frac{1}{\beta}\right)\int_{B_\delta^+}\eta^2u^{\beta-1}|\nabla u|^2\mathrm{d}x\leq  \frac{C}{\beta}\left(\int_{B_\delta^+}u^{\beta+1}|\nabla \eta|^2\mathrm{d}x+\int_{\Gamma_\delta}\eta^2u^{\beta+1}\mathrm{d}x^\prime\right).
\end{equation*}
Observe that $u^{\beta-1}|\nabla u|^2=|2(\beta+1)^{-1}\nabla (u^{\frac{\beta+1}{2}})|^2$ and take $v=u^s$, with $s=(\beta+1)/2$. Thus, since $\beta>2$,
\begin{equation*}
    \frac{1}{s^2}\int_{B_\delta^+}|\eta \nabla v|^2\mathrm{d}x\leq C\left(\int_{B_\delta^+}|v\nabla\eta|^2\mathrm{d}x+\int_{\Gamma_\delta}(\eta v)^2\mathrm{d}x^\prime\right).
\end{equation*}
Summing the term $\int_{\Gamma_\delta}(\eta v)^2\mathrm{d}x$, we obtain
\begin{equation*}
    \left(\|\eta\nabla v\|_{2,B_\delta^+}^2+\|\eta v\|_{2,\Gamma_\delta}^2\right)^{1/2}\leq sC\left(\|v\nabla \eta\|_{2,B_\delta^+}^2+\|\eta v\|_{2,\Gamma_\delta}^2\right)^{1/2}.
\end{equation*}
Let $r_1,r_2$ satisfy the condition $1\leq r_2<\delta\leq r_1\leq 2$. Taking $\eta \equiv 1$ in $B_{r_2}$ and $\eta \equiv 0$ in $\mathds{R}_{+}^{N}\setminus B_{r_1}$, with $| \nabla \eta |\leq 2/(r_1-r_2)$, we achieve
\begin{equation}\label{ineq 2sc}
			\left( \|\nabla v\|_{2,B_{r_2}^+}^2+\|  v \|_{2,\Gamma_{r_2}}^2 \right)^{1/2}\leq \frac{2sC}{r_1-r_2}\left( \| v \|_{2,B_{r_1}^+}^2 + \| v \|_{2,\Gamma_{r_1}}^2\right)^{1/2}.
		\end{equation}
By using Sobolev embedding, Sobolev trace embedding and Friedrichs inequality (see \cite[Lemma 3.1]{CAOM}), we obtain
\begin{equation*}
			\| v \|_{2_\ast,B_{r_2}^+}+\| v \|_{2_\ast,\Gamma_{r_2}}\leq C\left(\| \nabla v \|_{2,B_{r_2}^+}^{2}+ \| v \|_{2,\Gamma_{r_2}}^{2}\right)^{1/2}.
		\end{equation*}
Given that $v=u^s$, applying the inequality above in \eqref{ineq 2sc} we have
\begin{equation}\label{key inequality}
			\left( \int_{B_{r_2}^+} u ^{2_\ast s}\,\mathrm{d}x+\int_{\Gamma_{r_2}}u^{2_\ast s}\,\mathrm{d}x^\prime \right)^{1/2_\ast}\leq \frac{2sC}{r_1-r_2}\left( \int_{B_{r_1}^+}u ^{2s}\,\mathrm{d}x+\int_{\Gamma_{r_1}} u ^{2s}\,\mathrm{d}x^\prime \right)^{1/2}.
		\end{equation}
Now consider the function $\psi: (0,+\infty) \times (0,+\infty)\longrightarrow \mathds{R}$ given by
		\[
		\psi(t,r)=\left( \int_{B_{r}^+}\vert u \vert^{t}\,\mathrm{d}x+\int_{\Gamma_{r}}\vert u\vert^{t}\,\mathrm{d}x^\prime \right)^{1/t}.
		\]
Setting $\theta=2s$ and $\alpha=2_\ast/2$ in \eqref{key inequality}, we have
		\begin{equation}\label{psi inequality}
			\psi(\alpha\theta,r_2)\leq \left(\frac{\theta C}{r_1-r_2}\right)^{2/\theta}\psi(\theta,r_1),\quad \text{for $\theta>3$}.
		\end{equation}
For some $\theta_0>3$ define $\theta_n=\alpha^n\theta_0$ and $r_n=1+2^{-n}$ for $n=1,2,\cdots$. Consequently, from \eqref{psi inequality}, we obtain
\begin{equation}\label{theta induction 1}
			\psi(\theta_{n+1},r_{n+1})\leq \left(\frac{C\alpha^n\theta_0}{r_n-r_{n+1}}\right)^{2/\alpha^n\theta_0}\psi(\theta_{n},r_{n}).
		\end{equation}
By iterating this inequality, we derive the following estimate:     
        
\begin{equation}\label{psi n}
			\begin{alignedat}{2}
				\psi(\theta_{n+1},r_{n+1})&\leq \left(\frac{C\alpha^{n}\theta_0}{r_{n}-r_{n+1}}\right)^{2/(\alpha^{n}\theta_0)}\psi(\theta_n,r_{n})\\
				&\leq \left( C(2\alpha)^{n}\right)^{2\alpha^{-n}/\theta_0}\psi(\theta_n,r_{n})\\
				&=\left(C^\frac{2}{\theta_0}\right)^{\alpha^{-n}}\left( (2\alpha)^\frac{2}{\theta_0} \right)^{n\alpha^{-n}}\psi(\theta_n,r_{n})\\
				&\leq \left(C^\frac{p}{\theta_0}\right)^{\sum_{i=1}^n \alpha^{-i}}\left( (2\alpha)^\frac{2}{\theta_0} \right)^{\sum_{i=1}^n i\alpha^{-i}}\psi(\theta_0,2).
			\end{alignedat}
		\end{equation}
Since $\alpha=2_\ast/2>1$, both series $\sum_{i=1}^\infty\alpha^{-i}$ and $\sum_{i=1}^\infty i\alpha^{-i}$ converge. Therefore, taking the limit as $n\longrightarrow \infty$ in \eqref{psi n}, we obtain
\[
		\max_{B_1^+}u+\max_{\Gamma_1}u=\psi(+\infty,1)\leq C\psi(\theta_0,2)=\left( \int_{B_{2}^+}\vert u \vert^{\theta_0}\,\mathrm{d}x+\int_{\Gamma_{2}}\vert u\vert^{\theta_0}\,\mathrm{d}x^\prime \right)^{1/\theta_0},
		\]
which gives \eqref{H inequality}. For the proof of \eqref{decay}, we observe that by Lemmas \ref{Linfty} and \ref{Linfty 2}, $u \in L^{\theta_0}(\mathds{R}^N_+)\cap L^{\theta_0}(\mathds{R}^{N-1})$. Then, given $\varepsilon>0$, choosing $|x^\prime|$ large enough, by \eqref{H inequality} we obtain $\max_{\Gamma_1(x^\prime)}u<\varepsilon$, which concludes the proof.
\end{proof}	

Now we are ready to prove Theorem \ref{H2_loc regularity theorem}.

\begin{proof}[Proof of Theorem \ref{H2_loc regularity theorem}]: Let $u$ be a nonnegative weak solution of \eqref{PG}. Then, by a change of variables, we obtain
\begin{equation*}
      \int_{\mathds{R}^N_+}\rho(x_N)\nabla D_h^i(u)\nabla \varphi\mathrm{d}x=a\int_{\mathds{R}^N_+}D_h^i(u^{p-1})\varphi\mathrm{d}x+b\int_{\mathds{R}^{N-1}}D_h^i(u^{q-1})\varphi\mathrm{d}x^\prime, \forall \varphi \in C^\infty_\delta(\mathds{R}^N_+)
\end{equation*}
with $i=1,\cdots, N-1$. In each case of the theorem, by density, we are allowed to take $\varphi=D^i_h (u)$, and therefore
\begin{equation}\label{Dhi test}
    \int_{\mathds{R}^N_+}\rho(x_N)|\nabla D_h^i(u)|^2\mathrm{d}x=a\int_{\mathds{R}^N_+}D_h^i(u^{p-1})D_h^i(u)\mathrm{d}x+b\int_{\mathds{R}^{N-1}}D_h^i(u^{q-1})D_h^i(u)\mathrm{d}x^\prime.
\end{equation}

Let us first consider case $(i)$, that is, $\gamma\geq2$, $a>0$, $b\geq$, $p\in [2,2^\ast]$ and $q\in (1,\infty)$. By Lemmas \ref{Linfty} and \ref{Linfty 2}, we know that $u\in L^\infty(\mathds{R}^N_+)$, thus, we have $u^{p-1} \in \mathcal{D}^{1,2}_\rho(\mathds{R}^N_+)$. Then, applying Lemma \ref{limit dq}, we obtain
\[
a\int_{\mathds{R}^N_+}D_h^i(u^{p-1})D_h^i(u)\mathrm{d}x \longrightarrow a(p-1) \int_{\mathds{R}^N_+}u^{p-2}\left(\frac{\partial u}{\partial x_i}\right)^2\mathrm{d}x, \quad  h \longrightarrow 0.
\]
Moreover, since the remaining term on the right-hand side of \eqref{Dhi test} remains nonpositive when $b(q-1)\leq 0$, for $h$ sufficient small we have
\begin{equation*}
    \int_{\mathds{R}^N_+}|\nabla D_h^i(u)|^2 \mathrm{d}x\leq\int_{\mathds{R}^N_+}\rho(x_N)|\nabla D_h^i(u)|^2 \mathrm{d}x\leq a(p-1) \int_{\mathds{R}^N_+}u^{p-2}\left(\frac{\partial u}{\partial x_i}\right)^2\mathrm{d}x<\infty,
\end{equation*}
then, applying \cite[Proposition 9.3]{Brezis} for the half-space, we obtain $\frac{\partial u}{\partial x_i} \in H^1(\mathds{R}^N_+)$ for $i=1,\cdots,N-1$, and thus $\frac{\partial^2u}{\partial x_j\partial x_i} \in L^2(\mathds{R}^N_+)$ for $i=1,\cdots, N-1$ and $j=1,\cdots, N$. Given that the equation
\begin{equation*}
    -\mathrm{div}(\rho(x_N)\nabla u)=au^{p-1}~~\text{in}~~ \mathds{R}^N_+
\end{equation*}
holds in distributional sense, it follows that
\begin{equation*}
   - \frac{\partial}{\partial x_N}\left(\rho(x_N)\frac{\partial u}{\partial x_N}\right)= \sum_{i=1}^{N-1}\rho(x_N)\frac{\partial^2u}{\partial x_i^2}+au^{p-1}.
\end{equation*}
Rearranging the terms, we obtain
\begin{equation*}
    -\rho(x_N)\frac{\partial^2u}{\partial x_N^2}= \sum_{i=1}^{N-1}\rho(x_N)\frac{\partial^2u}{\partial x_i^2}+au^{p-1}+\rho'(x_N)\frac{\partial u}{\partial x_N}.
\end{equation*}
Since all terms on the right-hand side belong to $L^2_{\mathrm{loc}}(\mathds{R}^N_+)$, it follows that $\rho(x_N)\frac{\partial^2u}{\partial x_N^2}$, and therefore $\frac{\partial^2u}{\partial x_N^2}$, belongs to $L^2_{\mathrm{loc}}(\mathds{R}^N_+)$. This completes the proof of case $(i)$. Observe that the same reasoning applies when $\gamma\geq 0$ and $p=2^\ast$.


Now, turning to item $(ii)$, since $a\leq 0$, by \eqref{Dhi test} we have
\begin{equation*}
\int_{\mathds{R}^N_+}\rho(x_N)|\nabla D_h^i(u)|^2\mathrm{d}x\leq b\int_{\mathds{R}^{N-1}}D_h^i(u^{q-1})D_h^i(u)\mathrm{d}x^\prime.
\end{equation*}
Consider the following inequality, valid for $s,t>0$:
\begin{equation*}
    |s^{q-1}-t^{q-1}|\leq (q-1)(\lambda s+(1-\lambda)t)^{q-2}|s-t|,
\end{equation*}
for some $\lambda\in (0,1).$ Thus, we deduce
\begin{align}\label{key Nabla D}
\nonumber    \int_{\mathds{R}^N_+}\rho(x_N)|\nabla D_h^i(u)|^2\mathrm{d}x\leq& b\int_{\mathds{R}^{N-1}}\frac{u^{q-1}(x+he_i)-u^{q-1}(x)}{h}D_h^i(u)\mathrm{d}x^\prime\\ 
    \leq& b(q-1)\int_{\mathds{R}^{N-1}}(\lambda u(x+he_i)+(1-\lambda)u)^{q-2}|D_h^i(u)|^2\mathrm{d}x^\prime.
\end{align}
By Lemmas \ref{Linfty} and \ref{Linfty 2}, we know that $u\in L^\infty(\mathds{R}^N_+)\cap L^\infty(\mathds{R}^{N-1})$ and due to Lemma \ref{Harnack inequality}, we can choose $R>0$ such that
\begin{equation}\label{L infty ineq}
    \|u\|_{\infty, \mathds{R}^{N-1\backslash}\Gamma_R}^{q-2}<\frac{\gamma-1}{b2^q(q-1)}.
\end{equation}
Furthermore,
\begin{align*}
    \int_{\mathds{R}^{N-1}}(\lambda u(x+he_i)+(1-\lambda)u(x))^{q-2}|D_h^i(u)|^2\,\mathrm{d}x^\prime
    \leq\ & 
       2^{q-2} \|u\|^{q-2}_{\infty,\Gamma_R} \int_{\Gamma_R} |D_h^i(u)|^2\,\mathrm{d}x^\prime\\
        +&2^{q-2} \|u\|^{q-2}_{\infty, \mathds{R}^{N-1} \setminus \Gamma_R}\int_{\mathds{R}^{N-1}} |D_h^i(u)|^2\,\mathrm{d}x^\prime.
\end{align*}
Now, observe that due to inequality \eqref{Hardy} and \eqref{L infty ineq} we have
\begin{align*}
    \int_{\mathds{R}^{N-1}}(\lambda u(x+he_i)+(1-\lambda)u(x))^{q-2}|D_h^i(u)|^2\,\mathrm{d}x^\prime
    \leq & 2^{q-2}\|u\|^{q-2}_{\infty,\Gamma_R} \int_{\Gamma_R} |D_h^i(u)|^2\,\mathrm{d}x^\prime\\
        +& 2^{q-2}\|u\|^{q-2}_{\infty,\mathds{R}^{N-1} \setminus \Gamma_R} \left(\frac{2}{\gamma-1}\right)\int_{\mathds{R}^{N}_+}\rho(x_N) |\nabla D_h^i(u)|^2\,\mathrm{d}x\\
        \leq &  
        2^{q-2}\|u\|^{q-2}_{\infty,\Gamma_R} \int_{\Gamma_R} |D_h^i(u)|^2\,\mathrm{d}x^\prime\\
        +& \frac{1}{2b(q-1)}\int_{\mathds{R}^{N}_+}\rho(x_N) |\nabla D_h^i(u)|^2\,\mathrm{d}x.
\end{align*}
Applying this in \eqref{key Nabla D}, we obtain
\begin{equation*}
    \int_{\mathds{R}^N_+}\rho(x_N)|\nabla D_h^i(u)|^2\mathrm{d}x \leq 2^{q-1}\|u\|^{q-2}_{\infty,\mathds{R}^{N-1}}\int_{\Gamma_R}|D_h^i(u)|^2\mathrm{d}x^\prime.
\end{equation*}
By Lemma \ref{Lib}, $u \in C^{1,\alpha}_{\mathrm{loc}}(\overline{\mathds{R}^N_+})$ and we obtain 
\begin{equation*}
\int_{\mathds{R}^N_+}|\nabla D_h^i(u)|^2\mathrm{d}x\leq \int_{\mathds{R}^N_+}\rho(x_N)|\nabla D_h^i(u)|^2\mathrm{d}x\leq C.
\end{equation*}
Arguing as in the first case, we conclude that $\frac{\partial u}{\partial x_i} \in H^1(\mathds{R}^N_+)$ for $i=1,\cdots N-1$ and $\frac{\partial^2u}{\partial x_N^2}\in L^2_{\mathrm{loc}}(\overline{\mathds{R}^N_+})$, which concludes the result.
\end{proof}


\section{Proof of existence results}

\subsection{Proof of Theorems \ref{Problem1-Existence}} We begin by noting that, using standard arguments and Lemma \ref{embedding}, the functional $I$ has the Mountain Pass geometry.

 \begin{lemma}\label{mp geometry} Assume $(\rho_0)$ with $\gamma>1$, $a\leq0$ and $b>0$. If $1<p$ and $\max\{2,p\}<q<2_\ast$, the functional $I:E_p(\mathds{R}^N_+)\longrightarrow \mathds{R}$ has the mountain pass geometry, that is, there exists a constants $C_0>0$ and $r_0>0$ sufficiently small such that 
 \begin{enumerate}
     \item [$(i)$] $I(u)\geq C_0>$ if $\|u\|_{E_p(\mathds{R}^N_+)}=r_0$;
      \item [$(ii)$] For each $u\not\equiv0$ it holds $\displaystyle\lim_{t\longrightarrow+\infty} I(tu)=-\infty$.
 \end{enumerate}
   \end{lemma}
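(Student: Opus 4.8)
The plan is to read both mountain‑pass conditions off the explicit form of the functional. Since $a\le 0$, we have $-\tfrac ap\int_{\mathds{R}^N_+}|u|^p=\tfrac{|a|}{p}\|u\|_{p,\mathds{R}^N_+}^p\ge 0$, so
\[
I(u)=\frac12\|u\|^2+\frac{|a|}{p}\|u\|_{p,\mathds{R}^N_+}^p-\frac bq\|u\|_{q,\mathds{R}^{N-1}}^q,
\]
and the only term with a sign working against us is the boundary term, with $b>0$. Note that $E_p(\mathds{R}^N_+)\hookrightarrow\mathcal{D}^{1,2}_\rho(\mathds{R}^N_+)$ continuously (a sequence that is $\|\cdot\|_{E_p}$‑Cauchy is $\|\cdot\|$‑Cauchy), and, since $\gamma>1$ and $2\le q\le 2_\ast$, Lemma~\ref{embedding} provides $C>0$ with $\|v\|_{q,\mathds{R}^{N-1}}\le C\|v\|$ for every $v\in E_p(\mathds{R}^N_+)$.

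\emph{Geometry near the origin.} Fix $u$ with $\|u\|_{E_p}=\|u\|+\|u\|_{p,\mathds{R}^N_+}=r_0$. Combining the display above with the trace bound gives $I(u)\ge \tfrac12\|u\|^2+\tfrac{|a|}{p}\|u\|_{p,\mathds{R}^N_+}^p-\tfrac{bC^q}{q}\|u\|^q$. By pigeonhole, either $\|u\|\ge r_0/2$ or $\|u\|_{p,\mathds{R}^N_+}\ge r_0/2$. In the first case, using $q>2$, $I(u)\ge\|u\|^2\bigl(\tfrac12-\tfrac{bC^q}{q}\|u\|^{q-2}\bigr)\ge\tfrac{r_0^2}{4}\bigl(\tfrac12-\tfrac{bC^q}{q}r_0^{q-2}\bigr)\ge\tfrac{r_0^2}{16}$ once $r_0$ is small. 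In the second case, if $a<0$ then, using $q>p$ so that $r_0^q=o(r_0^p)$ as $r_0\to0$, $I(u)\ge\tfrac{|a|}{p}(r_0/2)^p-\tfrac{bC^q}{q}r_0^q\ge\tfrac{|a|}{p\,2^{p+1}}r_0^p>0$ for $r_0$ small; if $a=0$ the $L^p$‑term is absent from $I$, $p$ plays no role, and one argues directly on the sphere $\|u\|=r_0$ of $\mathcal{D}^{1,2}_\rho(\mathds{R}^N_+)$, where the first‑case estimate already gives $I(u)\ge r_0^2/4>0$. Taking $r_0$ small enough for all of the above and letting $C_0$ be the minimum of the resulting positive bounds proves $(i)$.

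\emph{Behaviour at infinity.} Pick $u\in E_p(\mathds{R}^N_+)$, $u\not\equiv0$, with $\|u\|_{q,\mathds{R}^{N-1}}>0$ — for instance any nontrivial $u\in C^\infty_\delta(\mathds{R}^N_+)$ not identically zero on $\mathds{R}^{N-1}$, which is all the Mountain Pass Theorem will require. Then
\[
I(tu)=\frac{t^2}{2}\|u\|^2+\frac{|a|}{p}t^p\|u\|_{p,\mathds{R}^N_+}^p-\frac bq t^q\|u\|_{q,\mathds{R}^{N-1}}^q,\qquad t>0,
\]
and since $q>\max\{2,p\}$ while the coefficient $-\tfrac bq\|u\|_{q,\mathds{R}^{N-1}}^q$ is strictly negative, the $t^q$‑term dominates the $t^2$‑ and $t^p$‑terms as $t\to+\infty$, hence $I(tu)\to-\infty$; fixing $t_0$ large with $I(t_0u)<0$ and $\|t_0u\|_{E_p}>r_0$ yields $(ii)$.

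The only delicate point is $(i)$: the norm $\|\cdot\|_{E_p}$ is a sum $\|\cdot\|+\|\cdot\|_{p,\mathds{R}^N_+}$ of two pieces of different homogeneity, so on a small sphere one must separately rule out that the boundary term overwhelms whichever piece is dominant. The strict inequalities $q>p\ge2$ are exactly what make the boundary term higher order in both regimes, and the trace embedding of Lemma~\ref{embedding} (available precisely because $\gamma>1$ and $q\le 2_\ast$) is what converts that term into a power of $\|u\|$.
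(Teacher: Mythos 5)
Your proof is correct and is more careful than the paper's. The paper's argument for $(i)$ consists of only the chain $I(u)\geq\tfrac12\|u\|^2-C\|u\|^q$ followed by the assertion that this is $\geq C_0>0$ on the $E_p$-sphere $\|u\|_{E_p}=r_0$. That jump is not justified: $\|\cdot\|_{E_p}=\|\cdot\|+\|\cdot\|_{p,\mathds{R}^N_+}$ is a sum of pieces of different homogeneity, and unless $\gamma\geq 2$ and $p\geq 2$ (so that $L^p$ is controlled by the gradient norm via Lemma~\ref{embedding}), nothing prevents $\|u\|$ from being arbitrarily close to $0$ while $\|u\|_{p,\mathds{R}^N_+}$ carries all of $r_0$, in which case $\tfrac12\|u\|^2-C\|u\|^q$ is arbitrarily small. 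Your pigeonhole split --- either $\|u\|\geq r_0/2$ or $\|u\|_{p,\mathds{R}^N_+}\geq r_0/2$ --- is exactly the missing step: the first branch runs the paper's estimate, the second branch uses the coercive $\tfrac{|a|}{p}\|u\|_p^p$ term together with $q>p$. You also correctly flagged the two tacit hypotheses that the paper leaves implicit but both proofs actually use: for $a=0$ the second branch degenerates (so one must fall back to the $\mathcal{D}^{1,2}_\rho$-sphere, since $I$ does not see the $L^p$ norm at all), and in $(ii)$ one needs $\|u\|_{q,\mathds{R}^{N-1}}>0$ rather than merely $u\not\equiv0$, since a nonzero $u$ vanishing on $\mathds{R}^{N-1}$ gives $I(tu)\to+\infty$. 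Your proof proposal thus follows the same overall strategy as the paper but supplies the case analysis that the paper's one-line conclusion glosses over.
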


 \begin{proof} By Lemma \ref{embedding}, we obtain

\begin{align*}
    I(u)= \frac{1}{2}\|u\|^2 -\frac{b}{q}\int_{\mathds{R}^{N-1}}|u|^q\mathrm{d}x^\prime+\frac{|a|}{p}\int_{\mathds{R}^N_+}|u|^p\mathrm{d}x
    \geq \frac{1}{2}\|u\|^2 -C\|u\| ^q.
\end{align*}
 Since $2<q$, we have $I(u)\geq C_0>0$ for $\|u\|_{E_p(\mathds{R}^N_+)} $ sufficiently small. 
 
 Now, let $u \in E_p(\mathds{R}^N_+)\backslash\{0\}$ and $t>0$. Suppose first that $p\leq 2$. Since $q>\max\{2,p\}$, we see that 
 \begin{equation*}
     I(tu)=t^2\left(\frac{1}{2}\|u\|^2 -\frac{bt^{q-2}}{q}\int_{\mathds{R}^{N-1}}|u|^q\mathrm{d}x^\prime+ \frac{|a| t^{p-2}}{p} \int_{\mathds{R}^N_+} |u|^p  \mathrm{d}x \right)\longrightarrow -\infty\quad\mbox{as}\quad t\longrightarrow \infty.
 \end{equation*}
Similarly, we obtain the same for $p<2$.
\end{proof}

\begin{remark}\label{Nonnegative ps sequence}
By Lemma \ref{mp geometry}, the minimax level is well defined:
\begin{equation*}
   0< c= \inf_{g \in \Lambda } \max_{t \in [0,1]}I(g(t)),
\end{equation*}
where 
\begin{equation*}
\Lambda=\{g \in C([0,1], E_p(\mathds{R}^N_+)): g(0)=0~~\text{and}~~ I(g(1))<0\}.
\end{equation*} 
Now, observe that since $I(u)=I(|u|)$ for all $u \in \mathcal{D}^{1,2}_\rho(\mathds{R}^N_+)$, it follows that 
\begin{equation*}
    c = \inf_{g\in\Lambda^+}\max_{t\in[0,1]}I(g(t)),
\end{equation*}
where $\Lambda^+=\{g \in C([0,1], E_p^+(\mathds{R}^N_+)): g(0)=0~~\text{and}~~ I(g(1))<0\}$ and $E_p^+(\mathds{R}^N_+):= \{u \in E_p(\mathds{R}^N_+): u\geq 0\}$. In this case, we can obtain a Palais–Smale sequence $(u_n)$ with $u_n\geq 0$. Indeed, given $\varepsilon>0$, by applying the Ekeland variational principle to the functional $\Phi:\Lambda^+\longrightarrow \mathds{R}$, defined by $\Phi(g)=\max_{t \in [0,1]}I(g(t))$, we obtain $h \in \Lambda^+$ such that
\begin{equation*}
    \Phi(g)>\Phi(h)-\varepsilon^{1/2}d(h,g),~~\forall g\in \Lambda^+\setminus\{h\}, 
\end{equation*}
where $d(\cdot,\cdot)$ is a metric on the space $\Lambda$. Given that $\Phi(g)=\Phi(|g|)$ and $d(h,g)\geq d(h,|g|)$ for all $g\in \Lambda$, we also have
\begin{equation*}
    \Phi(g)>\Phi(h)-\varepsilon^{1/2}d(h,g),~~\forall g\in \Lambda\setminus\{h\}.
\end{equation*}
Now, proceeding as in \cite[Theorem 4.3]{mawhin-willem}, we find $v\geq 0$ such that
\begin{equation*}
    c-\varepsilon<I(v)<c+\varepsilon\quad\text{and}\quad \|I'(v)\|\leq \varepsilon^{1/2}.
\end{equation*}
Therefore, we can construct a sequence $(u_n)\subset E_p(\mathds{R}^N_+)$ satisfying
\begin{equation}\label{nonnegative Ps seq}
u_n\geq 0, \quad    I(u_n)\longrightarrow c\quad\text{and}\quad I^\prime(u_n)\longrightarrow 0.    
\end{equation}
\end{remark}

\begin{lemma}\label{limitacao inferior} The sequence $(u_n)$ is bounded in $E_p(\mathds{R}^N_+)$ and there exists $C_0>0$ such that
\begin{equation}\label{desigualdade rho}
0<C_0\leq \int_{\mathds{R}^{N-1}}|u_n|^q\mathrm{d}x^\prime,  
\end{equation}
for $n$ sufficiently large.
\end{lemma}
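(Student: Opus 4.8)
The plan is the standard boundedness argument for mountain--pass sequences, tuned so as to exploit that $q$ is strictly larger than both $2$ and $p$. Recall from \eqref{nonnegative Ps seq} that $u_n\ge 0$, $I(u_n)\to c$ and $I'(u_n)\to 0$ in the dual of $E_p(\mathds{R}^N_+)$, and that $c>0$ by Lemma~\ref{mp geometry} and Remark~\ref{Nonnegative ps sequence}. Since $a\le 0$ and $u_n\ge 0$,
\[
I(u_n)=\frac12\|u_n\|^2-\frac{b}{q}\int_{\mathds{R}^{N-1}}u_n^q\,\mathrm{d}x'+\frac{|a|}{p}\int_{\mathds{R}^N_+}u_n^p\,\mathrm{d}x,
\]
\[
I'(u_n)[u_n]=\|u_n\|^2-b\int_{\mathds{R}^{N-1}}u_n^q\,\mathrm{d}x'+|a|\int_{\mathds{R}^N_+}u_n^p\,\mathrm{d}x,
\]
so that subtracting $q^{-1}$ times the second identity from the first annihilates the boundary integral:
\[
I(u_n)-\frac1q\,I'(u_n)[u_n]=\Big(\frac12-\frac1q\Big)\|u_n\|^2+|a|\Big(\frac1p-\frac1q\Big)\int_{\mathds{R}^N_+}u_n^p\,\mathrm{d}x .
\]
Both coefficients on the right are strictly positive, which is precisely where the hypothesis $\max\{2,p\}<q$ is used.

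For the boundedness I would estimate the left-hand side from above: for $n$ large $I(u_n)\le c+1$, while $|I'(u_n)[u_n]|\le\|I'(u_n)\|\,\|u_n\|_{E_p(\mathds{R}^N_+)}$ with $\|I'(u_n)\|\to 0$. Hence
\[
\Big(\tfrac12-\tfrac1q\Big)\|u_n\|^2+|a|\Big(\tfrac1p-\tfrac1q\Big)\|u_n\|_{p,\mathds{R}^N_+}^p\le c+1+o(1)\,\|u_n\|_{E_p(\mathds{R}^N_+)} .
\]
Because $q>2$, $q>p$ and $p>1$, the left-hand side grows faster than linearly in $\|u_n\|_{E_p(\mathds{R}^N_+)}=\|u_n\|+\|u_n\|_{p,\mathds{R}^N_+}$ whereas the right-hand side is at most linear; absorbing the $o(1)$-term thus yields $\sup_n\|u_n\|_{E_p(\mathds{R}^N_+)}<\infty$. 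When $a=0$ the $L^p(\mathds{R}^N_+)$-integral does not appear in $I$ nor in \eqref{PG}, so one works directly in $\mathcal{D}^{1,2}_\rho(\mathds{R}^N_+)$ and only the estimate for $\|u_n\|$ is needed.

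It remains to prove \eqref{desigualdade rho}. With $(u_n)$ now bounded, $I'(u_n)[u_n]\to 0$, that is,
\[
\|u_n\|^2+|a|\int_{\mathds{R}^N_+}u_n^p\,\mathrm{d}x=b\int_{\mathds{R}^{N-1}}u_n^q\,\mathrm{d}x'+o(1).
\]
Assume, towards a contradiction, that $\int_{\mathds{R}^{N-1}}u_n^q\,\mathrm{d}x'\to 0$ along a subsequence. Then the last identity gives $\|u_n\|\to 0$ and $\int_{\mathds{R}^N_+}u_n^p\,\mathrm{d}x\to 0$, whence $I(u_n)\to 0$; this contradicts $I(u_n)\to c>0$. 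Therefore $\liminf_n\int_{\mathds{R}^{N-1}}u_n^q\,\mathrm{d}x'>0$, i.e.\ there is $C_0>0$ with $\int_{\mathds{R}^{N-1}}|u_n|^q\,\mathrm{d}x'\ge C_0$ for all large $n$.

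Once the signs of the two coefficients in the Nehari-type combination are secured, the argument is routine; the only point demanding some care is recovering boundedness in the full $E_p(\mathds{R}^N_+)$-norm rather than merely in the Dirichlet part $\|\cdot\|$, which is why the case $a=0$ is dispatched separately and why the hypothesis $p>1$ is invoked (to absorb the linear error term when $p<2$).
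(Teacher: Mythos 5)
Your proposal is correct and follows the same overall strategy as the paper — test the Palais--Smale identities $I(u_n)\to c$ and $I'(u_n)[u_n]=o(\|u_n\|_{E_p})$ against a Nehari-type combination — but the details differ in two places. For boundedness, you take the multiplier exactly equal to $q$ so the boundary integral drops out, whereas the paper fixes an intermediate $\mu\in(\max\{2,p\},q)$ and retains a positive multiple of the boundary term; both choices yield $c_1\|u_n\|^2+c_2\|u_n\|_{p,\mathds{R}^N_+}^p\le c+1+\|u_n\|_{E_p(\mathds{R}^N_+)}$ and hence boundedness (via Young's inequality, implicit in both write-ups). For the lower bound \eqref{desigualdade rho}, you argue by contradiction from $I'(u_n)[u_n]\to 0$ and $I(u_n)\to c>0$; the paper instead picks $\lambda\in(0,\min\{2,p\})$ so that in $I(u_n)-\lambda^{-1}I'(u_n)[u_n]$ every term except the boundary one acquires a nonpositive coefficient, giving $\tfrac{c}{2}\le b(\tfrac1\lambda-\tfrac1q)\int_{\mathds{R}^{N-1}}|u_n|^q\,\mathrm{d}x'$ directly. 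The two arguments carry identical content; yours is a touch longer but makes the mechanism (collapse to zero energy) more transparent. Your aside about $a=0$ is a genuinely useful clarification the paper glosses over: when $a=0$ the $|a|\|u_n\|_p^p$ term disappears from the Nehari combination and one cannot control $\|u_n\|_{p,\mathds{R}^N_+}$ from the estimate, so the natural working space is $\mathcal{D}^{1,2}_\rho(\mathds{R}^N_+)$ rather than $E_p(\mathds{R}^N_+)$.
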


\begin{proof} Let $\max\{2,p\}<\mu<q$. For $n$ large enough, we have 
\begin{align*}
c+1+\|u_n\|_{E_p(\mathds{R}^N_+)} \geq& I(u_n)-\frac{1}{\mu}I^\prime(u_n)u_n\\
=&\left(\frac{1}{2}-\frac{1}{\mu}\right)\|u_n\|^2+b\left(\frac{1}{\mu}-\frac{1}{q}\right)\int_{\mathds{R}^{N-1}}|u_n|^q\mathrm{d}x^\prime+|a|\left(\frac{1}{p}-\frac{1}{\mu}\right)\int_{\mathds{R}^N_+}|u_n|^p\mathrm{d}x,
\end{align*}
then 
\begin{equation*}
    c+1\geq c_1\|u_n\|^2+c_2\|u\|_{p,\mathds{R}^N_+}^p-\|u\|_{E_p(\mathds{R}^N_+)},
\end{equation*}
for some constants $c_1,c_2>0$, which implies
that $\|u_n\|_{E_p(\mathds{R}^N_+)}$ is bounded. On the other hand, let $0<\lambda<\min\{2,p\}$. Then, for $n$ sufficiently large, it holds that 
$$
0<\frac{c}{2}\leq I(u_n)-\frac{1}{\lambda} I'(u_n)u_n\leq b\left(\frac{1}{2}-\frac{1}{q}\right)\int_{\mathds{R}^{N-1}}|u_n|^q\mathrm{d}x^\prime,
$$
and this concludes the proof.
\end{proof}

\begin{lemma}\label{key lemma}  Given $y\in \mathds{R}^{N-1}$, consider
\begin{equation*}\label{BGamma norm}
    \|u\|_{B^+_1, \Gamma, y}^2:=\|\rho(x_N)^{1/2}\nabla u\|_{2, B_1^+(y)}^2+\|u\|_{2,  \Gamma_1(y)}^2.
\end{equation*}
Then
\begin{equation}\label{BGamma inequality}
    \|u\|_{r, \Gamma_1(y)}\leq C  \|u\|_{B^+_1, \Gamma, y},
\end{equation}
for all $r \in [2,2_\ast]$ and $u  \in E_p(\mathds{R}^N_+)$.
\end{lemma}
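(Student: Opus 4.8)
The plan is to reduce \eqref{BGamma inequality} to a purely unweighted local trace inequality on the fixed half-ball $B_1^+$, and then to interpolate down from the critical trace exponent $2_\ast$.

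First I would exploit translation invariance to assume $y=0$, writing $B_1^+=B_1^+(0)$ and $\Gamma_1=\Gamma_1(0)$. Next, since $\gamma>1\ge 0$, hypothesis $(\rho_0)$ forces $\rho(x_N)\ge(1+x_N)^\gamma\ge 1$ on $\overline{\mathds{R}^N_+}$, so that
\[
\|\nabla u\|_{2,B_1^+}^2+\|u\|_{2,\Gamma_1}^2\le\|\rho(x_N)^{1/2}\nabla u\|_{2,B_1^+}^2+\|u\|_{2,\Gamma_1}^2=\|u\|_{B^+_1,\Gamma,0}^2 .
\]
Hence it suffices to prove
\[
\|u\|_{r,\Gamma_1}\le C\left(\|\nabla u\|_{2,B_1^+}^2+\|u\|_{2,\Gamma_1}^2\right)^{1/2},\qquad r\in[2,2_\ast],
\]
an estimate in which the weight no longer appears.

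For $u\in C^\infty_\delta(\mathds{R}^N_+)$ I would obtain this exactly as in the proof of Lemma \ref{Harnack inequality}: the Friedrichs-type inequality of \cite[Lemma 3.1]{CAOM} controls the full $H^1(B_1^+)$-norm of $u$ by $(\|\nabla u\|_{2,B_1^+}^2+\|u\|_{2,\Gamma_1}^2)^{1/2}$, and the Sobolev trace embedding $H^1(B_1^+)\hookrightarrow L^{2_\ast}(\Gamma_1)$ then yields the case $r=2_\ast$. For a general $r\in[2,2_\ast]$ one applies Hölder's inequality on the finite-measure set $\Gamma_1$, namely $\|u\|_{r,\Gamma_1}\le|\Gamma_1|^{1/r-1/2_\ast}\|u\|_{2_\ast,\Gamma_1}$, and the desired estimate follows for smooth functions.

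Finally I would pass to $u\in E_p(\mathds{R}^N_+)$ by density: choose $(u_n)\subset C^\infty_\delta(\mathds{R}^N_+)$ with $\|u_n-u\|_{E_p(\mathds{R}^N_+)}\to 0$. Then $\nabla u_n$ is Cauchy in $L^2(B_1^+)$ because $\rho\ge1$ there, while the Hardy inequality \eqref{Hardy} together with $\gamma>1$ gives $\|u_n-u_m\|_{2,\mathds{R}^{N-1}}\le C\|u_n-u_m\|\to 0$, so $(u_n)$ is Cauchy in $L^2(\Gamma_1)$ as well; applying the already-established inequality to $u_n-u_m$ shows $(u_n)$ is Cauchy in $L^r(\Gamma_1)$, and the inequality survives the passage to the limit. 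Combined with the first displayed inequality this gives $\|u\|_{r,\Gamma_1(y)}\le C\|u\|_{B^+_1,\Gamma,y}$. The argument is essentially routine; the one point that needs a little care is this last step — that the trace of $u\in E_p(\mathds{R}^N_+)$ on $\Gamma_1(y)$ is well defined and that the boundary integrals converge — which is precisely what the Hardy inequality \eqref{Hardy} (available here thanks to $\gamma>1$) secures.
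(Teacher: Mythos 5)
Your proof is correct and uses essentially the same two ingredients as the paper's: the unweighted Sobolev trace embedding $H^1(B_1^+)\hookrightarrow L^r(\partial B_1^+)$ together with the Friedrichs inequality of \cite[Lemma 3.1]{CAOM}, coupled with the observation that $\rho\ge 1$ lets one discard the weight. The paper applies these in the opposite order (trace embedding first, then Friedrichs to absorb the $\|u\|_{2,B_1^+(y)}$ term) and leaves the density passage and the interpolation for $r<2_\ast$ implicit, but the argument is the same.
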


\begin{proof} By the embedding $H^1(B_1^+(y))\hookrightarrow L^q(\partial B_1^+(y))$, we obtain
\begin{equation*}
    \|u\|_{q, \Gamma_1(y)}\leq \|u\|_{q, \partial B_1^+(y)}\leq C(\|\rho(x_N)^{1/2}\nabla u\|_{2, B_1^+(y)}^2+\|u\|_{2,B_1^+(y)}^2)^{1/2}.
\end{equation*}
Furthermore, by the Friedrichs inequality(see \cite[Lemma 3.1]{CAOM}), it follows that:
\[
\|u\|_{2,B_1^+(y)}\leq C(\|\nabla u\|_{2, B_1^+(y)}^2+\|u\|_{2, \Gamma_1(y)}^2)^{1/2}\leq C\|u\|_{B_1^+,\Gamma,y}.
\]
Then, combining the two inequalities above we obtain \eqref{BGamma inequality}.
\end{proof}

\begin{lemma}\label{sup}Let $(u_n)$ the sequence in \eqref{nonnegative Ps seq}, then there exists $C>0$ such that 
\begin{equation}\label{sup inequality}
    0<C \leq \sup_{y \in \mathds{R}^{N-1}}\int_{ \Gamma_1(y)}u_n^2\mathrm{d}x^\prime.
\end{equation}
\end{lemma}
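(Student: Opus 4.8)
The plan is to argue by contradiction via a Lions-type \emph{vanishing} argument. Suppose \eqref{sup inequality} fails; then, passing to a subsequence, $\delta_n:=\sup_{y\in\mathds{R}^{N-1}}\int_{\Gamma_1(y)}u_n^2\,\mathrm{d}x^\prime\longrightarrow 0$. By Lemma~\ref{limitacao inferior} the sequence $(u_n)$ is bounded in $E_p(\mathds{R}^N_+)$, hence in $\mathcal{D}^{1,2}_\rho(\mathds{R}^N_+)$, and so (recall $\gamma>1$) Lemma~\ref{embedding} yields that $(u_n)$ is bounded in $L^r(\mathds{R}^{N-1})$ for every $r\in[2,2_\ast]$, while $\|\rho(x_N)^{1/2}\nabla u_n\|_{2,\mathds{R}^N_+}=\|u_n\|$ is bounded. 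I would then show that $\delta_n\to 0$ forces $u_n\to 0$ in $L^q(\mathds{R}^{N-1})$ for the exponent $q\in(\max\{2,p\},2_\ast)$ of Theorem~\ref{Problem1-Existence}, which contradicts the lower bound $\int_{\mathds{R}^{N-1}}|u_n|^q\,\mathrm{d}x^\prime\ge C_0>0$ of Lemma~\ref{limitacao inferior}.

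To implement this I would fix a countable family $\{y_i\}\subset\mathds{R}^{N-1}$ — for instance a sufficiently fine lattice — such that $\{\Gamma_1(y_i)\}$ covers $\mathds{R}^{N-1}$ while both $\{\Gamma_1(y_i)\}$ and $\{B_1^+(y_i)\}$ have uniformly bounded overlap (each point of $\mathds{R}^{N-1}$, respectively of $\mathds{R}^N_+$, lies in at most $K=K(N)$ of them). On each half-ball Lemma~\ref{key lemma} with $r=2_\ast$ gives $\|u_n\|_{2_\ast,\Gamma_1(y_i)}\le C\,\|u_n\|_{B^+_1,\Gamma,y_i}$, where $\|u_n\|_{B^+_1,\Gamma,y_i}^2=\|\rho(x_N)^{1/2}\nabla u_n\|_{2,B_1^+(y_i)}^2+\|u_n\|_{2,\Gamma_1(y_i)}^2$. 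Interpolating $L^q(\Gamma_1(y_i))$ between $L^2$ and $L^{2_\ast}$, with $\tfrac1q=\tfrac{1-\theta}{2}+\tfrac{\theta}{2_\ast}$ and $\theta\in(0,1)$, and raising to the power $q$,
\[
\|u_n\|_{q,\Gamma_1(y_i)}^q\le \|u_n\|_{2,\Gamma_1(y_i)}^{q(1-\theta)}\,\|u_n\|_{2_\ast,\Gamma_1(y_i)}^{q\theta}\le C\,\|u_n\|_{2,\Gamma_1(y_i)}^{q(1-\theta)}\,\|u_n\|_{B^+_1,\Gamma,y_i}^{q\theta}.
\]

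Writing $A_i:=\|u_n\|_{2,\Gamma_1(y_i)}^2$ and $B_i:=\|u_n\|_{B^+_1,\Gamma,y_i}^2$, one has $0\le A_i\le B_i\le \|u_n\|^2+\|u_n\|_{2,\mathds{R}^{N-1}}^2\le\widetilde M^2$ uniformly in $n$ and $i$. The elementary inequality $A^{q(1-\theta)/2}B^{q\theta/2}\le C(\widetilde M)\,A^{\kappa}B$, valid for $0\le A\le B\le\widetilde M^2$ with $\kappa:=\min\{\tfrac{q(1-\theta)}{2},\tfrac{q-2}{2}\}>0$ (proved by the short case distinction $q\theta\le 2$ versus $q\theta>2$, using $A\le B$ in the first case and $B\le\widetilde M^2$ in the second), then gives $\|u_n\|_{q,\Gamma_1(y_i)}^q\le C\,A_i^{\kappa}B_i\le C\,\delta_n^{\kappa}\,B_i$. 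Summing over $i$ and invoking the bounded overlap,
\[
\int_{\mathds{R}^{N-1}}|u_n|^q\,\mathrm{d}x^\prime\le\sum_i\|u_n\|_{q,\Gamma_1(y_i)}^q\le C\,\delta_n^{\kappa}\sum_i B_i\le C\,K\,\delta_n^{\kappa}\big(\|u_n\|^2+\|u_n\|_{2,\mathds{R}^{N-1}}^2\big)\longrightarrow 0,
\]
contradicting Lemma~\ref{limitacao inferior}, so \eqref{sup inequality} holds.

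The conceptual core — a covering plus interpolation argument combined with the local trace estimate of Lemma~\ref{key lemma} — is standard. The only slightly delicate point I expect is the exponent bookkeeping after the interpolation: since $q\theta$ may be smaller or larger than $2$, the small factor $\delta_n^{\kappa}$ must be extracted while leaving behind the quantity $B_i$, which is summable over the cover precisely because $\|u_n\|$ and $\|u_n\|_{2,\mathds{R}^{N-1}}$ are bounded; this is handled by the uniform bound $\widetilde M$ and the elementary inequality above. Everything else is routine.
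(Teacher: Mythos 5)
Your proof is correct, and its conceptual skeleton matches the paper's: a covering of $\mathds{R}^{N-1}$ by unit balls with bounded overlap, interpolation of the $L^q(\Gamma_1(y))$ norm between $L^2$ and a higher exponent controlled via Lemma~\ref{key lemma}, summation over the cover, and the lower bound of Lemma~\ref{limitacao inferior} to close the argument.

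The genuine difference is in how the exponent bookkeeping is resolved. The paper interpolates between $L^2$ and $L^r$ for a chosen $r\in(q,2_\ast)$, which produces $\|u\|_{B_1^+,\Gamma,y}^{\alpha q}$ with $\alpha q$ possibly less than $2$; to repair this it splits into two cases depending on the sign of $q-r_\ast$ with $r_\ast=4(r-1)/r$, the awkward case requiring a second interpolation through $L^{r_\ast}$ and a reuse of the first case. You instead interpolate directly with the endpoint $r=2_\ast$ and dispose of both signs at once with the elementary inequality $A^{q(1-\theta)/2}B^{q\theta/2}\le C\,A^{\kappa}B$ for $0\le A\le B\le\widetilde M^2$, proved by $B^{q\theta/2-1}\le A^{q\theta/2-1}$ when $q\theta\le 2$ (here the essential observation $A_i\le B_i$ is used) and by $B^{q\theta/2-1}\le\widetilde M^{q\theta-2}$ when $q\theta>2$. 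This removes the case split entirely and makes the extraction of the vanishing factor $\delta_n^{\kappa}$, with $\kappa=\min\{q(1-\theta)/2,(q-2)/2\}>0$, transparent. What the paper's version buys in return is nothing essential here — your single-case treatment is both shorter and easier to audit. Your verification that $A_i\le B_i\le\widetilde M^2$ uniformly (from boundedness of $(u_n)$ and Lemma~\ref{embedding}), and the separate bounded-overlap properties of $\{\Gamma_1(y_i)\}$ and $\{B_1^+(y_i)\}$ needed to sum $B_i$, are the only places one must be careful, and you address them correctly.
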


\begin{proof}Let $r\in (q,2_\ast)$. By interpolation inequality, we have
\begin{align*}
    \|u\|_{q, \Gamma_1(y)}^q \leq \|u\|_{2, \Gamma_1(y)}^{(1-\alpha)q}\|u\|_{r, \Gamma_1(y)}^{\alpha q},
\end{align*}
with $\alpha= \frac{r}{q}\frac{q-2}{r-2}$. First, consider the case where $r_\ast:=4(r-1)/r\leq q$. This condition is equivalent to $\alpha q\geq 2$. By Lemma \ref{key lemma}, we obtain
\begin{align*}
     \|u\|_{q, \Gamma_1(y)}^q &\leq C \|u\|_{2, \Gamma_1(y)}^{(1-\alpha)q}\|u\|_{B_1^+,\Gamma,y}^{\alpha q}\\
     &\leq C \left(\sup_{y \in \mathds{R}^{N-1}}\int_{ \Gamma_1(y)}u^2\mathrm{d}x^\prime\right)^{(1-\alpha)q/2}\|u\|_{B_1^+,\Gamma,y}^{\alpha q-2}\|u\|_{B_1^+,\Gamma,y}^2.
\end{align*}
By \eqref{Hardy} and $(\rho_0)$ we obtain
\begin{align*}
    \|u\|_{B_1^+,\Gamma, y}^2&= \int_{B_1^+(y)}\rho(x_N)|\nabla u|^2\mathrm{d}x+\int_{ \Gamma_1(y)}u^2\mathrm{d}x^\prime\\
    &\leq \int_{\mathds{R}^N_+}\rho(x_N)|\nabla u|^2\mathrm{d}x+\int_{\mathds{R}^{N-1}}u^2\mathrm{d}x^\prime\\
    &\leq C \|u\|  ^2.
\end{align*}
Thus, since $\alpha q\geq 2$
\begin{equation}\label{key inequality 2}
    \|u\|_{q, \Gamma_1(y)}^q \leq C \left(\sup_{y \in \mathds{R}^{N-1}}\int_{ \Gamma_1(y)}u^2\mathrm{d}x^\prime\right)^{(1-\alpha)q/2} \|u\|  ^{\alpha q-2}\|u\|_{B_1^+,\Gamma,y}^2.
\end{equation}
Now, consider a family $\{B_1^+(y)\}_{y \in \mathds{R}^{N-1}}$ such that the collection $\{ \Gamma_1(y)\}_{y \in \mathds{R}^{N-1}}$ covers $\mathds{R}^{N-1}$ and ensures that each point in $\mathds{R}^{N-1}$ is contained in at most $N$ of these balls. By summing the inequalities in \eqref{key inequality 2} over this family, we find
\begin{equation}\label{key inequality 3}
     \int_{\mathds{R}^{N-1}}|u|^q\mathrm{d}x^\prime \leq CN \left(\sup_{y \in \mathds{R}^{N-1}}\int_{ \Gamma_1(y)}u^2\mathrm{d}x^\prime\right)^{(1-\alpha)q/2} \|u\|  ^{\alpha q}.
\end{equation}
Applying this inequality with $u=u_n$ and using the fact that $(u_n)$ is bounded, we have
\begin{equation*}
     \int_{\mathds{R}^{N-1}}|u_n|^q\mathrm{d}x^\prime \leq C \left(\sup_{y \in \mathds{R}^{N-1}}\int_{ \Gamma_1(y)}u_n^2\mathrm{d}x^\prime\right)^{(1-\alpha)q/2}.
\end{equation*}
Then \eqref{sup inequality} follows by Lemma \ref{limitacao inferior}.

Now consider the case $r_\ast=4(r-1)/r > q$. In this case, we have $q<r_\ast<r$ and
\begin{equation*}
    \|u\|_{q,\mathds{R}^{N-1}}^q\leq \|u\|_{2,\mathds{R}^{N-1}}^{(1-\alpha)q}\|u\|_{r_\ast,\mathds{R}^{N-1}}^{\alpha q},
\end{equation*}
with $\alpha=\frac{r}{r_\ast}\frac{r_\ast-2}{r-2}$. Since $\alpha r_\ast=2$ , taking $q=r_\ast$ in \eqref{key inequality 3}, we have 
\begin{equation*}
      \int_{\mathds{R}^{N-1}}|u|^{r_\ast}\mathrm{d}x^\prime \leq CN \left(\sup_{y \in \mathds{R}^{N-1}}\int_{ \Gamma_1(y)}u^2\mathrm{d}x^\prime\right)^{(1-\alpha)r_\ast/2} \|u\|  ^{2}.
\end{equation*}
Thus,
\begin{equation*}
    \int_{\mathds{R}^{N-1}}|u|^q\mathrm{d}x^\prime\leq CN^{\theta q/r_\ast}\|u\|_{2,\mathds{R}^{N-1}}^{(1-\alpha)q} \left[\left(\sup_{y \in \mathds{R}^{N-1}}\int_{ \Gamma_1(y)}u^2\mathrm{d}x^\prime\right)^{(1-\alpha)r_\ast/2} \|u\| ^2\right]^{\theta q/r_\ast}.
\end{equation*}
Therefore, taking $u=u_n$, using Lemma \ref{embedding}, \eqref{desigualdade rho} and the fact that $(u_n)$ is bounded, we obtain \eqref{sup inequality}.
\end{proof}

We are now ready to prove Theorem~\ref{Problem1-Existence}

\begin{proof}[Proof of Theorem \ref{Problem1-Existence}] By Lemma \ref{sup}, there exists a sequence $(y_n)\subset \mathds{R}^{N-1}$ such that
\begin{equation*}
0<C\leq\int_{\Gamma_1(y_n)}u_n^2\mathrm{d}x^\prime.
\end{equation*}
Defining $v_n(x^\prime,x_N)=u_n(x^\prime+y_n,x_N)$, we see that 
\begin{equation}\label{v inequality}
\int_{\Gamma(0)}v_n^2\mathrm{d}x^\prime=\int_{\Gamma_1(y_n)}u_n^2 \mathrm{d}x^\prime\geq C>0.
\end{equation}
Since $(u_n)$ is such that $I(u_n)\longrightarrow c$ and $I'(u_n)\longrightarrow 0$, taking into account that  
\[
\int_{\mathds{R}^N_+}\rho(x_N)|\nabla v_n|^2\mathrm{d}x=\int_{\mathds{R}^N_+}\rho(x_N)|\nabla u_n|^2\mathrm{d}x,\quad \int_{\mathds{R}^N_+}|v_n|^p\mathrm{d}x= \int_{\mathds{R}^N_+}|u_n|^p\mathrm{d}x,
\]
and
\[
\int_{\mathds{R}^{N-1}}|v_n|^q\mathrm{d}x^\prime= \int_{\mathds{R}^{N-1}}|u_n|^q\mathrm{d}x^\prime,
\]
we see that  $I(v_n)=I(u_n)\longrightarrow c$. Furthermore,
\[
\sup_{\|\varphi\|_{E_p(\mathds{R}^N_+)} =1 }|I'(v_n)\varphi|=\sup_{\|\varphi\|_{E_p(\mathds{R}^N_+)} =1}|I'(u_n)\varphi(x-y_n)|\leq \sup_{\|\varphi\|_{E_p(\mathds{R}^N_+)} =1}\|I'(u_n)\|\|\varphi\|\longrightarrow 0.
\]
Hence, $(v_n)$ also is a Palais–Smale sequence at level $c$ and so $(v_n)$ is bounded. Passing if necessary to a subsequence, we can assume that $v_n\rightharpoonup v$ in $E_p(\mathds{R}^N_+)$ with $v\geq 0$, due to \eqref{nonnegative Ps seq}. Given that, $I'(v_n)\longrightarrow 0$, we can pass to the limit and obtain $I'(v)\varphi =0$ for all $\varphi \in C^\infty_\delta(\mathds{R}^N_+)$. Now, we observe that $v_n\longrightarrow v$ in $L^2_\mathrm{loc}(\mathds{R}^{N-1})$, then by \eqref{v inequality} we have
\[
\int_{\Gamma(0)}v^2\mathrm{d}x^\prime\geq C>0.
\]
Thus, $v$ is a nontrivial nonnegative weak solution of \eqref{Coro1}. By Theorem \ref{Holder regularity result}, $u\in C^{2,\alpha}_{\mathrm{loc}}(\overline{\mathds{R}^N_+})$ and $u > 0$ in $\overline{\mathds{R}^N_+}$, which completes the proof of Theorem.
\end{proof}

\subsection{Proof of Theorem~\ref{Problem2-Existence} and \ref{Existence 3}}

In this section, we seek solutions for \eqref{PG} in the space $\mathcal{R}^{1,2}_\rho(\mathds{R}^N_+)$.

\begin{proof}[Proof of Theorem \ref{Problem2-Existence}:] 
 For each case of Theorem, we can argue as in Lemmas \ref{mp geometry} and \ref{limitacao inferior}, and conclude that the functional $I$ satisfies the mountain pass geometry and that every Palais–Smale sequence is bounded. Following the reasoning in Remark \ref{nonnegative Ps seq}, there exists a sequence $(u_n)$ such that
 \begin{equation*}
     u_n\geq0,\quad I(u_n)\longrightarrow c>0,\quad\text{and}\quad I^\prime(u_n)\longrightarrow 0,
 \end{equation*}
 where $c=\inf_{g \in \Lambda } \max_{t \in [0,1]}I(g(t))$ and $\Lambda=\{g \in C([0,1], \mathcal{R}^{1,2}_{\rho}(\mathds{R}^N_+)): g(0)=0~~\text{and}~~I(g(1))<0\}$. Given that $(u_n)$ is bounded, there exists $u\in \mathcal{R}^{1,2}_\rho(\mathds{R}^N_+)$ such that, up to a subsequence, $u_n\rightharpoonup u\geq 0$. Thus, $I^\prime(u_n)\varphi\longrightarrow I^\prime(u)\varphi$, for all $\varphi\in C^\infty_\delta(\mathds{R}^N_+)$, and $u$ is a weak solution. By Lemma \ref{Compact embedding} and since $\langle I^\prime(u_n)-I^\prime(u), u_n-u \rangle=o(1)$, we obtain $u_n\longrightarrow u$ in $\mathcal{D}^{1,2}_\rho(\mathds{R}^N_+)$. Therefore, $I(u_n)\longrightarrow I(u)=c>0$ and $u\neq 0$. Finally, $u\in C^{2,\alpha}_{\mathrm{loc}}(\mathds{R}^N_+)$ and is positive in $\mathds{R}^N_+$, as ensured by Theorem \ref{Holder regularity result}. 
\end{proof}

\begin{proof}[Proof of Theorem \ref{Existence 3}:] Considering the space $\mathcal{R}^{1,2}_\rho(\mathds{R}^N_+)\cap E_q(\mathds{R}^{N-1})$ with the norm $\|\cdot\|+|b|\|\cdot\|_{q,\mathds{R}^{N-1}}$, we can argue as in the previous theorems to prove that $I$ satisfies the mountain pass geometry and that there exists a nonnegative Palais-Smale sequence $(u_n)\subset \mathcal{R}^{1,2}_\rho(\mathds{R}^N_+)\cap E_q(\mathds{R}^{N-1})$ at level $c>0$ and such that $\|u_n\|+|b|\|u_n\|_{q,\mathds{R}^{N-1}}\leq C$. Then, there exists a subsequence such that $u_n \rightharpoonup u\geq 0$ in $\mathcal{R}^{1,2}_\rho(\mathds{R}^N_+)\cap E_q(\mathds{R}^{N-1})$ and in $L^q(\mathds{R}^{N-1})$. Thus $I^\prime(u_n)\varphi\longrightarrow I^\prime(u)\varphi$, for all $\varphi\in C^\infty_\delta(\mathds{R}^N_+)$ and then $u$ is a critical point for $I$. Using a density argument, we test the weak formulation with $u$, obtaining
\begin{equation*}
    \int_{\mathds{R}^N_+}\rho(x_N)|\nabla u|^2\mathrm{d}x+|b|\int_{\mathds{R}^{N-1}}|u|^q\mathrm{d}x^\prime=a\int_{\mathds{R}^N_+}|u|^p\mathrm{d}x.
\end{equation*}
Given that $I^\prime(u_n)u_n=o(1)$, using the compactness in $L^p(\mathds{R}^N_+)$ provided by Lemma~\ref{Compact embedding}, we obtain
\begin{align*}
\int_{\mathds{R}^N_+}\rho(x_N)|\nabla u_n|^2\mathrm{d}x+|b|\int_{\mathds{R}^{N-1}}|u_n|^q\mathrm{d}x^\prime=&a\int_{\mathds{R}^N_+}|u_n|^p\mathrm{d}x+o(1)\\
    =& \int_{\mathds{R}^N_+}\rho(x_N)|\nabla u|^2\mathrm{d}x+|b|\int_{\mathds{R}^{N-1}}|u|^q\mathrm{d}x^\prime+o(1).
\end{align*}
Let $l=\lim_{n\rightarrow \infty} \|u_n\|^2$. By weak convergence, we have $\|u\|^2\leq \liminf_{n\rightarrow \infty}\|u_n\|^2=l$. On the other hand,
\begin{align*}
    l+\|u\|_{q,\mathds{R}^{N-1}}^q\leq& \liminf_{n\rightarrow \infty} \|u_n\|^2+\liminf_{n\rightarrow \infty}\|u_n\|_{q,\mathds{R}^{N-1}}^q\\
    \leq& \liminf_{n\rightarrow \infty}(\|u_n\|^2+\|u_n\|_{q,\mathds{R}^{N-1}}^q)\\
    =& \|u\|^2+\|u\|_{q,\mathds{R}^{N-1}}^q.
\end{align*}
Thus, $\lim_{n\rightarrow \infty} \|u_n\|^2=\|u\|^2$ and hence $\lim_{n\rightarrow \infty}\|u_n\|_{q,\mathds{R}^{N-1}}^q=\|u\|_{q,\mathds{R}^{N-1}}^q$. Therefore, $I(u_n)\longrightarrow I(u)=c>0$, which implies that $u\neq 0$. By Theorem \ref{Holder regularity result}, we have $u\in C^{2,\alpha}_{\mathrm{loc}}(\overline{\mathds{R}^N_+})$ and $u$ is positive in $\overline{\mathds{R^N_+}}$.
\end{proof}

%
%

\section{Pohozaev identity and nonexistence results}    
    
\begin{proof}[Proof of Pohozaev identity]: Let $R>0$ and $\Omega_R \subset \overline{\mathds{R}^N_+}$ be such that $B_R^+\subset\Omega_R$, $\Gamma_R\subset \partial \Omega_R$ and $\partial \Omega$ is sufficiently smooth. First, consider $u$ a sufficiently smooth function. Then, by a standard calculation, we have in $\Omega_R$:
		\begin{align*}
			{
				\rm{div}}\left((x\cdot\nabla u)\rho(x_N)\nabla u\right)&=(x\cdot\nabla u)\mathrm{div}(\rho(x_N)\nabla u)+\nabla(x\cdot\nabla u)\cdot\rho(x_N)\nabla u\\
			&=(x\cdot\nabla u)\mathrm{div}(\rho(x_N)\nabla u)+\rho(x_N)|\nabla u|^2+\frac{\rho(x_N)}{2}\left(x\cdot\nabla (|\nabla u|^2)\right)
		\end{align*}
		and
		\begin{align*}
		{\rm{div}}\left(x\rho(x_N)|\nabla u|^2\right)&=\mathrm{div}(x\rho(x_N))|\nabla u|^2+\rho(x_N)x\cdot\nabla (|\nabla u|^2\\
        &=\left(N\rho(x_N)+\rho'(x_N)x_N\right)|\nabla u|^2+ \rho(x_N)x\cdot\nabla (|\nabla u|^2).
		\end{align*}
Now, let $\psi_R \in C_0^\infty(B_R)$ be restricted to $\mathds{R}^N_+$ such that  
        \begin{align*}
			\psi_{R}(x)=\left\{
			\begin{aligned}
				1&\quad&\text{if}&\quad |x|\leq R/2,\\
				0&\quad&\text{if}&\quad |x|\geq R,
			\end{aligned}
			\right.
		\end{align*}
with $|\nabla \psi_{R}(x)|\leq2/R$, and define the vector field $V_{R}: \Omega_R \longrightarrow\mathds{R}^N$ by
		\begin{equation*}
		    V_{R}(x):=\left[(x\cdot\nabla u)\rho(x_N)\nabla u - \frac{x\rho(x_N)}{2}|\nabla u|^2\right]\psi_{R}.
		\end{equation*}
From the previous computations, it follows that		\begin{align*}\label{div V}
		    -\mathrm{div}(V_R)=&-\left[(x\cdot\nabla u)\mathrm{div}(\rho(x_N)\nabla u)+\frac{2-N}{2}\rho(x_N)|\nabla u|^2-\frac{\rho'(x_N)x_N}{2}|\nabla u|^2\right]\psi_R\\
            -&\left[(x\cdot\nabla u)\rho(x_N)\nabla u - \frac{x\rho(x_N)}{2}|\nabla u|^2\right]\nabla\psi_R
		\end{align*}
in $\Omega_{R}$. Thus,
\begin{align*}
\int_{\Gamma_R}V_{R}\cdot\nu\,\mathrm{d}x^\prime&=\int_{\Omega_R}\mathrm{div}(V_R)\mathrm{d}x\\
            &=\int_{ B_R^+}\left[(x\cdot\nabla u)\mathrm{div}(\rho(x_N)\nabla u)+\frac{2-N}{2}\rho(x_N)|\nabla u|^2-\frac{\rho'(x_N)x_N}{2}|\nabla u|^2\right]\psi_R\,\mathrm{d}x\\ 
			&+\int_{B_{R}^+\setminus B_{R/2}}\left[(x\cdot\nabla u)\rho(x_N)\nabla u - \frac{x\rho(x_N)}{2}|\nabla u|^2\right]\nabla\psi_{R}\,\mathrm{d}x.
\end{align*}

Now let $u \in H^2_{\mathrm{loc}}(\overline{\mathds{R}^N_+})$ and $(u_n) \subset C^\infty(\overline{\Omega_R})$ such that $u_n \to u$ in $H^2(\overline{\Omega_R})$. Applying the above equation to $u_n$ and passing the limit yield the same for $u \in H^2_{\mathrm{loc}}(\overline{\mathds{R}^N_+})$. Indeed, 
\begin{align*}
    \left|\int_{\Gamma_R}V_R^n\cdot\nu\mathrm{d}x^\prime-\int_{\Gamma_R}V_R\cdot\nu\mathrm{d}x^\prime\right|\leq&\int_{\Gamma_R} |((x\cdot\nabla u_n)\nabla u_n-(x\cdot\nabla u)\nabla u)\cdot \nu|\mathrm{d}x^\prime\\
    =&\int_{\Gamma_R} \left|(x\cdot\nabla u)\frac{\partial u}{\partial x_N}-(x\cdot \nabla u_n)\frac{\partial u_n}{\partial x_N}\right|\mathrm{d}x^\prime\\
    \leq& C\|\nabla u\|_{2,\Gamma_R}\left\|\frac{\partial u}{\partial x_N}-\frac{\partial u_n}{\partial x_N}\right\|_{2,\Gamma_R}+\left\| \frac{\partial u_n}{\partial x_N}\right\|_{2,\Gamma_R}\|x(\nabla u_n-\nabla u)\|_{2,\Gamma_R}\\
    \leq& C_1\left\|\frac{\partial u}{\partial x_N}-\frac{\partial u_n}{\partial x_N}\right\|_{H^1(\Omega_R)}+C_2\|\nabla u_n-\nabla u\|_{H^1(\Omega_R)}\\
    \longrightarrow& 0,
\end{align*}
\begin{align*}
    \int_{B_R^+}(x\cdot\nabla u_n)\mathrm{div}(\rho(x_N)\nabla u_n)\psi_R\mathrm{d}x=& \int_{B_R^+}(x\cdot\nabla u_n)\left(\rho(x_N)\Delta u_n-\rho^\prime(x_N)\frac{\partial u_n}{\partial x_N}\right)\psi_R\mathrm{d}x\\
    \longrightarrow& \int_{B_R^+}(x\cdot\nabla u)\left(\rho(x_N)\Delta u-\rho^\prime(x_N)\frac{\partial u}{\partial x_N}\right)\psi_R\mathrm{d}x\\
    =& \int_{B_R^+}(x\cdot\nabla u)\mathrm{div}(\rho(x_N)\nabla u)\psi_R\mathrm{d}x,
\end{align*}
and clearly the other terms also converges for the desired limit.

Since $|\nabla \psi_{R}(x)|\leq2/R$ and $\rho(x_N)|\nabla u|^2\in L^1(\mathds{R}^N_+)$, we have   
		\begin{align*}
			\left|\int_{B_{R}^+\setminus B_{R/2}}\left[(x\cdot\nabla u)\rho(x_N)\nabla u - \frac{1}{2}x\rho(x_N)|\nabla u|^2\right]\nabla\psi_{R}\,\mathrm{d}x\right|          
			&\leq 3\int_{B_{R}^+\setminus B_{R/2}}\rho(x_N)|\nabla u|^2\,\mathrm{d}x\longrightarrow 0,
		\end{align*}
		as $R\longrightarrow+\infty$.
Moreover, since
\begin{equation*}
    \int_{B_R^+}\rho(x_N)\nabla u\nabla \varphi\mathrm{d}x=\int_{B_R^+}f(u)\varphi\mathrm{d}x,\quad \forall \varphi \in C^{\infty}_0(B_R^+),
\end{equation*}
and $u \in H^2(B_R^+)$, integration by parts yields $-\mathrm{div}(\rho(x_N)\nabla u)=f(u)$ a.e. in $B_R^+$. Hence, we conclude        
\begin{equation*}
\int_{\Gamma_R}V_{R}\cdot\nu\,\mathrm{d}x^\prime =-\int_{ B_R^+}\left[f(u)(x\cdot\nabla u)+\frac{N-2}{2}\rho(x_N)|\nabla u|^2+\frac{\rho'(x_N)x_N}{2} |\nabla u|^2\right]\psi_{R}\,\mathrm{d}x+ o_R(1).    
\end{equation*}        
        
        Let $(R_k)_{k\in\mathds{N}}$  be a sequence such that $R_k\longrightarrow+\infty$ as $k\longrightarrow+\infty$ and denote $\psi_k:=\psi_{_{R_k}}$ and $V_{k}:=V_{R_k}$. Consequently, using the fact that $u$ is a solution,  the definition of $V_R$, $\rho(0)=1$ and $x\cdot \nu=0$ in $\Gamma_{R_k}$, we have
		\begin{align}\label{limite}
			\int_{\Gamma_{R_k}}g(u)(x\cdot\nabla u) \psi_k\,\mathrm{d}x^\prime=&\int_{\Gamma_{R_k}}V_{k} \cdot \nu \mathrm{d}x^\prime +\int_{\Gamma_{R_k}} \frac{1}{2}|\nabla u|^2(x\cdot \nu)\psi_k \mathrm{d}x^\prime \nonumber\\
            =& \int_{\Gamma_{R_k}}V_{k} \cdot \nu \mathrm{d}x^\prime\nonumber\\
            =&-\int_{ B_{R_k}^+}\left[f(u)(x\cdot\nabla u)+\frac{N-2}{2}\rho(x_N)|\nabla u|^2+\frac{\rho'(x_N)x_N}{2}|\nabla u|^2\right]\psi_{R}\,\mathrm{d}x\nonumber\\
            +& o_{R_k}(1).
		\end{align}
		We now claim that
        \begin{equation}\label{f limit}
            \int_{B_{R_k}^+}f(u)(x\cdot \nabla u)\psi_R\mathrm{d}x\longrightarrow -N\int_{\mathds{R}^N_+}F(u)\mathrm{d}x,\quad \text{as}\quad k\longrightarrow\infty.
        \end{equation}
        Indeed, integrating by parts, we obtain
		\begin{align*}
			\int_{B_{R_k}^+}f(u)(x\cdot\nabla u)\psi_k\,\mathrm{d}x	&=\sum_{i=1}^N\int_{B_{R_k}^+}\left(F(u)\right)_{x_i}x_i\psi_k\,\mathrm{d}x\\
			&=\int_{B_{R_k}^+}\left[-NF(u)\psi_k-F(u)(x\cdot\nabla\psi_k)\right]\,\mathrm{d}x.
		\end{align*}
		By the Dominated Convergence Theorem,
		$$
\int_{B_{R_k}^+}F(u)\psi_k\,\mathrm{d}x\longrightarrow\int_{\mathds{R}^N_+}F(u)\,\mathrm{d}x, \quad \text{as}\quad k\longrightarrow\infty.
		$$
Furthermore
		\begin{align*}
		\left|\int_{B_{R_k}^+}F(u)(x\cdot\nabla\psi_k)\,\mathrm{d}x\right|&\leq \int_{B_{R_k}^+\backslash B_{R_k/2}}|F(u)||x||\nabla\psi_k|\,\mathrm{d}x\\&\leq 2\int_{B_{R_k}^+\backslash B_{R_k/2}}|F(u)|\,\mathrm{d}x\longrightarrow0,\quad \text{as}\quad k\longrightarrow\infty,   
		\end{align*}
since $F(u)\in L^1(\mathds{R}^N_+)$, which concludes \eqref{f limit}. Now, observe that 
		$$
		\begin{aligned}
		\int_{\Gamma_{R_k}}g(u)(x\cdot\nabla u) \psi_k\,\mathrm{d}x^\prime&=\sum_{i=1}^{N-1}\int_{\Gamma_{R_k}}\left(G(u)\right)_{x_i}x_i\psi_k\,\mathrm{d}x^\prime\\
		&=\int_{\Gamma_{R_k}}\left[-(N-1)G(u)\psi_k-G(u)\left(x^\prime\cdot
		\nabla_{x^\prime}\psi_k\right)\right]\,\mathrm{d}x^\prime.
		\end{aligned}
		$$
		By the same reasoning, we obtain
		$$
\int_{\Gamma_{R_k}}G(u)\psi_k\,\mathrm{d}x^\prime\longrightarrow \int_{\mathds{R}^{N-1}}G(u)\,\mathrm{d}x^\prime\quad\mbox{and}\quad \int_{\Gamma_{R_k}}G(u)\left(x^\prime\cdot
		\nabla_{x^\prime}\psi_k\right)\,\mathrm{d}x^\prime\longrightarrow 0.
		$$
Hence,
\begin{equation}\label{g limit}
    \int_{\Gamma_{R_k}}g(u)(x\cdot\nabla u) \psi_k\,\mathrm{d}x^\prime \longrightarrow -(N-1)\int_{\mathds{R}^{N-1}}G(u)\mathrm{d}x^\prime.
\end{equation}
		Furthermore, by the dominated convergence Theorem,
		\begin{equation}\label{rho limit}
		\int_{B_{R_k}^+}\rho(x_N)|\nabla u|^2\psi_k\,\mathrm{d}x\longrightarrow\int_{\mathds{R}^N_+}\rho(x_N)|\nabla u|^2\,\mathrm{d}x.
		\end{equation}
        Finally, by $(\rho_1)$ and the dominated convergence theorem, we get
        
        \begin{equation}\label{rho' limit}
            \int_{B_{R_k}^+}\rho^\prime(x_N)x_N|\nabla u|^2\psi_k\,\mathrm{d}x\longrightarrow\int_{\mathds{R}^N_+}\rho^\prime(x_N)x_N|\nabla u|^2\,\mathrm{d}x.
        \end{equation}
Then, passing to the limit in \eqref{limite} and using \eqref{f limit}, \eqref{g limit}, \eqref{rho limit} and \eqref{rho' limit}, we obtain
\begin{equation*}
    -(N-1)\int_{\mathds{R}^{N-1}}G(u)\mathrm{d}x^\prime=N\int_{\mathds{R}^N_+}F(u)\mathrm{d}x-\frac{N-2}{2}\int_{\mathds{R}^N_+}\rho(x_N)|\nabla u|^2\mathrm{d}x-\frac{1}{2}\int_{\mathds{R}^N_+}\rho'(x_N)x_N |\nabla u|^2\mathrm{d}x.
\end{equation*}
Therefore,
\begin{equation*}
    \frac{N-2}{2}\int_{\mathds{R}_{+}^{N}}\rho(x_N)|\nabla u|^2\mathrm{d}x+\frac{1}{2}\int_{\mathds{R}_{+}^{N}}\rho'(x_N)x_N|\nabla u|^2\mathrm{d}x=N\int_{\mathds{R}^N_+}F(u)\mathrm{d}x+(N-1)\int_{\mathds{R}^{N-1}}G(u)\mathrm{d}x^\prime.
\end{equation*}

\end{proof}

Now, we are able to prove the nonexistence results.

\begin{proof}[Proof of Theorem \ref{Nonexistence1}]: For each item of the theorem, given $u$ be a nonnegative weak solution for \eqref{PG}, we have $u^q\in L^1(\mathds{R}^{N-1})$ and $u^p\in L^1(\mathds{R}^N_+)$. Moreover, by Theorem \ref{H2_loc regularity theorem}, we have $u\in H^2_{\mathrm{loc}}(\overline{\mathds{R}^N_+})$, so the Pohozaev identity in Theorem \ref{Pohozaev} applies and yields:
\begin{align*}
     \frac{N-2}{2}\int_{\mathds{R}_{+}^{N}}\rho(x_N)|\nabla u|^2\mathrm{d}x+\frac{1}{2}\int_{\mathds{R}_{+}^{N}}\rho'(x_N)x_N|\nabla u|^2\mathrm{d}x=&\frac{aN}{p}\int_{\mathds{R}^N_+}|u|^p\mathrm{d}x\\
     +&\frac{b(N-1)}{q}\int_{\mathds{R}^{N-1}}|u|^q\mathrm{d}x^\prime.
\end{align*}
Furthermore, taking $u$ as a testing function, we obtain
\begin{equation*}
    \int_{\mathds{R}^N_+}\rho(x_N)|\nabla u|^2\mathrm{d}x=a\int_{\mathds{R}^N_+}|u|^p\mathrm{d}x+b\int_{\mathds{R}^{N-1}}|u|^q\mathrm{d}x^\prime.
\end{equation*}
By combining these two equations, we get the following relation:
\begin{align*}
    \frac{1}{2}\int_{\mathds{R}^N_+}\rho'(x_N)x_N|\nabla u|^2\mathrm{d}x&=a\left(\frac{N}{p}-\frac{N-2}{2}\right)\int_{\mathds{R}^{N}_+}|u|^p\mathrm{d}x\\
    &+b\left(\frac{N-1}{q}-\frac{N-2}{2}\right)\int_{\mathds{R}^{N-1}}|u|^q\mathrm{d}x^\prime.
\end{align*}
Since $\rho^\prime(x_N)x_N>0$ by $(\rho_1)$, the left-hand side of the equation is strictly positive unless $u\equiv 0$. Under conditions of the itens $(i)$ or $(ii)$,  the right-hand side becomes nonpositive. Therefore, the only possibility is $u\equiv 0$, which concludes the proof.
\end{proof}

\begin{proof}[Proof of Theorem \ref{Nonexistence2}]: Lets prove the item $(iii)$ and the same analysis can be applied to the other cases. If $u \in \mathcal{D}^{1,2}_\rho(\mathds{R}^N_+)$ and $(\rho_0)$ holds with $\gamma>0$, we have $u^{2^\ast}\in L^1(\mathds{R}^N_+)$ by the classical Sobolev inequality, and $u^{2_\ast}\in L^1(\mathds{R}^{N-1})$ by \eqref{ineq}. Then, since $u\in H^2_{\mathrm{loc}}(\overline{\mathds{R}^N_+})$, we can argue as in the previous theorem and use the fact that $p=2^\ast$ and $q=2_\ast$ to obtain
\begin{align*}
    \frac{1}{2}\int_{\mathds{R}^N_+}\rho'(x_N)x_N|\nabla u|^2\mathrm{d}x&=0.
\end{align*}
Given that $\rho^\prime(x_N)x_N>0$ by $(\rho_1)$, it remains that $u\equiv 0$, which concludes the proof.    
\end{proof}

\bigskip


\subsection*{Statements and declarations}

\begin{flushleft}
	{\bf Funding:}  This work was supported partially by CAPES MATH AMSUD grant 88887.878894/2023-00.
	J. M. do \'O acknowledges partial support from CNPq through grants 312340/2021-4, 409764/2023-0, 443594/2023-6,  and E. Medeiros acknowledges partial support from CNPq through grant 310885/2023-0
	and Para\'iba State Research Foundation (FAPESQ), grant no 3034/2021.  \\
	{\bf Ethical Approval:}  Not applicable.\\
	{\bf Competing interests:}  Not applicable. \\
	{\bf Authors' contributions:}    All authors contributed to the study conception and design. All authors performed material preparation, data collection, and analysis. The authors read and approved the final manuscript.\\
	{\bf Availability of data and material:}  Not applicable.\\
	{\bf Ethical Approval:}  All data generated or analyzed during this study are included in this article.\\
	{\bf Consent to participate:}  All authors consent to participate in this work.\\
	{\bf Conflict of interest:} The authors declare no conflict of interest. \\
	{\bf Consent for publication:}  All authors consent for publication. \\
\end{flushleft}

\bigskip
	

\end{document}